\documentclass[12pt]{article}
\usepackage{a4wide}
\usepackage{amsmath,amssymb,amsthm}
\usepackage[mathscr]{eucal}
\usepackage{tikz}
\usetikzlibrary{positioning,automata}
\usetikzlibrary{arrows,calc}
\usetikzlibrary{decorations.markings,plotmarks}
\usepackage{graphics}
\usepackage{color}
\usepackage{leftidx}

\definecolor{red}{rgb}{1,0.00,0.00}


\author{Ievgen Bondarenko and Bohdan Kivva}

\title{\textbf{Automaton groups and complete square complexes}}

\sloppy \frenchspacing

\newcommand{\VH}{\mathcal{VH}}
\newcommand{\dual}{\partial}
\newcommand{\inv}{\textit{i}}

\newcommand{\St}{St}

\newtheorem{theorem}{Theorem}
\newtheorem{proposition}[theorem]{Proposition}
\newtheorem{corollary}[theorem]{Corollary}
\newtheorem{lemma}{Lemma}
\theoremstyle{definition}
\newtheorem{definition}{Definition}
\newtheorem{example}{Example}
\newtheorem{problem}{Problem}
\newtheorem{question}{Question}
\newtheorem{conjecture}{Conjecture}
\newtheorem{remark}{Remark}

\begin{document}

\newcommand{\SqComplex}{\Delta}

\newcommand{\WangTile}[4]{%
\begin{tikzpicture}[scale=1.3,baseline=-21pt,line width=0.5pt,decoration={
    markings,
    mark=at position 0.5 with {\arrow[xshift=3.333pt]{triangle 60}}},
    ]
 \filldraw[black] (0,0) circle (1pt) (0,-1) circle (1pt) (1,0) circle (1pt) (1,-1) circle (1pt);

 \draw[postaction={decorate}] (0,-1) -- node[left,xshift=-1.5pt,yshift=0.7pt] {#1} (0,0);
 \draw[postaction={decorate}] (0,0)  -- node[above,yshift=1.5pt] {#2} (1,0);
 \draw[postaction={decorate}] (0,-1) -- node[below,yshift=-1.5pt] {#3} (1,-1);
 \draw[postaction={decorate}] (1,-1) -- node[right,xshift=1.5pt,yshift=0.7pt] {#4} (1,0);

 \draw (0.2,-0.4) -- (0.4,-0.2);
 \draw (0.2,-0.6) -- (0.6,-0.2);
 \draw (0.2,-0.8) -- (0.8,-0.2);
 \draw (0.4,-0.8) -- (0.8,-0.4);
 \draw (0.6,-0.8) -- (0.8,-0.6);

\end{tikzpicture}
}

\newcommand{\ComplexFourNRF}{%
\begin{tikzpicture}[scale=1.5,baseline=-6mm,line width=0.5pt,decoration={
    markings,
    mark=at position 0.5 with {\arrow[xshift=3.333pt]{triangle 60}}},
    ]
 \filldraw[black] (0,0) circle (1pt) (0,-1) circle (1pt) (1,0) circle (1pt) (1,-1) circle (1pt);

 \draw[postaction={decorate}] (0,-1) -- node[left,xshift=-1.5pt,yshift=0.7pt] {a} (0,0);
 \draw[postaction={decorate}] (0,0) -- node[above,yshift=1.5pt] {x} (1,0);
 \draw[postaction={decorate}] (0,-1) -- node[below,yshift=-1.5pt] {y} (1,-1);
 \draw[postaction={decorate}] (1,-1) -- node[right,xshift=1.5pt,yshift=0.7pt] {b} (1,0);

 \draw (0.2,-0.4) -- (0.4,-0.2); \draw (0.2,-0.6) -- (0.6,-0.2); \draw (0.2,-0.8) -- (0.8,-0.2); \draw
(0.4,-0.8) -- (0.8,-0.4); \draw (0.6,-0.8) -- (0.8,-0.6);
\end{tikzpicture}\qquad
\begin{tikzpicture}[scale=1.5,baseline=-6mm,line width=0.5pt,decoration={
    markings,
    mark=at position 0.5 with {\arrow[xshift=3.333pt]{triangle 60}}},
    ]
 \filldraw[black] (0,0) circle (1pt) (0,-1) circle (1pt) (1,0) circle (1pt) (1,-1) circle (1pt);

 \draw[postaction={decorate}] (0,-1) -- node[left,xshift=-1.5pt,yshift=0.7pt] {a} (0,0);
 \draw[postaction={decorate}] (0,0) -- node[above,yshift=1.5pt] {y} (1,0);
 \draw[postaction={decorate}] (1,-1) -- node[below,yshift=-1.5pt] {y} (0,-1);
 \draw[postaction={decorate}] (1,-1) -- node[right,xshift=1.5pt,yshift=0.7pt] {b} (1,0);

 \draw (0.2,-0.4) -- (0.4,-0.2);
 \draw (0.2,-0.6) -- (0.6,-0.2);
 \draw (0.2,-0.8) -- (0.8,-0.2);
 \draw (0.4,-0.8) -- (0.8,-0.4);
 \draw (0.6,-0.8) -- (0.8,-0.6);
\end{tikzpicture}\vspace{0.3cm}

\begin{tikzpicture}[scale=1.5,baseline=-6mm,line width=0.5pt,decoration={
    markings,
    mark=at position 0.5 with {\arrow[xshift=3.333pt]{triangle 60}}},
    ]
 \filldraw[black] (0,0) circle (1pt) (0,-1) circle (1pt) (1,0) circle (1pt) (1,-1) circle (1pt);

 \draw[postaction={decorate}] (0,-1) -- node[left,xshift=-1.5pt,yshift=0.7pt] {b} (0,0);
 \draw[postaction={decorate}] (0,0) -- node[above,yshift=1.5pt] {x} (1,0);
 \draw[postaction={decorate}] (0,-1) -- node[below,yshift=-1.5pt] {x} (1,-1);
 \draw[postaction={decorate}] (1,0) -- node[right,xshift=1.5pt,yshift=0.7pt] {a} (1,-1);

 \draw (0.2,-0.4) -- (0.4,-0.2);
 \draw (0.2,-0.6) -- (0.6,-0.2);
 \draw (0.2,-0.8) -- (0.8,-0.2);
 \draw (0.4,-0.8) -- (0.8,-0.4);
 \draw (0.6,-0.8) -- (0.8,-0.6);

\end{tikzpicture}\qquad
\begin{tikzpicture}[scale=1.5,baseline=-6mm,line width=0.5pt,decoration={
    markings,
    mark=at position 0.5 with {\arrow[xshift=3.333pt]{triangle 60}}},
    ]
 \filldraw[black] (0,0) circle (1pt) (0,-1) circle (1pt) (1,0) circle (1pt) (1,-1) circle (1pt);

 \draw[postaction={decorate}] (0,-1) -- node[left,xshift=-1.5pt,yshift=0.7pt] {b} (0,0);
 \draw[postaction={decorate}] (0,0) -- node[above,yshift=1.5pt] {y} (1,0);
 \draw[postaction={decorate}] (1,-1) -- node[below,yshift=-1.5pt] {x} (0,-1);
 \draw[postaction={decorate}] (1,-1) -- node[right,xshift=1.5pt,yshift=0.7pt] {a} (1,0);

 \draw (0.2,-0.4) -- (0.4,-0.2);
 \draw (0.2,-0.6) -- (0.6,-0.2);
 \draw (0.2,-0.8) -- (0.8,-0.2);
 \draw (0.4,-0.8) -- (0.8,-0.4);
 \draw (0.6,-0.8) -- (0.8,-0.6);
\end{tikzpicture}
}

\newcommand{\BMFourAutomDihedral}{%
{\begin{tikzpicture}[>=stealth,scale=0.5, shorten >=2pt,node distance=4cm,on grid,auto,thick,every initial
by arrow/.style={*->}]
   (0,0) \node[state] (a)   {$a$};
   (2,0) \node[state] (bb) [below=of a] {$b^{-1}$};
   (0,2) \node[state] (b) [right=of a] {$b$};
   (2,2) \node[state] (aa) [right=of bb] {$a^{-1}$};

   \tikzstyle{every node}=[font=\footnotesize]
    \path[->]
    (a) edge  node [below] {$x|y$, $y|y^{-1}$, $y^{-1}|x$} (b)
    (a) edge [bend right] node [left] {$x^{-1}|x^{-1}$} (bb)
    (b) edge  [bend right] node [above] {$y|x^{-1}$, $x^{-1}|y^{-1}$, $y^{-1}|y$} (a)
    (b) edge  node [left]{$x|x$} (aa)
    (bb) edge  node [right]{$x|x$} (a)
    (bb) edge [bend right] node [below] {$x^{-1}|y$, $y^{-1}|x^{-1}$, $y|y^{-1}$} (aa)
    (aa) edge [bend right] node [right] {$x^{-1}|x^{-1}$} (b)
    (aa) edge  node [above]{$x|y^{-1}$, $y|x$, $y^{-1}|y$} (bb);
\end{tikzpicture}}
}

\newcommand{\ComplexFourRF}{%
\begin{tikzpicture}[scale=1.5,baseline=-6mm,line width=0.5pt,decoration={
    markings,
    mark=at position 0.5 with {\arrow[xshift=3.333pt]{triangle 60}}},
    ]
 \filldraw[black] (0,0) circle (1pt) (0,-1) circle (1pt) (1,0) circle (1pt) (1,-1) circle (1pt);

 \draw[postaction={decorate}] (0,-1) -- node[left,xshift=-1.5pt,yshift=0.7pt] {a} (0,0);
 \draw[postaction={decorate}] (0,0) -- node[above,yshift=1.5pt] {x} (1,0);
 \draw[postaction={decorate}] (1,-1) -- node[below,yshift=-1.5pt] {x} (0,-1);
 \draw[postaction={decorate}] (1,-1) -- node[right,xshift=1.5pt,yshift=0.7pt] {b} (1,0);

 \draw (0.2,-0.4) -- (0.4,-0.2); \draw (0.2,-0.6) -- (0.6,-0.2); \draw (0.2,-0.8) -- (0.8,-0.2); \draw
(0.4,-0.8) -- (0.8,-0.4); \draw (0.6,-0.8) -- (0.8,-0.6);
\end{tikzpicture}\qquad
\begin{tikzpicture}[scale=1.5,baseline=-6mm,line width=0.5pt,decoration={
    markings,
    mark=at position 0.5 with {\arrow[xshift=3.333pt]{triangle 60}}},
    ]
 \filldraw[black] (0,0) circle (1pt) (0,-1) circle (1pt) (1,0) circle (1pt) (1,-1) circle (1pt);

 \draw[postaction={decorate}] (0,-1) -- node[left,xshift=-1.5pt,yshift=0.7pt] {a} (0,0);
 \draw[postaction={decorate}] (0,0) -- node[above,yshift=1.5pt] {y} (1,0);
 \draw[postaction={decorate}] (1,-1) -- node[below,yshift=-1.5pt] {y} (0,-1);
 \draw[postaction={decorate}] (1,0) -- node[right,xshift=1.5pt,yshift=0.7pt] {b} (1,-1);

 \draw (0.2,-0.4) -- (0.4,-0.2);
 \draw (0.2,-0.6) -- (0.6,-0.2);
 \draw (0.2,-0.8) -- (0.8,-0.2);
 \draw (0.4,-0.8) -- (0.8,-0.4);
 \draw (0.6,-0.8) -- (0.8,-0.6);
\end{tikzpicture}\vspace{0.3cm}

\begin{tikzpicture}[scale=1.5,baseline=-6mm,line width=0.5pt,decoration={
    markings,
    mark=at position 0.5 with {\arrow[xshift=3.333pt]{triangle 60}}},
    ]
 \filldraw[black] (0,0) circle (1pt) (0,-1) circle (1pt) (1,0) circle (1pt) (1,-1) circle (1pt);

 \draw[postaction={decorate}] (0,-1) -- node[left,xshift=-1.5pt,yshift=0.7pt] {a} (0,0);
 \draw[postaction={decorate}] (1,0) -- node[above,yshift=1.5pt] {y} (0,0);
 \draw[postaction={decorate}] (0,-1) -- node[below,yshift=-1.5pt] {x} (1,-1);
 \draw[postaction={decorate}] (1,0) -- node[right,xshift=1.5pt,yshift=0.7pt] {a} (1,-1);

 \draw (0.2,-0.4) -- (0.4,-0.2);
 \draw (0.2,-0.6) -- (0.6,-0.2);
 \draw (0.2,-0.8) -- (0.8,-0.2);
 \draw (0.4,-0.8) -- (0.8,-0.4);
 \draw (0.6,-0.8) -- (0.8,-0.6);

\end{tikzpicture}\qquad
\begin{tikzpicture}[scale=1.5,baseline=-6mm,line width=0.5pt,decoration={
    markings,
    mark=at position 0.5 with {\arrow[xshift=3.333pt]{triangle 60}}},
    ]
 \filldraw[black] (0,0) circle (1pt) (0,-1) circle (1pt) (1,0) circle (1pt) (1,-1) circle (1pt);

 \draw[postaction={decorate}] (0,-1) -- node[left,xshift=-1.5pt,yshift=0.7pt] {b} (0,0);
 \draw[postaction={decorate}] (0,0) -- node[above,yshift=1.5pt] {x} (1,0);
 \draw[postaction={decorate}] (0,-1) -- node[below,yshift=-1.5pt] {y} (1,-1);
 \draw[postaction={decorate}] (1,0) -- node[right,xshift=1.5pt,yshift=0.7pt] {b} (1,-1);

 \draw (0.2,-0.4) -- (0.4,-0.2);
 \draw (0.2,-0.6) -- (0.6,-0.2);
 \draw (0.2,-0.8) -- (0.8,-0.2);
 \draw (0.4,-0.8) -- (0.8,-0.4);
 \draw (0.6,-0.8) -- (0.8,-0.6);
\end{tikzpicture}
}

\newcommand{\BMFourAutomSymmetric}{%
{\begin{tikzpicture}[>=stealth,scale=0.5, shorten >=2pt,node distance=4cm,on grid,auto,thick,every initial
by arrow/.style={*->}]
   (0,0) \node[state] (x)   {$a$};
   (2,0) \node[state] (yy) [below=of x] {$b$};
   (0,2) \node[state] (y) [right=of x] {$a^{-1}$};
   (2,2) \node[state] (xx) [right=of yy] {$b^{-1}$};

   \tikzstyle{every node}=[font=\footnotesize]
    \path[->]
    (x) edge  node [above] {$y^{-1}|x$, $x^{-1}|y$} (y)
    (x) edge [bend right] node [left] {$x|x^{-1}$} (yy)
    (y) edge  [bend right] node [above] {$y|x^{-1}$, $x|y^{-1}$} (x)
    (y) edge  node [above] {$x^{-1}|x$}  (xx)
    (yy) edge  node [below] {$x^{-1}|x$} (x)
    (yy) edge [bend right] node [below] {$y^{-1}|x^{-1}$, $x|y$} (xx)
    (xx) edge [bend right] node [right] {$x|x^{-1}$} (y)
    (xx) edge  node [below] {$x^{-1}|y^{-1}$, $y|x$} (yy)
    (xx) edge [bend right] node [below, sloped] {$y^{-1}|y$} (x)
    (x) edge [bend right] node [above, sloped] {$y|y^{-1}$} (xx)
    (y) edge  [bend right] node [above, sloped] {$y^{-1}|y$} (yy)
    (yy) edge [bend right] node [below, sloped] {$y|y^{-1}$} (y);
\end{tikzpicture}}
}

\maketitle

\begin{abstract}
The first example of a non-residually finite group in the classes of finitely presented small-cancelation groups, automatic groups, and CAT(0) groups was constructed by Wise as the fundamental group of a complete square complex (CSC for short) with twelve squares.
At the same time, Janzen and Wise proved that CSCs with at most three squares, five or seven squares have residually finite fundamental group. The smallest open cases were CSCs with four squares and directed complete $\VH$ complexes with six squares. We prove that the CSC with four squares studied by Janzen and Wise has a non-residually finite fundamental group. In particular, this gives a non-residually finite CAT(0) group isometric to $F_2\times F_2$. For the class of complete directed $\VH$ complexes, we prove that there are exactly two complexes with six squares having a non-residually finite fundamental group. In particular, this positively answers to a question of
Wise on whether the main example from his PhD thesis is non-residually finite. As a by-product, we get finitely presented torsion-free simple groups which decompose into an amalgamated free product of free groups $F_7*_{F_{49}}F_7$.

Our approach relies on the connection between square complexes and automata discovered by Glasner and Mozes, where complete $\VH$ complexes with one vertex correspond to bireversible automata. We prove that the square complex associated to a bireversible automaton with two states or over the binary alphabet generating an infinite automaton group has a non-residually finite fundamental group. We describe automaton groups associated to CSCs with four squares and get two simple automaton representations of the free group $F_2$ and the first automaton representation of the free product $C_3*C_3$.

\vspace{0.2cm}\textit{2010 Mathematics Subject Classification}: 20F65, 20M35, 20E08

\textit{Keywords}:  square complex, bireversible automaton, residual finiteness, automaton group
\end{abstract}


\section{Introduction}

One of the outstanding open problems in geometric group theory is whether word-hyperbolic groups are
residually finite, which is important in understanding of the topology of hyperbolic spaces. Before the introduction of word-hyperbolic groups, Schupp and later Pride, Gersten, and others asked about the residual properties of groups in a neighborhood of word-hyperbolic groups: for finitely presented small-cancelation groups, automatic groups, CAT(0) groups. The first example of a non-residually finite group belonging to each of these classes was constructed by Wise in his dissertation \cite{Wise:PhD}. Shortly after that Burger and Mozes \cite{BM:FP_simple,BM:LatticesProductTrees} constructed finitely presented torsion-free simple groups. All these examples
are the fundamental groups of complete square complexes (non-positively curved square complexes covered by the direct product of two trees).

\begin{figure}
\begin{minipage}[b]{0.60\textwidth }
\begin{center}
\[\arraycolsep=1.4pt\def\arraystretch{2.2}
\begin{array}{c}
\WangTile{a}{0}{0}{b}\quad  \WangTile{b}{0}{1}{a}\quad  \WangTile{c}{0}{1}{c}\qquad\\
\WangTile{a}{1}{1}{b}\quad  \WangTile{b}{1}{0}{c}\quad  \WangTile{c}{1}{0}{a} \qquad
\end{array}
\]
\end{center}\vspace{0.2cm}
\end{minipage}\hfill
\begin{minipage}[b]{0.39\textwidth }
\begin{center}
{\begin{tikzpicture}[>=stealth,scale=0.5,shorten >=2pt, thick,every initial by arrow/.style={*->}]
  \node[state] (a) at (0, 0)   {$a$};
  \node[state] (b) at +(0: 8)  {$b$};
  \node[state] (c) at +(60: 8) {$c$};
    \path[->,scale=4]
    (b) edge  node [above]{$0|1$} (a)
    (b) edge  node [above right]{$1|0$} (c)
    (a) edge  [bend right] node [below]{$0|0$, $1|1$} (b)
    (c) edge  node [above left]{$1|0$} (a)
    (c) edge  [loop above] node {$0|1$} (c);
\end{tikzpicture}}
\end{center}
\end{minipage}
\caption{The Wise complex $\SqComplex_W$ and the Aleshin automaton}
\label{fig_WiseCompl_AleshynAut}
\end{figure}

The construction of Wise is based on a special square complex $\SqComplex_W$, which is called the main
example in \cite{Wise:PhD,Wise:CSC}. This complex is obtained by gluing the six unit squares shown in
Figure~\ref{fig_WiseCompl_AleshynAut}. Wise studied tiling properties of these squares and proved that
they admit a special non-periodic tiling of the plane with periodically labeled axes (an anti-torus). This observation was the key ingredient in several interesting examples (see \cite{HsuWise:SquareGroups,HsuWise:FixedSubgr,Wise:PeriodicFlat}). In particular,
Wise proved that the subgroup of $\pi_1(\SqComplex_W)$ generated by the loops $a,b,c$ is not separable, and therefore the amalgamated free product $\pi_1(\SqComplex_W)*_{\langle a,b,c\rangle}\pi_1(\SqComplex_W)$
is a non-residually finite CAT(0) group. The question was
raised \cite[Problem~10.19]{Wise:CSC} whether $\pi_1(\SqComplex_W)$ itself is non-residually finite. Below we give a
positive answer to this question as a corollary of a more general statement.

Our approach and initial interest in square complexes lie through the theory of
automaton groups, which is another fascinating topic of modern group theory. The theory of automaton groups deals with a special class of automata-transducers (see example in Figure~\ref{fig_WiseCompl_AleshynAut}). The relation between such automata and square complexes was discovered by Glasner and Mozes in \cite{GlMozes:AutomataSqComp}. Let $A$ be an automaton with the set of states $S$ and an input-output alphabet $X$. The square complex $\SqComplex_A$ associated to $A$ is obtained by gluing the squares given by the arrows in $A$ as shown in Figure~\ref{fig_arrow_square}. The complex $\SqComplex_A$ belongs to the class of directed $\VH$ square complexes with one vertex. The fundamental group of $\SqComplex_A$ has presentation
\[
\pi_1(\SqComplex_A)=\langle S,X \ | \ sx=yt \ \mbox{ for each arrow $s\xrightarrow{x|y}t$ in $A$} \rangle,
\]
which immediately suggest that there should be a connection between geometric properties of $\SqComplex_A$ and the combinatorial structure of $A$. Indeed, Glasner and Mozes noticed that $\SqComplex_A$ is a CSC exactly when $A$ belongs to the class of bireversible automata introduced in \cite{MacNekrSush} in relation to commensurators of free groups. The bireversible automaton corresponding to the Wise complex $\SqComplex_W$ is shown in Figure~\ref{fig_WiseCompl_AleshynAut}.

Another algebraic object related to an automaton is the associated automaton group. An automaton group $G_A$ is defined by the action of an automaton $A$ on input words. Roughly speaking, a group is an automaton group if one can put an automaton structure on the group consistent with its group structure. The study of automaton groups was initially motivated by several examples, mainly the Grigorchuk group, that enjoy many fascinating properties: torsion, intermediate growth, amenable but not elementary amenable, non-uniformly exponential growth, finite width, just-infiniteness, etc. Further investigations showed that automaton groups naturally arise in diverse areas of mathematics (see \cite{GNS,gri_sunik:branching,self_sim_groups}). In this paper we show that automaton groups are useful in the study of square complexes as well. 


\begin{figure}
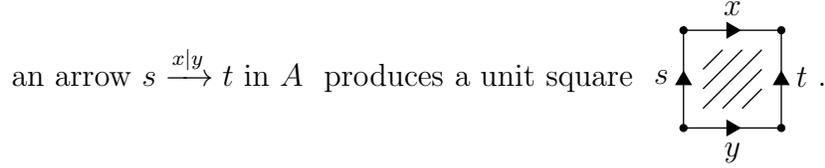

\[
\mbox{an arrow $s\xrightarrow{x|y}t$ in $A$ \ produces } \mbox{a unit square } \WangTile{$s$}{$x$}{$y$}{$t$}.
\]
\caption{A labeled unit square associated to an arrow in an automaton}\label{fig_arrow_square}
\end{figure}

{
\renewcommand{\thetheorem}{\ref{thm_infinite_separable}}
\begin{theorem}
Let $A=(X,S,\lambda)$ be a bireversible automaton.
 \begin{enumerate}
 \item If $G_A$ is finite, then $\pi_1(\SqComplex_A)$ is virtually a direct product of two free groups and
       therefore residually finite.
 \item If $G_A$ is infinite, then $\pi_1(\SqComplex_A)$ is not $\langle S\rangle$-separable and not $\langle X\rangle$-separable.
 \end{enumerate}
\end{theorem}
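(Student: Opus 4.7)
The plan is to treat the two cases by different methods, both using that for a bireversible automaton $\SqComplex_A$ is a CSC, so $\pi_1(\SqComplex_A)$ acts faithfully on the universal cover $T_1\times T_2$ (a product of regular trees), with $\langle S\rangle$ acting on $T_1$ through the quotient $G_A$ and $\langle X\rangle$ acting on $T_2$ through the dual automaton group $G_{A^{\dual}}$ (also bireversible).

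For part (1), I would first note that bireversibility yields the symmetry ``$G_A$ finite $\iff$ $G_{A^{\dual}}$ finite''; set $K_S:=\ker(\langle S\rangle\to G_A)$ and $K_X:=\ker(\langle X\rangle\to G_{A^{\dual}})$, both finite-index and hence free of finite rank. Because $\pi_1(\SqComplex_A)\hookrightarrow\Aut(T_1)\times\Aut(T_2)$ and $K_S,K_X$ act trivially on $T_1,T_2$ respectively, they commute pointwise in $\pi_1(\SqComplex_A)$; combined with $K_S\cap K_X\subseteq\langle S\rangle\cap\langle X\rangle=\{1\}$ this identifies $K_SK_X$ with the internal direct product $K_S\times K_X\cong F\times F$. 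A normal-form argument using the rewriting relations $sx=yt$ then shows $[\pi_1(\SqComplex_A):K_SK_X]\leq|G_A|\cdot|G_{A^{\dual}}|<\infty$, so $\pi_1(\SqComplex_A)$ is virtually $F\times F$ and in particular residually finite.

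For part (2), by duality it suffices to show $\langle S\rangle$ is not separable. My strategy is to exhibit an element $g\in\langle X\rangle\setminus\{1\}$ which, though not in $\langle S\rangle$, belongs to every finite-index subgroup of $\pi_1(\SqComplex_A)$ that contains $\langle S\rangle$. The main computational tool is the identity $s\cdot v=s(v)\cdot s|_v$ in $\pi_1(\SqComplex_A)$, proved by induction from $sx=yt$ and valid for every $s\in\langle S\rangle$ and $v\in X^*$; rearranged, $s(v)\cdot v^{-1}=s\cdot v\cdot(s|_v)^{-1}\cdot v^{-1}$ is an explicit element of $\langle X\rangle$ built from $\langle S\rangle$-data. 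Since $G_A$ is infinite, for every $n$ there is a nontrivial $h\in\langle S\rangle$ fixing $X^n$ pointwise yet acting nontrivially on some $vx\in X^{n+1}$; the corresponding difference $h(vx)\cdot(vx)^{-1}$ is a nontrivial element of $\langle X\rangle$ that equals an element of $\langle S\rangle$ modulo the relations visible only ``up to depth $n$''. Assembling such elements for growing $n$ should yield a $g$ which, in any finite quotient $\pi_1(\SqComplex_A)/N$, collapses into the image of $\langle S\rangle$, because $|\pi_1(\SqComplex_A)/N|$ bounds the depth of the automaton action it can distinguish.

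The main obstacle is making this ``bounded depth'' phenomenon rigorous: for a finite-index $H\supseteq\langle S\rangle$, one must show that the action of $\pi_1(\SqComplex_A)$ on the finite coset space $\pi_1(\SqComplex_A)/H$ only sees a truncation of the tree action, so that deep enough $h_n$ become invisible and permit $g$ to be absorbed into $\langle S\rangle H$. Simultaneously one must verify $g\notin\langle S\rangle$, which should follow from the free-group structure of $\langle S\rangle$ together with the fact that the construction keeps $g$ a reduced nontrivial word in the $X$-letters. Converting the unbounded depth forced by ``$G_A$ infinite'' into the bounded depth visible in any finite quotient is the crux of the argument, and is where the specific structure of bireversible automata must be used rather than generalities about lattices in $\Aut(T_1)\times\Aut(T_2)$.
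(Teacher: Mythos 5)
Your part~(1) is essentially the paper's argument: the paper reduces it to the statement that $G_A$ finite implies $G_{\dual A}$ finite and that the two kernels $K\le\langle S\rangle$, $K_{\dual}\le\langle X\rangle$ are normal in $\pi_1(\SqComplex_A)$, hence commute (their intersection lies in $\langle S\rangle\cap\langle X\rangle=\{1\}$) and generate a finite-index copy of $K\times K_{\dual}$. Your version via the faithful action on $T_1\times T_2$ is the same computation in different clothing, and the index bound from the exact factorization $\pi_1(\SqComplex_A)=\langle S\rangle\langle X\rangle$ is correct.

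Part~(2), however, has a genuine gap at exactly the point you flag as the crux, and the mechanism you propose for closing it is not the right one. A finite-index subgroup $H\supseteq\langle S\rangle$ does \emph{not} merely ``see a truncation of the tree action''; a finite quotient can separate elements whose actions first differ at arbitrarily large depth, so no argument based on bounding the depth visible in $\pi_1(\SqComplex_A)/H$ will go through. What a finite quotient cannot do is distinguish $n$-th powers from elements of $N$: if $N\lhd\pi_1(\SqComplex_A)$ has index $n$, then $g^n\in N$ for every $g$. The paper exploits precisely this. For each $m$ it defines $P_m$ to be the set of pairs of distinct letters $(x,y)$ for which there exist $g\in S^{*}$, $u\in X^{*}$ with $g^m(ux)=uy$, i.e.\ a relation $g^mux=uyh$ in $\pi_1(\SqComplex_A)$. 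If $(x,y)\in P_n$ and $[\pi_1(\SqComplex_A):N]=n$, then
\[
x^{-1}y=h^{-1}\bigl(hx^{-1}u^{-1}g^{n}uxh^{-1}\bigr)\in\langle S\rangle N,
\]
while $x^{-1}y\notin\langle S\rangle$ by the normal form. So the single element you need is $x^{-1}y$ for a pair $(x,y)\in\bigcap_{m}P_m$, and the real content is that this intersection is nonempty when $G_A$ is infinite. That step is not elementary: if the intersection were empty one shows (using $P_m\subseteq P_{m'}$ for $m'\mid m$) that some power $g^m$ is trivial for all $g$, so $G_A$ would be a finitely generated residually finite group of finite exponent, hence finite by Zelmanov's solution of the restricted Burnside problem. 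Your proposal contains neither the reduction to powers nor this Burnside-type input, and ``assembling elements for growing $n$'' cannot produce one element that works for all finite-index subgroups simultaneously without such a uniform witness.
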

\addtocounter{theorem}{-1}
}

{
\renewcommand{\thetheorem}{\ref{thm_2state_NRF}}
\begin{theorem}
Let $A=(X,S,\lambda)$ be a bireversible automaton with two states or over the binary alphabet. If $G_A$ is infinite, then $\pi_1(\SqComplex_A)$ is non-residually finite.
\end{theorem}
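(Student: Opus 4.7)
The plan is to bootstrap from Theorem~\ref{thm_infinite_separable}, which already establishes non-separability of $\langle S\rangle$ and $\langle X\rangle$ in $\pi_1(\SqComplex_A)$ whenever $G_A$ is infinite, to the stronger statement of non-residual finiteness under the restriction $|S|=2$ or $|X|=2$. A first observation is that the binary-alphabet case reduces to the two-state case by passing to the dual automaton $\dual A$: duality preserves bireversibility, yields an isomorphic square complex $\SqComplex_A \cong \SqComplex_{\dual A}$ (so $\pi_1$ is unchanged), and preserves infiniteness of the associated group, since Theorem~\ref{thm_infinite_separable} implies that $G_A$ is finite iff $\pi_1(\SqComplex_A)$ is residually finite iff $\pi_1(\SqComplex_A)$ is virtually a product of free groups, a property of $\pi_1$ alone. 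Hence we may assume $|S|=2$ throughout.

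With only two states, a bireversible automaton is highly constrained: for each letter $x\in X$ the input-state map is either the identity or the transposition of the two states, and bireversibility then severely restricts the output function. This allows us to enumerate, up to state swap, alphabet relabeling, and inversion of the automaton, all bireversible two-state automata whose group $G_A$ is infinite. For each representative in this finite list, the plan is to construct an explicit non-trivial element $w\in\pi_1(\SqComplex_A)$ lying in the finite residual. The construction uses the non-separability of $\langle S\rangle$ in a quantitative form: choose a word $g\notin\langle S\rangle$ lying in the profinite closure of $\langle S\rangle$ guaranteed by Theorem~\ref{thm_infinite_separable}, and form a commutator-type element $w=[g,s^k]$ with $s\in S$ and a suitable $k$, chosen so that a pigeonhole argument on cosets of $N\cap\langle S\rangle$ inside the rank-two free group $\langle S\rangle$ forces $w\in N$ for every finite-index normal subgroup $N$ of $\pi_1(\SqComplex_A)$.

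The main obstacle is verifying that $w$ is non-trivial in $\pi_1(\SqComplex_A)$ itself. While membership of $w$ in the finite residual is a formal consequence of the non-separability argument, showing $w\neq 1$ requires a geometric verification: one must exhibit a loop in $\SqComplex_A$ representing $w$ that does not bound a van~Kampen diagram, equivalently that $w$ acts non-trivially on the universal cover, which is a product of two trees. In the two-state setting this analysis is tractable, since the horizontal projection of the action is governed by the rank-$|X|$ free group $\langle X\rangle$ and the vertical projection by the rank-$2$ free group $\langle S\rangle$, reducing non-triviality of $w$ to a free-group word problem in each case. Once the finite list of two-state bireversible automata with infinite $G_A$ is handled by this method, the theorem follows.
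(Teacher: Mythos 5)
There are two genuine gaps here. First, your reduction goes in the wrong direction for the strategy you then propose. After normalizing to $|S|=2$ the alphabet $X$ is still arbitrary, so there is no ``finite list'' of two-state bireversible automata to enumerate: the number of such automata grows without bound with $\#X$, and a case-by-case verification cannot terminate. The paper normalizes the opposite way, to $X=\{x,y\}$ with $S$ arbitrary, precisely because the binary alphabet admits a \emph{uniform} argument: every state either stabilizes both letters or swaps them, which splits $S$ into $S^{+}$ and $S^{-}$ and makes it possible to check directly that the single explicit map $\phi(x)=x(y^{-1}x)^n$, $\phi(y)=y(x^{-1}y)^n$, $\phi(s)=s$ (with $n$ chosen so that $(y^{-1}x)^{\pm n}$ acts trivially on $S\cup S^{-1}$) preserves all defining relations of $\pi_1(\SqComplex_A)$. (A smaller point: your justification of the duality step via ``$G_A$ finite iff $\pi_1(\SqComplex_A)$ residually finite'' is not what Theorem~\ref{thm_infinite_separable} says --- the converse direction is exactly what is in question; the fact you actually need is Corollary~\ref{cor_Birev_Finite_Reducible}, that $G_A$ is finite iff $G_{\dual A}$ is finite.)

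Second, and more fundamentally, non-separability of $\langle S\rangle$ does not by itself yield non-residual finiteness --- $F_2\times F_2$ is residually finite yet has non-separable finitely generated subgroups --- so the passage from Theorem~\ref{thm_infinite_separable} to the conclusion needs a real additional idea, and your commutator-plus-pigeonhole sketch does not supply one. From $g\in\langle S\rangle N$ you only get $[g,s^k]\equiv[s_N,s^k]\pmod{N}$ with $s_N\in\langle S\rangle$ depending on $N$, and there is no reason for that commutator to die in $N$. The missing ingredient is the endomorphism above together with Theorem~\ref{thm_Wise_Fix_separable}: one checks $Fix(\phi)=\langle S\rangle$ (because $\phi$ strictly lengthens reduced words in $\langle X\rangle$), so if $\pi_1(\SqComplex_A)$ were residually finite it would be $\langle S\rangle$-separable, contradicting Theorem~\ref{thm_infinite_separable}. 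This also hands you the explicit nontrivial element $(x^{-1}y)^{-1}\phi(x^{-1}y)=(x^{-1}y)^{2n}$ of the finite residual, whose nontriviality is immediate from the freeness of $\langle X\rangle$ (Theorem~\ref{prop_F_A_properties}) --- no van Kampen diagram analysis is needed.
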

\addtocounter{theorem}{-1}
}

The key ingredient in the proof of Theorem~\ref{thm_2state_NRF} is a nontrivial endomorphism of $\pi_1(\SqComplex_A)$ that is trivial on all states or all letters. The existence of such endomorphism follows from the fact that the automaton group $G_A$ contains a subautomaton nontrivially isomorphic to the original automaton $A$.
For example, for the Wise complex $\SqComplex_W$ the map
\[
\phi:\begin{array}{c}
       a\mapsto a, \\
       b\mapsto b, \\
       c\mapsto c,
     \end{array} \qquad
     \begin{array}{c}
       0\mapsto 01^{-1}01^{-1}0 \\
       1\mapsto 10^{-1}10^{-1}1
     \end{array}
\]
extends to an endomorphism of $\pi_1(\SqComplex_W)$ with $Fix(\phi)=\langle a,b,c\rangle$. Since $\pi_1(\SqComplex_W)$ is not $\langle a,b,c\rangle$-separable, it is non-residually finite; the element $(0^{-1}1)^4$ belongs to the intersection of finite index subgroups of $\pi_1(\SqComplex_W)$.


Interestingly, that the automaton associated to the Wise complex is well known in the theory of automaton groups. This automaton was constructed by Aleshin in \cite{A:free} in the first attempt to generate a free non-abelian group by (initial) automata. The proof was considered not complete, and the problem remained open for many years.
The first automaton realization of free non-abelian groups was made by Glasner and Mozes in \cite{GlMozes:AutomataSqComp} based on the connection with square complexes and Burger-Mozes groups. The smallest automaton generating a free group constructed by Glasner and Mozes has $6$ states over a $14$-letter alphabet.
Finally, Vorobets and Vorobets in \cite{Vorobets:Aleshyn} proved that the Aleshin automaton generates the free group of rank three. Surprisingly, among the hundreds of automata with three states over the binary alphabet only the Aleshin automaton generates a free non-abelian group (see \cite{Classif32}). Automaton realizations of free groups and free products of cyclic groups of order two are constructed in \cite{SavVor:FreeC2,SteinVor,Vorobets:Series}.

\begin{figure}
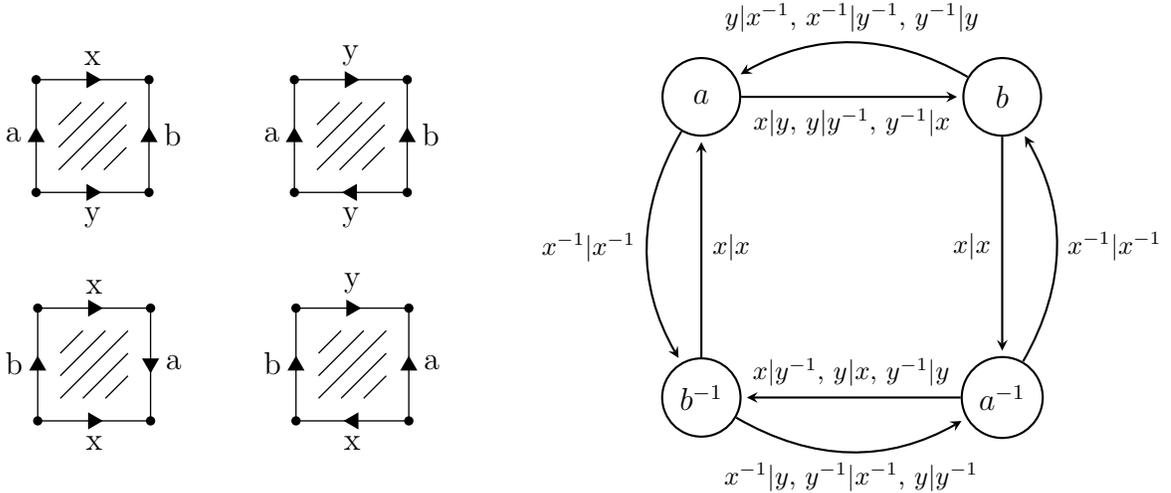

\begin{minipage}[b]{0.49\textwidth}
\begin{center}
\ComplexFourNRF
\end{center}\vspace{0.1cm}
\end{minipage}
\begin{minipage}[b]{0.49\textwidth}
\begin{center}
\BMFourAutomDihedral
\end{center}
\end{minipage}
\caption{A smallest CSC $\SqComplex_D$ with non-residually finite fundamental group and
the associated automaton with $G_A\cong C_3*C_3$ and $G_{\dual A}\cong F_2$}
\label{fig_complex_BM44_D4}
\end{figure}

In \cite{JW:SmIrrLattice} Janzen and Wise proved that CSCs with at most three squares, five or seven squares have residually finite fundamental group. Burger and Mozes in \cite{BM:LatticesProductTrees} proved that for every $n\geq 109$ and $m\geq 150$ there exists a complete $\VH$ complex with $n$ and $m$ elements in the vertical-horizontal decomposition having a non-residually finite (even virtually simple) fundamental group. The smallest open cases were CSCs with four squares and directed complete $\VH$ complexes with six squares. There are no bireversible automata with less than three states over the binary alphabet generating an infinite group, and only two bireversible automata with three states: the Aleshin automaton and the Bellaterra automaton (see \cite{Classif32}). By Theorems~\ref{thm_infinite_separable} and \ref{thm_2state_NRF} these two automata produce the smallest possible complete directed $\VH$ square complexes with non-residually finite fundamental groups.

The developed technique also works for non-directed $\VH$ complexes. Using computations with GAP, based on Rattaggi computations from \cite{Rattaggi:PhD}, we have checked that there are only two complete $\VH$ complexes $\SqComplex_D$ and $\SqComplex_S$ with four squares and one vertex that could have a non-residually finite group (for all other complexes the associated automata generate finite groups). These two complexes are shown in Figures~\ref{fig_complex_BM44_D4} and \ref{fig_complex_BM44_S4} together with the associated automata.
Both of these complexes were studied by Rattaggi, who conjectured that they have residually finite fundamental groups (see \cite[Section~4.10]{Rattaggi:ExaSQ}). Janzen and Wise in \cite{JW:SmIrrLattice} proved that $\SqComplex_D$ admits an anti-torus and, therefore, $\pi_1(\SqComplex_D)$ is a smallest irreducible lattice in the direct product of two trees. The question was raised whether $\pi_1(\SqComplex_D)$ is residually finite and in the theorem below we answer this question negatively. Interestingly, the corresponding automata provide new simple automaton representations of the free group $F_2$ and the first automaton representation of $C_3*C_3$.

{
\renewcommand{\thetheorem}{\ref{thm_SqComplex_D}}
\begin{theorem}
Let $\SqComplex_D$ be the square complex given by the four squares in Figure~\ref{fig_complex_BM44_D4} and
$A$ be the associated bireversible automaton. Then:
\begin{enumerate}
  \item $\pi_1(\SqComplex_D)$ is non-residually finite;
  \item $G_A\cong C_3*C_3$ \ and \ $G_{\dual A}\cong F_2$.
\end{enumerate}
\end{theorem}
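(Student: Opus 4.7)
My plan is to prove Part (2) first and then deduce Part (1) from Theorem~\ref{thm_2state_NRF}. Reading the four squares of $\SqComplex_D$ off the figure, the root-level action on $X=\{x,y,x^{-1},y^{-1}\}$ is computed to be $a=(x\ y\ y^{-1})$ fixing $x^{-1}$ and $b=(y\ x^{-1}\ y^{-1})$ fixing $x$, so both generators have order $3$ at the root. A direct computation of state sections then gives $(a^3)|_\ell\in\{b^{3},b^{-3}\}$ and $(b^3)|_\ell\in\{a^{3},a^{-3}\}$ for every $\ell\in X$. Since $a^3$ and $b^3$ already act as the identity at the root and all of their sections are cubes of the other generator (hence again trivial at the root), an induction on levels forces $a^3=b^3=1$ in $\Aut(X^*)$. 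This provides a surjection $C_3*C_3\twoheadrightarrow G_A$.

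The crucial injectivity step I would attack with a ping-pong argument on the boundary $\partial X^*$. The $\langle a\rangle$- and $\langle b\rangle$-orbit structures on $X$ are complementary: $\{x,y,y^{-1}\}\sqcup\{x^{-1}\}$ for $a$ and $\{y,x^{-1},y^{-1}\}\sqcup\{x\}$ for $b$, so the $a$-fixed singleton lies in a $3$-orbit of $b$ and vice versa. Depth-one cylinders are too coarse to be immediately disjoint attracting sets, so I expect to descend to level two or three and use the explicit sections $a|_x=b$, $a|_{x^{-1}}=b^{-1}$, $b|_y=a$, $b|_x=a^{-1}$ (and their symmetric analogues) to build disjoint cylinder sets $U_a,U_b\subset\partial X^*$ satisfying $a^k(U_b)\subseteq U_a$ and $b^k(U_a)\subseteq U_b$ for $k=1,2$. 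The classical ping-pong lemma for free products then upgrades the surjection to the isomorphism $G_A\cong C_3*C_3$.

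For the dual claim $G_{\dual A}\cong F_2$, the root-level action on $S=\{a,b,a^{-1},b^{-1}\}$ is $x=(a\ b\ a^{-1}\ b^{-1})$ and $y=(a\ b)(a^{-1}\ b^{-1})$, with sections $x|_a=y$, $x|_b=x$, $y|_a=y^{-1}$, $y|_b=x^{-1}$ (and their inverse-letter counterparts). Iterating these sections precludes every finite-order relation: for instance $(x^4)|_a=yxy^{-1}x^{-1}$, whose root permutation is non-trivial, so $x^4\neq 1$. To rule out all non-trivial relations between $x$ and $y$, I would set up a second ping-pong argument on $\partial S^*$ using the cyclic $x$-orbit on $S$ together with the two $y$-orbits $\{a,b\}$ and $\{a^{-1},b^{-1}\}$; a reduced word $x^{i_1}y^{j_1}\cdots$ in $\langle x,y\rangle$ is tracked through the tree via the explicit section table, and the disjointness of the resulting image cylinder with its starting cylinder shows non-triviality.

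With Part (2) in hand, Part (1) follows at once: $A$ is bireversible over the binary alphabet $\{x,y\}$ and $G_A\cong C_3*C_3$ is infinite, so Theorem~\ref{thm_2state_NRF} yields that $\pi_1(\SqComplex_D)$ is non-residually finite. I expect the main obstacle to be the injectivity step in $C_3*C_3\twoheadrightarrow G_A$: unlike for free-group realisations there is no obvious infinite-order element to leverage, and the ping-pong attractors have to be tuned carefully to the self-similar dynamics of $\langle a\rangle$ and $\langle b\rangle$, likely requiring a descent to at least level two or three of the tree before $U_a$ and $U_b$ can be made disjoint.
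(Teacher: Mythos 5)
Your computation of the root permutations, the sections, and the relations $a^3=b^3=e$ is correct and matches the paper, but both load-bearing steps of your plan fail. First, the reduction of Part~(1) to Theorem~\ref{thm_2state_NRF} is not available: $\SqComplex_D$ is a \emph{non-directed} $\VH$ complex, so the associated bireversible automaton has alphabet $X=\{x,y,y^{-1},x^{-1}\}$ and state set $S=\{a,b,b^{-1},a^{-1}\}$ --- four letters and four states, not a binary alphabet --- and moreover Theorem~\ref{thm_2state_NRF} is a statement about the directed complex $\SqComplex_A$ built from an automaton, which is not $\SqComplex_D$ (the paper notes explicitly that $\SqComplex_{A_\SqComplex}\neq\SqComplex$ for non-directed $\SqComplex$). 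The paper's actual route to Part~(1) is: $G_A$ is infinite because $\pi_1(\SqComplex_D)$ contains the Janzen--Wise anti-torus (Proposition~\ref{prop_anti_infini_group}), hence $\pi_1(\SqComplex_D)$ is not $\langle X\rangle$-separable by Theorem~\ref{thm_infinite_separable} (which does extend to non-directed complexes), and then one exhibits the explicit endomorphism $\phi(a)=a^4$, $\phi(b)=b^4$, $\phi(x)=x$, $\phi(y)=y$ with $Fix(\phi)=\langle X\rangle$ and concludes by Theorem~\ref{thm_Wise_Fix_separable}. Your proposal contains no substitute for this endomorphism, which is the genuinely new ingredient needed for Part~(1).

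Second, the ping-pong argument on $\partial X^*$ cannot be made to work, no matter how deep you descend in the tree. The group $G_A$ acts by automorphisms of the rooted tree, so it preserves the uniform Bernoulli measure $\mu$ on $\partial X^*$, and any nonempty finite union of cylinders has positive measure. If $U_a,U_b$ were disjoint such sets with $a^{k}(U_b)\subseteq U_a$ and $b^{k}(U_a)\subseteq U_b$ for $k=1,2$, then $\mu(U_b)\le\mu(U_a)\le\mu(U_b)$, so $a(U_b)$ and $a^2(U_b)$ both coincide with $U_a$ up to null sets; applying $a$ to the first identity gives $a(U_a)=U_a$ mod $\mu$, whence $U_b=a^{-1}(U_a)=U_a$ mod $\mu$, and for clopen sets this means $U_a=U_b$, contradicting disjointness. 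The same obstruction kills the proposed ping-pong for $G_{\dual A}\cong F_2$ on $\partial S^*$. This is precisely why the paper proves injectivity by a different mechanism: it shows that $G_{\dual A}$ acts transitively on the alternating words over $S$ of each fixed length (using the orbit structure of $\St_1(G_{\dual A})$ on $S^2$), so that by Theorem~\ref{thm_G_A_and_pi_1} and Remark~\ref{rem_nontriv_orbit} a single nontrivial alternating word forces all of them to be nontrivial in $G_A$; the freeness of $G_{\dual A}$ is obtained symmetrically from the transitivity of $G_A$ on reduced words over $X$. You would need to replace your ping-pong step by such a duality/transitivity argument (or some other device) for Part~(2) to go through.
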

\addtocounter{theorem}{-1}
}

Since the universal cover of $\SqComplex_D$ is the direct product of two regular trees of degree four, $\pi_1(\SqComplex_D)$ is a non-residually finite group isometric to $F_2\times F_2$. Therefore, the full group $C^{*}$-algebra of $\pi_1(\SqComplex_D)$ is not residually finite dimensional; this may be interesting in view of an open question whether the $C^{*}$-algebra of $F_2\times F_2$ is residually finite dimensional, which is equivalent to the Connes embedding conjecture.

In \cite{JW:SmIrrLattice} Janzen and Wise suggested that $\SqComplex_D$ is not a unique example of a complex with four squares that produce an irreducible lattice. We confirm this by proving that $\pi_1(\SqComplex_S)$ is irreducible as well, however, we do not know whether $\pi_1(\SqComplex_S)$ is residually finite. The freeness of automaton groups $G_A$ and $G_{\dual A}$ strongly suggest that all nontrivial normal subgroups of $\pi_1(\SqComplex_S)$ have finite index, what was conjectured in \cite[Conjecture~23]{Rattaggi:ExaSQ}. Moreover, the associated automaton $A$ is a smallest self-dual automaton that generates a free group.

{
\renewcommand{\thetheorem}{\ref{thm_SqComplex_S}}
\begin{theorem}
Let $\SqComplex_S$ be the square complex given by the four squares in Figure~\ref{fig_complex_BM44_S4} and
$A$ be the associated bireversible automaton. Then:
\begin{enumerate}
  \item $G_A\cong F_2$ \  and \ $G_{\dual A}\cong F_2$;
  \item $\langle a,x\rangle$, $\langle a,y\rangle$, $\langle b,x\rangle$ and $\langle b,y\rangle$ form anti-tori in $\pi_1(\SqComplex_S)$.
\end{enumerate}
\end{theorem}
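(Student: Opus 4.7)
The plan splits into two essentially independent tasks: proving $G_A \cong G_{\dual A} \cong F_2$ in part (1), and exhibiting the four anti-tori in part (2). Both arguments rest on direct combinatorial analysis of the four labeled squares in Figure~\ref{fig_complex_BM44_S4} and the wreath recursion of the associated automaton $A$.

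For part (1), I would first check that $A$ is self-dual, i.e.\ $A \cong \dual A$ as automata, by exhibiting an explicit isomorphism on the tile set realized by swapping the horizontal and vertical edges. This immediately yields $G_A \cong G_{\dual A}$ and reduces the problem to showing $G_A \cong F_2$. To prove freeness of $G_A = \langle a, b \rangle$, I would read the wreath recursion of the generators directly from the arrows of $A$ and then argue by induction on word length that every nontrivial freely reduced word $w \in \{a,b\}^{\pm 1}$ acts nontrivially on $X^*$. The inductive step produces, for each such $w$, a vertex $v$ whose section $w|_v$ is again a freely reduced nontrivial word whose length is controlled by a fixed fraction of $|w|$; iterating the descent reduces matters to a finite check on short words on the first few levels of the tree. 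The main obstacle is controlling the free reduction that may occur when passing from $w$ to $w|_v$, and this is where the specific structure of the four tiles enters: one must verify by a finite case analysis which consecutive pairs of generators in $w$ can produce cancellation at the descent, and show that the resulting map on reduced words cannot collapse any nontrivial word to the identity.

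For part (2), the relation $g^m h^n = h^n g^m$ in $\pi_1(\SqComplex_S)$ corresponds combinatorially to a closed $m \times n$ rectangular tiling by the four squares whose horizontal sides both read $h^n$ and whose vertical sides both read $g^m$. Fixing the pair $(a,x)$, the bottom-left tile is forced by the initial edges $a$ and $x$, and the $\VH$ completeness of $\SqComplex_S$ then determines a unique deterministic propagation along the bottom row and the left column. I would push this propagation further and show that the resulting sequences of horizontal and vertical labels are aperiodic, so that no such closed rectangle can exist for any $m, n \neq 0$. The other three pairs $(a,y)$, $(b,x)$, and $(b,y)$ follow either by the self-duality of $A$ or by the evident symmetries of the tile set that exchange $a \leftrightarrow b$ and $x \leftrightarrow y$. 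The deeper difficulty of the theorem lies in the freeness argument of part (1), since proving freeness from a wreath recursion admits no general algorithm and requires an inductive scheme tailored to the specific transition rules of this $4$-state, $4$-letter bireversible automaton.
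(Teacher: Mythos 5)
Your self-duality step and the reduction $G_A\cong G_{\dual A}$ coincide with the paper's first move, but the core of part (1) is where the proposal breaks down. You plan a contraction/descent: pass from a reduced word $w$ over $S^{\pm1}$ to a section $w|_v$ ``whose length is controlled by a fixed fraction of $|w|$'' and reduce to a finite check on short words. For a bireversible automaton this mechanism is unavailable: the set of reduced words over $S^{\pm 1}$ is invariant under taking sections (it spans a subautomaton of $\leftidx{^{\pm*}}{A}{^{\pm*}}$, as noted in Section~\ref{subsect_fund_group_birever} of the paper), so the section of a reduced word of length $n$ at any $v\in X^{*}$ is again a reduced word of length exactly $n$; no cancellation ever occurs and the descent never shortens anything, so the induction has no base to reach. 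The paper's route is entirely different and is the missing idea here: it first proves $G_A$ is infinite by exhibiting an explicit isomorphism of $A$ with the subautomaton of $\leftidx{^*}{A}{^*}$ on the states $\{a^{\pm3},b^{\pm3}\}$ over the alphabet $\{x^{\pm3},y^{\pm3}\}$, which makes the $a$-orbits of the words $x^{2\cdot 3^n}$ unbounded in size; it then shows, using the fact that every nontrivial orbit of an element of $\St_1(G_A)$ on $X^2$ has three elements, that $G_A$ acts transitively on reduced words over $X$ of each length; by Theorem~\ref{thm_G_A_and_pi_1} and Remark~\ref{rem_nontriv_orbit} this transitivity forces every nontrivial reduced word over $X$ to be nontrivial in $G_{\dual A}$, i.e.\ $G_{\dual A}\cong F_2$, and self-duality transfers this to $G_A$. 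Nothing in your outline supplies a substitute for this duality step.

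For part (2), proposing to ``show that the resulting sequences of horizontal and vertical labels are aperiodic'' restates the definition of an anti-torus rather than giving a method. The paper derives the anti-torus $\langle a,x\rangle$ from the quantitative fact already established for infiniteness: a relation $a^{n_0}x^{m_0}=x^{m_0}a^{n_0}$ would force $a^{n_0}$ to fix every $x^m$ and hence bound all $a$-orbits of the $x^m$ by $n_0$, contradicting the unboundedness obtained from the self-similar copy above. Moreover, the symmetry you invoke for the remaining three pairs does not exist: the swap $a\leftrightarrow b$, $x\leftrightarrow y$ would send the defining relation $ax=x^{-1}b$ to $by=y^{-1}a$, whereas the actual relation of $\SqComplex_S$ is $by=y^{-1}a^{-1}$. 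The paper instead reruns the same orbit-growth argument separately for $\langle a,y\rangle$, $\langle b,x\rangle$ and $\langle b,y\rangle$.
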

\addtocounter{theorem}{-1}
}

\begin{figure}
\begin{minipage}[b]{0.49\textwidth}
\begin{center}
\ComplexFourRF
\end{center}\vspace{0.1cm}
\end{minipage}
\begin{minipage}[b]{0.49\textwidth}
\begin{center}
\BMFourAutomSymmetric
\end{center}
\end{minipage}
\caption{The square complex $\SqComplex_S$ and
the associated automaton with $G_A\cong G_{\dual A}\cong F_2$}
\label{fig_complex_BM44_S4}
\end{figure}

The paper is organized as follows. In Section~\ref{sect_AutomatonGroups} we recall basic facts about automaton groups (see \cite{GNS,gri_sunik:branching,self_sim_groups} for more details). In Section~\ref{sect_Automata_SqComplexes} we describe the connection between automaton groups and square complexes. The residual properties of $\pi_1(\SqComplex_A)$ are studied in Section~\ref{sect_ResidualFiniteness}. In Section~\ref{sect_CSC_4squares} we prove Theorems~\ref{thm_SqComplex_D} and \ref{thm_SqComplex_S}. In the last section, following the approach of Rattaggi \cite{Rattaggi:PhD}, we construct finitely presented torsion-free simple groups which decompose into an amalgamated free product $F_7*_{F_{49}}F_7$.

\vspace{0.3cm}\textbf{Acknowledgment}. The first author would like to thank D'Aniele Dangeli, Rostislav Grigorchuk, Dmytro Savchuk, and Yaroslav Vorobets for fruitful discussions. Also the authors would like to thank the developers of the program package \textrm{AutomGrp} \cite{AutomGrp} which has been used to perform many of the
computations described in this paper.

\section{Automata and automaton groups}\label{sect_AutomatonGroups}

\subsection{Automata}\label{subsect_Automata}

Let $X$ be a nonempty set and $X^{*}$ the free monoid over $X$. The elements of $X^{*}$ are finite words $x_1x_2\ldots x_n$, where $x_i\in X$ and $n\in\mathbb{N}$, together with the empty word. The length of a word $v=x_1x_2\ldots x_n$ is $|v|=n$. The set $X^n$ consists of all words of length $n$. By $X^{-1}$ we denote the set of formal elements $x^{-1}$ for $x\in X$. Let $X^{\pm *}$ be the set of all (non-reduced) words over $X\cup X^{-1}$. In contrast to general elements of $X^{\pm *}$, the nonempty elements of $X^{*}$ are called positive words over $X$. The free group generated by $X$ is denoted by $F_X$.

We consider complete deterministic automata-transducers (Mealy automata) with the same input and output alphabets.
Hence, in this article:
\begin{definition}
An \textit{automaton} is a triple $A=(S,X,\lambda)$, where $S$ and $X$ are nonempty sets and $\lambda:S\times X\rightarrow X\times S$ is an arbitrary map. The set $X$ is \textit{the alphabet} of input and output letters, the set $S$ is \textit{the set of states of $A$}.
\end{definition}
Finite automata have finitely many states and finite alphabet.
We identify an automaton $A=(S,X,\lambda)$ with a directed labeled graph on the vertex set $S$ with the following edges:
\[
s\xrightarrow{x|y}t \ \ \mbox{ whenever } \ \ \lambda(s,x)=(y,t).
\]
Note that for every $s\in S$ and every $x\in X$ there exists a unique arrow passing from $s$ and labeled by $x|y$ for some $y\in X$. The existence of such arrows indicates the completeness of $A$, while the uniqueness --- that $A$ is deterministic. In terms of automata theory, an arrow $s\xrightarrow{x|y} t$ means that if the automaton
is initialized at state $s$ and reads the letter $x$, then it outputs the letter $y$ and changes its active
state to $t$.

Two automata $A=(S_A,X_A,\lambda_A)$ and $B=(S_B,X_B,\lambda_B)$ are \textit{isomorphic} if there exist bijections $\phi:S_A\rightarrow S_B$ and $\psi:X_A\rightarrow X_B$ such that $\lambda_A(s,x)=(y,t)$ if and only if $\lambda_B(\phi(s),\psi(x))=(\psi(y),\phi(t))$. If $A$ and $B$ share the same alphabet $X=X_A=X_B$, then $A$ and $B$ are call \textit{$X$-isomorphic} if there exists an isomorphism that is trivial on $X$.

There are two standard operations over automata: taking dual automaton and taking inverse automaton for invertible automata. The \textit{dual of an automaton} $A=(S,X,\lambda)$ is the automaton $\dual(A)=(X^{-1},S^{-1},\delta)$, where
\[
\delta(x^{-1},s^{-1})=(t^{-1},y^{-1}) \quad \mbox{ if $\lambda(s,x)=(y,t)$},
\]
or in the graphical representation:
\[
x^{-1}\xrightarrow{s^{-1}|t^{-1}}y^{-1} \mbox{ \ in $\dual(A)$ \quad if \ }  s\xrightarrow{x|y}t \mbox{ \ in $A$}.
\]
The reason to put inverse sign will be clear in the next section. Since $A$ is complete and deterministic, the dual $\dual(A)$ is complete and deterministic too, so it is always a well-defined automaton. Note that $\dual(\dual(A))=A$.
We say that $A$ is \textit{self-dual} if $\dual(A)$ is isomorphic to $A$.

The \textit{inverse of an automaton} $A=(S,X,\lambda)$ is the tuple $\inv(A)=(S^{-1},X,\delta)$, where
\[
\delta(s^{-1},y)=(x,t^{-1}) \quad \mbox{ if $\lambda(s,x)=(y,t)$},
\]
or in the graphical representation:
\[
s^{-1}\xrightarrow{y|x}t^{-1} \mbox{ \ in $\inv(A)$ \quad if \ }  s\xrightarrow{x|y}t \mbox{ \ in $A$}.
\]
The $\inv(A)$ may be not deterministic/complete. The automaton $A$ is called \textit{invertible} if $\inv(A)$ is well-defined, which is equivalent to the following property:
\[
\mbox{for every $s\in S$ and $y\in X$ there exists an arrow $s\xrightarrow{x|y}t$ in $A$}
\]
for some $x\in X$ and $t\in S$. Note that $\inv(\inv(A))=A$.

Any automaton $A=(S,X,\lambda)$ can be naturally extended to an automaton $A^*=(S,X^*,\lambda^*)$ by consequently applying the rule:
\begin{align*}
s\xrightarrow{x_1|y_1}p \ \mbox{ and } \ p\xrightarrow{x_2|y_2}t \quad \ \mbox{ produce } \ \quad s\xrightarrow{x_1x_2|y_1y_2}t,
\end{align*}
which in automata theory corresponds to the consecutive processing of input strings of letters. Also, we can extend $A$ to an automaton $\leftidx{^*}{A}{}=(S^{*},X,\lambda^{*})$ by consequently applying the rule:
\begin{align*}
s_2\xrightarrow{y|z}t_2 \ \mbox{ and } \ s_1\xrightarrow{x|y}t_1 \quad \ \mbox{ produce } \quad \ s_2s_1\xrightarrow{x|z}t_2t_1,
\end{align*}
which is automata theory corresponds to the (left) composition of automata. By applying consequently these rules we get a well-defined automaton $\leftidx{^*}{A}{^*}=(S^{*},X^{*},\lambda^{*})$.

If $A$ is invertible, then we can consider the automaton $\leftidx{^{\pm*}}{A}{^*}=\leftidx{^*}{(A\cup\inv(A))}{^*}$ with the set of states $S^{\pm *}$ over the alphabet $X^{*}$. Note that the set of reduced words over $S$ spans a subautomaton, which produces an automaton structure on the free group $F_S$.

\subsection{Automaton groups}

Every automaton $A=(S,X,\lambda)$ produces two semigroup actions --- a left action $S^{*}\curvearrowright X^{*}$ and a right action $S^{*}\curvearrowleft X^{*}$ defined by the rule: for $g,h\in S^{*}$ and $u,v\in X^{*}$,
\[
g(u)=v \ \mbox{ and } \ (g)u=h \ \qquad \mbox{if $g\xrightarrow{u|v}h$ in $\leftidx{^*}{A}{^*}$},
\]
which is well-defined, because $\leftidx{^*}{A}{^*}$ is complete and deterministic. In other words, for each state $s\in S$, we put $s(x_1x_2\ldots x_n)=y_1y_2\ldots y_n$ for a path in $A$ of the form
\[
s\xrightarrow{x_1|y_1}s_2\xrightarrow{x_2|y_2}s_3\xrightarrow{x_3|y_3}\ldots\xrightarrow{x_n|y_n}s_{n+1},
\]
and the transformation of $X^{*}$ defined by a word over $S$ is exactly the left composition of the respective  transformations defined by the states. Similarly, the transformation of $S^{*}$ defined by a word over $X$ is the right composition of the transformations defined by the letters.

The transformations of $X^{*}$ defined by the states of $A$ are invertible if and only if $A$ is invertible. The transformation defined by a state $s^{-1}$ of $\inv(A)$ is inverse to the transformation defined by the state $s$ of $A$. The automaton $\leftidx{^{\pm*}}{A}{^*}$ defines an action of $S^{\pm*}$ on $X^{*}$, and the transformation of $X^{*}$ defined by a word $w\in S^{\pm*}$ is exactly the composition of transformations defined the states of $A$ and their inverses. Hence, every invertible automaton produces a natural action of the free group $F_S$ on $X^{*}$.

\begin{definition}
Let $A=(S,X,\lambda)$ be a finite invertible automaton. The quotient of $F_S$ by the kernel of its action on $X^{*}$ is called the \textit{automaton group} $G_A$.
\end{definition}

From another point of view, the automaton group $G_A$ is the group generated by the transformations of
$X^{*}$ defined by the states of $A$ under composition.

The dual automaton $\dual(A)=(X^{-1},S^{-1},\delta)$ produces a left action $(X^{-1})^{*}\curvearrowright
(S^{-1})^{*}$ and a right action $(X^{-1})^{*}\curvearrowleft (S^{-1})^{*}$. By taking formal inverses,
these two actions are translated to the right action $S^{*}\curvearrowleft X^{*}$ and the left action
$S^{*}\curvearrowright X^{*}$, respectively, associated to $A$:
\[
g\xrightarrow{u|v} h \ \mbox{ in $\leftidx{^*}{A}{^*}$} \quad \mbox{ if and only if } \quad u^{-1}\xrightarrow{g^{-1}|h^{-1}}v^{-1} \ \mbox{ in $\leftidx{^*}{\dual(A)}{^*}$}.
\]

The dual automaton $\dual(A)$ is invertible if and only if the right action $S^{*}\curvearrowleft X^{*}$ is
invertible. In this case, there is a natural automaton structure on $F_X$ over the alphabet $S$ and the
action of $F_X$ on the space $S^{*}$. The automaton group $G_{\dual(A)}$ is the quotient of $F_X$ by the
kernel of its action on $S^{*}$. The group $G_{\dual(A)}$ is generated by the transformation of $S^{*}$
defined by the letters in $X$ under composition.

Every automaton group $G$ admits a natural sequence of finite index subgroups
\[
G=\St_G(0)\geq\St_G(1)\geq\St_G(2)\geq\ldots,
\]
where $\St_G(n)=\{g\in G : g(v)=v \ \mbox{ for all $v\in X^n$}\}$ is the stabilizer of words of length $n$.
Since their intersection is trivial, every automaton group is residually finite. Another property of all
automaton groups is that they have solvable word problem.

\section{Automata and square complexes}\label{sect_Automata_SqComplexes}

In this section we describe the connection between automata and square complexes discovered by Glasner and Mozes in \cite{GlMozes:AutomataSqComp}. We give a somewhat different presentation with emphasis on combinatorial and algebraic properties.

\subsection{Square complexes}

A \textit{square complex} is a combinatorial $2$-complex whose $2$-cells are squares, i.e., they are attached by combinatorial paths of length four. We are interested in a special class of square complexes --- complete $\VH$ square complexes, introduced in \cite{Wise:PhD}.

A square complex is called a \textit{$\VH$ complex} if its $1$-cells can be partitioned into two classes $V$ and $H$ such that the attaching map of each $2$-cell alternates between the edges of $V$ and $H$.  If the attaching map of each $2$-cell preserves the orientation of the edges of $V$ and $H$, then the $\VH$ complex is called \textit{directed}. The Gromov link condition implies that a $\VH$ complex is non-positively curved if there are no double edges in the links of vertices.

A square complex is called a \textit{complete square complex} (CSC for short), if the link of each vertex is a complete bipartite graph. A natural example of a complete $\VH$ complex is a direct product of two graphs. Moreover, a square complex is complete if and only if its universal cover is a direct product of two trees (see \cite[Theorem~1.10]{Wise:PhD}), therefore, the fundamental group of a compact CSC acts freely and cocompactly on a CAT(0) space.

\subsection{Square complexes associated to automata}

Let $A=(S,X,\lambda)$ be a finite automaton. We associate to $A$ a set of Wang tiles, unit squares with labeled edges, as follows:
\[
W_A=\left\{ \WangTile{$s$}{$x$}{$y$}{$t$}  \mbox{ \ for each arrow $s\xrightarrow{x|y}t$ in $A$} \right\}.
\]
The set $W_A$ contains $\#S\cdot \#X$ squares, whose horizontal sides are labeled by letters from $X$,
while the vertical sides are labeled by states from $S$. In addition, all horizontal sides are oriented from
left to right, and all vertical edges are oriented from bottom to up.

The \textit{square complex $\SqComplex_A$ associated to $A$} has one vertex, a directed loop for every $s\in S$ and every $x\in X$, and a $2$-cell for every square in $W_A$. The complex $\SqComplex_A$ is a directed $\VH$ square complex. The fundamental group of $\SqComplex_A$ has finite presentation
\[
\pi_1(\SqComplex_A)=\langle S,X \ | \ sx=yt \ \mbox{ for each arrow $s\xrightarrow{x|y}t$ in $A$} \rangle.
\]
It is direct to see that an arrow $g\xrightarrow{v|u}h$ in $\leftidx{^*}{A}{^*}$ implies that $W_A$ admits a tiling of a finite rectangle such that its left side is labeled by $g$, the top side by $v$, the bottom side by $u$, and the right side by $h$, which produces the relation $gv=uh$ in $\pi_1(\SqComplex_A)$.

The next statement follows from the fact that we consider complete automata (see similar statements in \cite[Section~4.1]{Rattaggi:PhD} and \cite{DGKPR:BoundaryWang}).

\begin{proposition}
For any finite automaton $A$ the set of Wang tiles $W_A$ admits a periodic tiling of the plane and the group $\pi_1(\SqComplex_A)$ contains $\mathbb{Z}^2$ as a subgroup. Therefore, $\pi_1(\SqComplex_A)$ is never Gromov-hyperbolic.
\end{proposition}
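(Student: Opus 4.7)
The plan is to construct a closed torus inside the complex $\SqComplex_A$, translate it into a pair of commuting elements of $\pi_1(\SqComplex_A)$ that together generate $\mathbb{Z}^2$, and then invoke the standard fact that Gromov-hyperbolic groups contain no subgroup isomorphic to $\mathbb{Z}^2$.

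For the torus, I would introduce the self-map $\Phi\colon S^m \times X^n \to S^m \times X^n$ defined by $\Phi(g,v) = ((g)v,\, g(v))$, where $(g)v$ and $g(v)$ denote the right column and bottom row of the unique $m\times n$ rectangle of Wang tiles with left column $g$ and top row $v$, as produced by $\leftidx{^*}{A}{^*}$. Because the target is finite, the orbit of any starting pair $(g_0,v_0)$ is eventually periodic; after discarding a preperiod I obtain rectangles $R_0,\ldots,R_{L-1}$ with $R_i$ having left $g_i$, top $v_i$, right $g_{i+1}$, bottom $v_{i+1}$ (indices modulo $L$). I then tile an $L\times L$ array by placing $R_{i+j\bmod L}$ at grid position $(i,j)$: horizontal adjacencies match because the right side of $R_k$ equals the left side of $R_{k+1}$, and vertical adjacencies match because the bottom of $R_k$ equals the top of $R_{k+1}$. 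The resulting $mL\times nL$ block is a torus, with left column $G := g_0 g_1 \cdots g_{L-1}$ equal to its right column and top row $V := v_0 v_1 \cdots v_{L-1}$ equal to its bottom row, and it lifts to a doubly-periodic tiling of the plane.

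This torus yields the relation $GV = VG$ in $\pi_1(\SqComplex_A)$. To upgrade $\langle G, V\rangle$ to a copy of $\mathbb{Z}^2$ I would use the two length-counting homomorphisms $\pi_1(\SqComplex_A) \to \mathbb{Z}$: the first sending each $s\in S$ to $1$ and each $x\in X$ to $0$, the second with the roles of $S$ and $X$ swapped. Both are well-defined because each defining relation $sx=yt$ carries two $S$-letters and two $X$-letters, so it becomes the tautology $1=1$ under each map. They send $(G,V)$ to $((mL,0),(0,nL))$ in $\mathbb{Z}\oplus\mathbb{Z}$, so $G$ and $V$ are nontrivial and not powers of a common element; together with $GV=VG$ this forces $\langle G, V\rangle \cong \mathbb{Z}^2$. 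Non-hyperbolicity is then immediate from the standard theorem that every abelian subgroup of a Gromov-hyperbolic group is virtually cyclic.

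The main obstacle is the bookkeeping in the staircase-to-torus step: the diagonal indexing $R_{i+j\bmod L}$ is the specific pattern that simultaneously aligns every vertical and every horizontal interface, and one must verify this carefully against the definition of $\Phi$. Once the torus is in hand, the passage to the group-theoretic conclusion is essentially formal.
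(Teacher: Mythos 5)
Your proof is correct, and its second half (the two letter-counting homomorphisms, equivalently the single surjection $\pi_1(\SqComplex_A)\to\mathbb{Z}\times\mathbb{Z}$ sending $s\mapsto(1,0)$ and $x\mapsto(0,1)$, combined with the commuting relation to certify $\langle G,V\rangle\cong\mathbb{Z}^2$) is exactly what the paper does. Where you genuinely diverge is in manufacturing the periodic tiling. The paper runs the pigeonhole argument one level lower: it forms the finite directed graph on the vertex set $S\times X$ with an arrow $(s,x)\to(t,y)$ whenever $s\xrightarrow{x|y}t$, extracts a directed cycle $(s_1,x_1)\to\cdots\to(s_n,x_n)\to(s_1,x_1)$, and observes that $w=s_n\cdots s_1$ and $u=x_1\cdots x_n$ then satisfy $w\xrightarrow{u|u}w$ in $\leftidx{^*}{A}{^*}$. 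This yields a \emph{single} $n\times n$ rectangle whose top and bottom are both labeled $u$ and whose left and right are both labeled $w$ --- a torus on the nose, with no assembly step. Your version instead iterates the rectangle-completion map $\Phi$ on the finite set $S^m\times X^n$, takes the periodic part of an orbit, and glues the $L$ resulting rectangles along the diagonal pattern $R_{i+j\bmod L}$; the bookkeeping you flag does check out (interfaces and the outer boundary all match), but it costs an $L\times L$ assembly and a verification that the paper's choice of pigeonhole makes unnecessary. The trade-off is that your construction is more flexible --- it produces a commuting pair starting from \emph{any} prescribed pair $(g_0,v_0)\in S^m\times X^n$ after passing to the periodic tail, which is essentially the mechanism the paper reuses later (in the proofs of Theorem~\ref{thm_G_A_and_pi_1} and Proposition~\ref{prop_anti_infini_group}) --- whereas the paper's cycle argument is the minimal input needed for this proposition. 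One cosmetic caveat: the order in which $g_0,\dots,g_{L-1}$ concatenate into $G$ depends on the bottom-to-top orientation convention for vertical edges, so it may be $g_{L-1}\cdots g_0$ rather than $g_0\cdots g_{L-1}$; this does not affect the argument.
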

\begin{proof}
Let us construct the directed graph $\Gamma$ on the vertex set $S\times X$, where we put the arrow $(s,x)\rightarrow (t,y)$ if $s\xrightarrow{x|y}t$ in $A$. Since $A$ is finite, the graph $\Gamma$ contains a directed cycle
\[
(s_1,x_1)\rightarrow (s_2,x_2)\rightarrow \ldots \rightarrow (s_n,x_n)\rightarrow (s_1,x_1).
\]
Therefore, we have the transitions in the automaton $\leftidx{^*}{A}{^*}$:
\[
s_n\ldots s_2s_1\xrightarrow{x_1|x_1} s_{1}s_n\ldots s_2\xrightarrow{x_2|x_2} s_2s_1\ldots s_3 \rightarrow\ldots\xrightarrow{x_n|x_n} s_n\ldots s_2s_1.
\]
Put $w=s_n\ldots s_2s_1$ and $u=x_1x_2\ldots x_n$. Then $\leftidx{^*}{A}{^*}$ contains a loop at $w$
labeled by $u|u$, which corresponds to the relation $wu=uw$ in $\pi_1(\SqComplex_A)$. It follows that $W_A$
admits a tiling of a rectangle with left/right labels $w$ and top/bottom labels $u$, which extends to a
periodic tiling of the plane.

Now we show that the subgroup $\langle w,u\rangle$ of $\pi_1(\SqComplex_A)$ is isomorphic to $\mathbb{Z}^2$. Since $\SqComplex_A$ is a directed $\VH$ complex, there exists a natural surjective homomorphism
\[
\phi: \pi_1(\SqComplex_A)\rightarrow\mathbb{Z}\times\mathbb{Z}
\]
which extends $s\mapsto (1,0)$ for all $s\in S$ and $x\mapsto (0,1)$ for all $x\in X$. Since we already know that $\langle w,u\rangle$ is abelian and $\phi(w)=(n,0)$, $\phi(u)=(0,n)$, the statement follows.
\end{proof}

It seems to be an interesting problem to develop a method which, given a finite automaton $A$, describes
all periodic tilings for the tileset $W_A$. These periodic tilings correspond to loops in the
automaton $\leftidx{^*}{A}{^*}$ labeled by $v|v$ for some $v\in X^{*}$. However, we do not see a nice ``finite'' description of all possible periodic tilings.

For general square complexes the following problem remains open.
\begin{problem}\label{probl_square_hyperb_Z2}
Is it true that if the fundamental group of a square complex is CAT(0) but not Gromov-hyperbolic, then it contains $\mathbb{Z}^2$ as a subgroup?
\end{problem}
This problem is a special case of a famous Gromov's question on whether each CAT(0) group which is not Gromov-hyperbolic contains $\mathbb{Z}^2$ as a subgroup. One of the approaches to get a negative answer to these problems would be to construct a set of Wang tiles which admits only non-periodic tilings of the plane with strong restrictions on side labels (see discussion in \cite[Section~4]{KarePapa:DetermAperiodic}, where the first $4$-way deterministic aperiodic tileset is constructed).

\subsection{Bireversible automata and complete square complexes}

Independently in automata theory, tiling theory and in the study of square complexes people came up to similar classes of objects with strong deterministic properties. In automata theory this property is called bireversibility. It was introduced in \cite{MacNekrSush} in relation to commensurators of free groups.

By applying the inverse and dual operations to any automaton $A$ we get eight (not necessary deterministic and complete) automata:
\begin{equation}\label{eqn_eight_automata}
A, \  \dual(A), \ \inv(A), \ \inv\dual(A), \ \dual\inv(A), \ \dual\inv\dual(A), \ \inv\dual\inv(A), \ \inv\dual\inv\dual(A)=\dual\inv\dual\inv(A).
\end{equation}
The arrows in each of these automata corresponding to an arrow in $A$ are shown in Figure~\ref{fig_arrows_dual_inverse}.
\begin{figure}
\begin{center}
\begin{tabular}{|rc|rc|}
  \hline
$A:$ & $s\xrightarrow{x|y}t$ & $\dual(A):$ & $x^{-1}\xrightarrow{s^{-1}|t^{-1}}y^{-1}$\\ \hline
$\inv(A):$ & $s^{-1}\xrightarrow{y|x}t^{-1}$ & $\dual\inv(A):$ & $y^{-1}\xrightarrow{s|t}x^{-1}$\\ \hline
$\dual\inv\dual(A):$ & $t\xrightarrow{x^{-1}|y^{-1}}s$ & $\inv\dual(A):$ & $x\xrightarrow{t^{-1}|s^{-1}}y$\\ \hline
$\dual\inv\dual\inv(A):$ & $t^{-1}\xrightarrow{y^{-1}|x^{-1}}s^{-1}$ & $\inv\dual\inv(A):$ & $y\xrightarrow{t|s}x$
\tabularnewline \hline
\end{tabular}\caption{Arrows in the eight automata obtained from $A$ by passing to the dual and inverse automata}\label{fig_arrows_dual_inverse}
\end{center}
\end{figure}

\vspace{-0.5cm}
\begin{definition}
A finite automaton $A$ is called \textit{bireversible} if all eight automata in (\ref{eqn_eight_automata}) are well-defined, i.e., complete and deterministic.
\end{definition}

Since we start with a complete and deterministic automaton $A$, all eight automata in (\ref{eqn_eight_automata}) are complete if and only if all of them are deterministic. Actually, since the complete and deterministic properties are preserved under passing to the dual automaton, an automaton $A$ is bireversible if and only if $\inv(A)$, $\inv\dual(A)$, and $\inv\dual\inv(A)$ are deterministic (equiv., complete), or in other words, $A$, $\dual A$ and $\dual\inv (A)$ are invertible.

Notice that all the arrows in Figure~\ref{fig_arrows_dual_inverse} produce the same relation in the fundamental groups of the corresponding square complexes:
\[
sx=yt,\qquad s^{-1}y=xt^{-1},\qquad x^{-1}t=sy^{-1},\qquad y^{-1}t^{-1}=s^{-1}x^{-1}.
\]
Therefore, the trivial map on $S$ and $X$ extends to an isomorphism between the square complexes associated to the eight automata.

The bireversibility of an automaton $A$ can be checked using a finite bipartite graph $\Gamma_A$ associate
to $A$. The vertex set of $\Gamma_A$ will be the disjoint union $(S\cup S^{-1})\cup (X\cup X^{-1})$. Each
arrow in $A$ contributes four edges in $\Gamma_A$:
\[
s\xrightarrow{x|y}t \mbox{ \, in $A$} \quad \Rightarrow \quad (s,x), \ (s^{-1},y), \ (t,x^{-1}), \ (t^{-1},y^{-1}) \mbox{ \, in $\Gamma_A$}.
\]
The four edges represent corresponding arrows in $A$, $\inv(A)$, $\inv\dual(A)$, and $\inv\dual\inv(A)$.
Since we want each of these automata to be complete, there should by edges $(s,x)$, $(s^{-1},y)$,
$(t,x^{-1})$, and $(t^{-1},y^{-1})$ for all $s,t\in S$ and $x,y\in X$. Therefore, $A$ is bireversible if
and only if $\Gamma_A$ is a complete bipartite graph. Note that $\Gamma_A$ is the link of the unique vertex
of $\SqComplex_A$.

Bireversibility has a nice interpretation in terms of the tileset $W_A$. Note that the deterministic
property of $A$ implies that the colors of two edges adjacent to the top left corner uniquely determines a
tile from $W_A$. The other three corners are responsible for deterministic properties of $\inv(A)$,
$\inv\dual(A)$, and $\inv\dual\inv(A)$. Therefore, the bireversibility of automata corresponds to the
$4$-way deterministic property of Wang tilesets (this property means that the colors of any two adjacent
edges uniquely determine a Wang tile).

\begin{proposition}
Let $A$ be a finite automaton. The following statements are equivalent:
\begin{enumerate}
  \item $A$ is bireversible;
  \item $\Gamma_A$ is a complete bipartite graph;
  \item $W_A$ is $4$-way deterministic;
  \item $\SqComplex_A$ is a complete square complex;
  \item $\SqComplex_A$ is non-positively curved;
  \item the universal cover of $\SqComplex_A$ is the direct product of two trees.
\end{enumerate}
\end{proposition}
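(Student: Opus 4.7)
The plan is to establish the six equivalences by proving a short chain: (1)$\Leftrightarrow$(2)$\Leftrightarrow$(3) from combinatorics of $A$ alone, (2)$\Leftrightarrow$(4) from identifying $\Gamma_A$ as a vertex link, and (4)$\Leftrightarrow$(5)$\Leftrightarrow$(6) by combining standard facts about $\VH$ complexes with a counting identity specific to $\SqComplex_A$. Most of the input needed for the first chain is already laid out in the paragraphs just before the proposition.

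For (1)$\Leftrightarrow$(2), I would repackage the discussion preceding the proposition. Each arrow $s\xrightarrow{x|y}t$ of $A$ contributes the four edges $(s,x),(s^{-1},y),(t,x^{-1}),(t^{-1},y^{-1})$ to $\Gamma_A$, which are precisely the ``source-incidences'' of the corresponding arrows in $A$, $\inv(A)$, $\inv\dual(A)$, $\inv\dual\inv(A)$. Splitting edges of $\Gamma_A$ into the four sign-types according to whether their endpoints lie in $S$ vs.\ $S^{-1}$ and $X$ vs.\ $X^{-1}$, completeness of each of the four (equivalently, all eight) automata in (\ref{eqn_eight_automata}) is exactly the statement that every pair of a given sign-type is realised as an edge, which together says that $\Gamma_A$ is complete bipartite. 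For (1)$\Leftrightarrow$(3), I would read off the four corners of a Wang tile: each corner specifies one horizontal and one vertical colour, and requiring that these two colours determine the tile is precisely the deterministic transition rule of one of $A$, $\inv(A)$, $\inv\dual(A)$, $\inv\dual\inv(A)$; bireversibility is the conjunction of these four determinism conditions (the remaining four automata of (\ref{eqn_eight_automata}) add no new information, since dualising preserves determinism).

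The equivalence (2)$\Leftrightarrow$(4) is by definition once one notes that $\Gamma_A$ is the link of the unique vertex of $\SqComplex_A$, and (4)$\Leftrightarrow$(6) is \cite[Theorem~1.10]{Wise:PhD}, cited earlier. The subtle step is (4)$\Leftrightarrow$(5). By the Gromov link condition applied to a $\VH$ complex, $\SqComplex_A$ is non-positively curved iff $\Gamma_A$ is a simple bipartite graph (no loops, no double edges); thus (4)$\Rightarrow$(5) is immediate. For the converse I would invoke the counting identity
\[
\#\{\text{edges of }\Gamma_A\text{ with multiplicity}\}\;=\;4\cdot\#W_A\;=\;4|S||X|\;=\;|S\cup S^{-1}|\cdot|X\cup X^{-1}|,
\]
the last quantity being the number of edges of the \emph{complete} bipartite graph on $(S\cup S^{-1},X\cup X^{-1})$. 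Since the total count is already maximal, $\Gamma_A$ has no repeated edges iff it is complete bipartite, which is (4).

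The main obstacle, as isolated above, is (5)$\Rightarrow$(4): in a general square complex NPC is strictly weaker than completeness, and the implication relies on the arithmetic coincidence that $\SqComplex_A$ has one vertex, exactly $|S||X|$ squares (one per arrow in $A$), and exactly four corners per square, making the edge-with-multiplicity count in $\Gamma_A$ equal to the size of the complete bipartite graph on the bipartition of its vertex set. Once this identity is verified, ``simple'' and ``complete bipartite'' become the same condition for $\Gamma_A$, and the proof is complete.
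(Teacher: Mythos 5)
Your proposal is correct, and for the combinatorial equivalences (1)$\Leftrightarrow$(2)$\Leftrightarrow$(3)$\Leftrightarrow$(4) it follows the same route as the paper, which simply points back to the discussion preceding the proposition (the four corner-incidences per arrow, the identification of $\Gamma_A$ with the vertex link, and the definition of a CSC). Where you genuinely diverge is in the geometric part: the paper disposes of items 5 and 6 entirely by citation, invoking \cite{GlMozes:AutomataSqComp} for the equivalence of 1, 5, 6 and \cite{Wise:PhD} for 4, 5, 6, whereas you keep the citation only for (4)$\Leftrightarrow$(6) and replace the rest with a self-contained counting argument: since $\SqComplex_A$ has exactly $\#S\cdot\#X$ squares (one per arrow of a complete deterministic automaton) and each square contributes four link edges, $\Gamma_A$ has exactly $4|S||X|$ edges with multiplicity, which equals the number of edges of the complete bipartite graph on $(S\cup S^{-1})\times(X\cup X^{-1})$; hence ``no double edges'' (the Gromov link condition for a $\VH$ complex, hence NPC) and ``complete bipartite'' coincide for $\Gamma_A$. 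This is a nice observation that correctly isolates why (5)$\Rightarrow$(4), which fails for general square complexes, holds here, and it makes the proposition independent of the Glasner--Mozes reference. The only point worth making explicit is that you use the link condition as an equivalence (NPC iff the link is simple), whereas the paper only states the ``if'' direction in its preliminaries; for square complexes this is indeed an iff, but you should say so since your (5)$\Rightarrow$(4) step needs it.
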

\begin{proof}
The equivalence of items $1$, $2$, $3$ and $4$ is explained above. The equivalence of items $1$, $5$ and $6$ is
proved in \cite{GlMozes:AutomataSqComp}, and the equivalence of items $4$, $5$, $6$ follows from \cite{Wise:PhD}.
\end{proof}

Every bireversible automaton $A=(S,X,\lambda)$ can be extended to an automaton $A^{\pm}$ with the state set $S^{\pm 1}$ and the alphabet $X^{\pm 1}$, in which the arrows are given by the first column of
Figure~\ref{fig_arrows_dual_inverse}. Basically, $A^{\pm}$ is the union of $A$, $\inv(A)$,
$\dual\inv\dual(A)$, and $\dual\inv\dual\inv(A)$, while its dual $\dual(A^{\pm})$ is the union of
$\dual(A)$, $\dual\inv(A)$, $\inv\dual(A)$, and $\inv\dual\inv(A)$. Note that the automaton $A^{\pm}$ is
bireversible as well. Then we can naturally extend the state set of $A^{\pm}$ to words over $S^{\pm 1}$ and the alphabet to words over $X^{\pm 1}$ as in Section~\ref{subsect_Automata} and construct an automaton $\leftidx{^{\pm*}}{A}{^{\pm*}}=(S^{\pm*},X^{\pm*},\lambda^{*})$. The left group action $F_S\curvearrowright X^{*}$ associated to $A$ is extended to the left action $F_S\curvearrowright X^{\pm*}$ associated to $A^{\pm}$, while the right group action $S^{*}\curvearrowleft F_X$ is extended to the right action $S^{\pm*}\curvearrowleft F_X$. Note that the sets of reduced words $F_X\subset X^{\pm*}$ and $F_S\subset S^{\pm*}$ are invariant under these actions (they induce a subautomaton in $\leftidx{^{\pm*}}{A}{^{\pm*}}$).

\subsection{The fundamental group of $\SqComplex_A$ for bireversible automata}\label{subsect_fund_group_birever}

The next statement contains some basic properties of the fundamental groups of complexes $\SqComplex_A$ known for all complete $\VH$ square complexes with one vertex (see \cite{Wise:PhD}).

\begin{theorem}\label{prop_F_A_properties}
Let $A=(S,X,\lambda)$ be a bireversible automaton. 
\begin{enumerate}
  \item The group $\pi_1(\SqComplex_A)$ is torsion-free and CAT(0).

  \item The subgroups of $\pi_1(\SqComplex_A)$ generated by $S$ and $X$ are free of rank $\#S$ and $\#X$ respectively.

  \item(Normal forms) The group $\pi_1(\SqComplex_A)$ admits an exact factorization by its free subgroups $\langle S\rangle$ and $\langle X\rangle$. In particular, every element $\gamma\in \pi_1(\SqComplex_A)$ can be uniquely written in the form $\gamma=gv$ and in the form $\gamma=uh$ for $g,h\in\langle S\rangle$ and $v,u\in\langle X\rangle$.

\end{enumerate}
\end{theorem}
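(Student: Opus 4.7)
By the preceding proposition, $\SqComplex_A$ is a compact non-positively curved square complex whose universal cover $\tilde{\SqComplex}_A$ is a product of two trees $T_V\times T_H$, hence a CAT(0) space. The deck action of $\pi_1(\SqComplex_A)$ on $\tilde{\SqComplex}_A$ is free, properly discontinuous, and cocompact, which establishes (1): the group is CAT(0), and any nontrivial torsion element would fix a point by the Cartan fixed point theorem, contradicting freeness. For (2), fix a lift $(*_V,*_H)$ of the unique vertex of $\SqComplex_A$ and use the standard observation that deck transformations of a $\VH$ complex preserve both the product decomposition and the edge labels. The vertical rose $Y_V\subset\SqComplex_A$ then lifts to the slice $T_V\times\{*_H\}$, a $2\#S$-regular tree, and a reduced word $w\in F_S$ lifts at $(*_V,*_H)$ to a non-backtracking edge path in this tree. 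Since distinct non-backtracking paths in a tree have distinct endpoints, $w$ stabilizes $(*_V,*_H)$ iff $w=1$ in $F_S$; combined with freeness this gives $\langle S\rangle\cong F_S$, and the argument for $\langle X\rangle\cong F_X$ is identical.

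For (3), the plan is to produce a representative of the form $gv$ by rewriting. Given any word in $S^{\pm 1}\cup X^{\pm 1}$, let $N$ count the adjacent pairs in which an $X$-letter precedes an $S$-letter, and induct on $N$. Bireversibility says all eight automata obtained from $A$ by iterating $\dual$ and $\inv$ are complete and deterministic, and this supplies, for every $s\in S$, $x\in X$ and every choice of signs $\varepsilon,\delta\in\{\pm 1\}$, a unique square identity $x^{\delta}s^{\varepsilon}=(s')^{\varepsilon}(x')^{\delta}$ in $\pi_1(\SqComplex_A)$; the four instances $yt=sx$, $y^{-1}s=tx^{-1}$, $xt^{-1}=s^{-1}y$ and $x^{-1}s^{-1}=t^{-1}y^{-1}$ come respectively from $\inv\dual\inv(A)$, $\dual\inv(A)$, $\inv\dual(A)$ and $\dual\inv\dual\inv(A)$. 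Each application strictly decreases $N$, and on termination I would free-reduce inside the factors using part~(2) to obtain the form $gv$.

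For uniqueness, suppose $gv=g'v'$ with $g,g'\in\langle S\rangle$ and $v,v'\in\langle X\rangle$. Setting $\eta:=g^{-1}g'=v(v')^{-1}$, this element lies in $\langle S\rangle\cap\langle X\rangle$. Tracking the basepoint, $\eta\in\langle S\rangle$ forces $\eta\cdot(*_V,*_H)\in T_V\times\{*_H\}$, while $\eta\in\langle X\rangle$ forces it into $\{*_V\}\times T_H$; these two slices intersect only at $(*_V,*_H)$, so $\eta$ fixes the basepoint and freeness from~(1) gives $\eta=1$, whence $g=g'$ and $v=v'$. The $uh$-form is symmetric, obtained by pushing $X$-letters rightward past $S$-letters instead. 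The delicate geometric input, used in both (2) and (3), is that deck transformations respect the vertical/horizontal splitting and the edge labels --- a standard fact about $\VH$ complexes and the place where bireversibility really does its work; once it is in hand, the remaining arguments are essentially routine.
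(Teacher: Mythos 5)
Your proof is correct. Note first that the paper does not actually prove this statement: it is quoted as a known fact about complete $\VH$ complexes with one vertex, with a citation to Wise's thesis, so there is no in-paper argument to compare against line by line. Your argument is the standard one and is consistent with what the paper itself uses later: the CAT(0) and torsion-freeness claims via the free cocompact action on $T_V\times T_H$ and the Cartan fixed-point theorem, the freeness of $\langle S\rangle$ and $\langle X\rangle$ by lifting reduced words to non-backtracking paths in the tree slices $T_V\times\{*_H\}$ and $\{*_V\}\times T_H$, and the exact factorization by the rewriting procedure based on $S^{\pm1}X^{\pm1}=X^{\pm1}S^{\pm1}$ --- which is precisely the normal-form mechanism the paper describes in Section~3.4 --- together with $\langle S\rangle\cap\langle X\rangle=\{1\}$ from the fact that the two slices meet only in the basepoint. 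The one slip is bookkeeping: your fourth rewriting instance $x^{-1}s^{-1}=t^{-1}y^{-1}$ is governed by the determinism and completeness of $A$ itself (equivalently of $\dual(A)$, whose arrow $x^{-1}\xrightarrow{s^{-1}|t^{-1}}y^{-1}$ encodes exactly ``given $x^{-1}$ and $s^{-1}$, find $t^{-1}$ and $y^{-1}$''), not of $\dual\inv\dual\inv(A)$; since all eight automata are complete and deterministic for a bireversible $A$, this misattribution is harmless, but the cleaner statement is simply that the link $\Gamma_A$ being complete bipartite supplies exactly one square for each pair of corner directions, in every one of the four sign combinations.
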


There is a nice direct connection between the transitions in a bireversible automaton $A$ and the two normal forms in $\pi_1(\SqComplex_A)$ given in Theorem~\ref{prop_F_A_properties}. The completeness of the bipartite graph $\Gamma_A$ means that the generators of $\pi_1(\SqComplex_A)$ satisfy
\[
S^{\pm 1}X^{\pm 1}=X^{\pm 1} S^{\pm 1},
\]
i.e., for any $s\in S^{\pm 1}$ and $x\in X^{\pm 1}$ there exists a unique pair $y\in X^{\pm 1}$ and $t\in S^{\pm 1}$ such that $sx=yt$ in $\pi_1(\SqComplex_A)$. This explains the normal form in $\pi_1(\SqComplex_A)$: given any word $\gamma$ over generators, we can move every $s\in S^{\pm 1}$ to the left and every $x\in X^{\pm 1}$ to the right in order to find the representation $\gamma=gv$ for $g\in\langle S\rangle$ and $v\in\langle X\rangle$. We can also move every $s\in S^{\pm 1}$ to the right and every $x\in X^{\pm 1}$ to the left, and obtain another normal form $\gamma=uh$ for $u\in\langle X\rangle$ and $h\in\langle S\rangle$. Every permutation of generators $sx=yt$ corresponds to a transition in $A^{\pm}$. Therefore, for reduced words $g,h\in F_S$ and $v,u\in F_X$ we have
\begin{equation}\label{eqn_relation_transition2}
gv=uh \mbox{ in $\pi_1(\SqComplex_A)$} \quad \mbox{ if and only if} \quad g\xrightarrow{v|u}h \ \mbox{ in $\leftidx{^{\pm*}}{A}{^{\pm*}}$}.
\end{equation}
In particular, we will frequently use the
following relation between the action of $F_S$ on $X^{\pm *}$ and the group $\pi_1(\SqComplex_A)$: for
$g\in F_S$ and $v\in F_X$,
\[
g(v)=v \ \ \mbox{ if and only if } \ \ v^{-1}gv\in F_S.
\]

\subsection{Automaton groups generated by bireversible automata}

Every bireversible automaton $A=(S,X,\lambda)$ gives rise to eight invertible automata
\[
A, \  \dual(A), \ \inv(A), \ \inv\dual(A), \ \dual\inv(A), \ \dual\inv\dual(A), \ \inv\dual\inv(A), \ \inv\dual\inv\dual(A)=\dual\inv\dual\inv(A),
\]
an automaton $A^{\pm}$, and two group actions: a left action $F_S\curvearrowright X^{\pm*}$ and a right action $S^{\pm*}\curvearrowleft F_X$.  The automaton group $G_{A^{\pm}}$ is the quotient of $F_S$ by the kernel of its action on $X^{\pm*}$. The subsets of positive words $X^{*}$ and negative words $(X^{-1})^{*}$ are invariant under the action of $F_S$. The corresponding restricted actions produce $G_A=G_{\inv(A)}$ and $G_{\dual\inv\dual(A)}=G_{\inv\dual\inv\dual(A)}$. Similarly, the dual automaton group $G_{\dual(A^{\pm})}$ is the quotient of $F_X$ by the kernel of its action on $S^{\pm*}$, while $G_{\dual(A)}=G_{\inv\dual(A)}$ and $G_{\dual\inv(A)}=G_{\inv\dual\inv(A)}$ are the quotients of the corresponding actions on $S^{*}$ and $(S^{-1})^{*}$.
The next statement shows that in this way we get just two groups $G_A$ and $G_{\dual(A)}$ and describes how to recover them from $\pi_1(\SqComplex_A)$.

\begin{theorem}\label{thm_G_A_and_pi_1}
Let $A=(S,X,\lambda)$ be a bireversible automaton. Then
\[
G_A=G_{\inv(A)}\cong G_{\dual\inv\dual(A)}=G_{\inv\dual\inv\dual(A)}\cong G_{A^{\pm}}\cong F_S/K, \\
\]
where $K$ is the maximal normal subgroup of $\pi_1(\SqComplex_A)$ that is contained in $F_S=\langle S\rangle$, and
\[
G_{\dual(A)}=G_{\inv\dual(A)}\cong G_{\dual\inv(A)}=G_{\inv\dual\inv(A)}\cong G_{\dual(A^{\pm})}\cong F_X/K_{\dual}.
\]
where $K_{\dual}$ is the maximal normal subgroup of $\pi_1(\SqComplex_A)$ that is contained in $F_X=\langle X\rangle$.
\end{theorem}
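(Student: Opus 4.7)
The plan has four steps, in order. The second displayed chain (about $G_{\dual(A)}$ and $K_\dual$) will follow by applying the same argument to $\dual(A)$, since $\SqComplex_{\dual(A)}=\SqComplex_A$ under the involution swapping $S$ and $X$, so I only describe the first chain.

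The two equalities $G_A=G_{\inv(A)}$ and $G_{\dual\inv\dual(A)}=G_{\inv\dual\inv\dual(A)}$ are immediate. In each case passing to $\inv$ replaces every state $s$ by its formal inverse $s^{-1}$ and swaps input with output on each arrow, so the induced transformation of the alphabet's words is the inverse permutation. The subgroups of $\Sym(X^{*})$ (respectively $\Sym((X^{-1})^{*})$) generated by the two automata therefore coincide, and the quotients of $F_S$ by the corresponding kernels are identical.

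The heart of the theorem is the identification $F_S/K\cong G_{A^\pm}\cong G_A$. By the exact factorization of Theorem~\ref{prop_F_A_properties}, the map $v\mapsto vF_S$ is a bijection $F_X\to\pi_1(\SqComplex_A)/F_S$. Under it, the left $\pi_1$-action on $\pi_1/F_S$ restricts, for $g\in F_S$, to the map $v\mapsto u$ where $gv=uh$ is the normal form; by (\ref{eqn_relation_transition2}) this is exactly the $F_S$-action on $F_X$ read off from $\leftidx{^{\pm*}}{A}{^{\pm*}}$ (equivalently, from $A^\pm$). Since the kernel of $\pi_1$ on $\pi_1/F_S$ is by definition the normal core of $F_S$ in $\pi_1$, namely $K$, and $K\subseteq F_S$, restriction to $F_S$ still has kernel $K$, giving a faithful embedding $F_S/K\hookrightarrow\Sym(F_X)$. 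To upgrade this to $G_{A^\pm}\cong F_S/K$, I check that the word-wise $F_S$-action on $X^{\pm*}$ has kernel exactly $K$: the $\dual\inv\dual(A)$-arrows in $A^\pm$ ensure that each syllable $xx^{-1}$ in an input is processed to an output syllable $yy^{-1}$ that reduces to the empty word, so the action descends to the action on $F_X$; normality of $K$ in $\pi_1$ propagates ``being in $K$'' through next states, and induction on word length gives $K\subseteq\ker(F_S\curvearrowright X^{\pm*})$. The reverse inclusion is immediate since $F_X\subset X^{\pm*}$.

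It remains to show $G_A\cong G_{A^\pm}$ (the isomorphism with $G_{\dual\inv\dual(A)}$ is then automatic since both are $F_S/K$). The restriction $G_{A^\pm}\twoheadrightarrow G_A$ is clearly surjective, and injectivity amounts to $\ker(F_S\curvearrowright X^*)=K$, of which only the containment $\ker(F_S\curvearrowright X^*)\subseteq K$ is nontrivial. By maximality of $K$ it suffices to show this kernel is normal in $\pi_1$, which in turn reduces---by induction on reduced word length---to $F_X$-invariance under conjugation by single letters $x^{\pm 1}$. The case $x\in X$ is the standard next-state argument. The main obstacle, and the step where bireversibility enters crucially, is the case $x^{-1}$: one uses the invertibility of $\dual(A)$ to attach to each $g\in F_S$ a unique ``predecessor'' $\bar g\in F_S$ with $\bar g x=yg$ in $\pi_1$ for some $y\in X$, and then exploits the hypothesis $g\in\ker(F_S\curvearrowright X^*)$ together with the $4$-way determinism of the Wang tileset $W_A$ to force $y=x$ and $\bar g\in\ker(F_S\curvearrowright X^*)$, giving $xgx^{-1}=\bar g$ inside the kernel, as required.
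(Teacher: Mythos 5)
Your architecture is sound and is a genuine repackaging of the theorem: identifying $K$ with the kernel of the coset action of $F_S$ on $\pi_1(\SqComplex_A)/\langle S\rangle\cong F_X$ via the exact factorization, the induction showing $K\subseteq\ker(F_S\curvearrowright X^{\pm*})$ using normality of $K$, and the reduction of $G_A\cong G_{A^\pm}$ to the normality of $N:=\ker(F_S\curvearrowright X^{*})$ in $\pi_1(\SqComplex_A)$ are all correct. The gap is in the one step you yourself flag as the main obstacle. For $g\in N$ with predecessor transition $\bar g\xrightarrow{x|y}g$, the claim that $y=x$ and $\bar g\in N$ does \emph{not} follow from $4$-way determinism of $W_A$: that is a local property of the tileset which gives you existence and uniqueness of the pair $(\bar g,y)$ but cannot by itself constrain $y$ (in a bireversible automaton a state's predecessor along input $x$ generally outputs a letter different from $x$; nothing local distinguishes kernel elements). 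What is actually needed is a global finiteness argument: the next-state map $Q_x\colon h\mapsto h|_x$ preserves length and is injective on each sphere of $F_S$ (this is where the invertibility of $\inv\dual(A)$ enters), hence a bijection of each finite sphere; it maps $N$ into itself by your ``standard next-state argument,'' hence maps $N$ intersected with each sphere \emph{onto} itself; therefore the predecessor $\bar g=Q_x^{-1}(g)$ lies in $N$, and then $y=\bar g(x)=x$ because $\bar g$ fixes the one-letter word $x$. Equivalently, the forward orbit $g\to g|_x\to g|_{x^2}\to\cdots$ is eventually periodic by finiteness and purely periodic by backward determinism, so $g=g|_{x^n}$ and $\bar g=g|_{x^{n-1}}\in N$.

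This missing pigeonhole/periodicity step is exactly the engine of the paper's own proof: there one takes an arbitrary $v\in X^{*}$, observes that the sequence $g\xrightarrow{v|v}g_1\xrightarrow{v|v}g_2\to\cdots$ is periodic, deduces $gv^n=v^ng$ and hence $g(v^{-1})=v^{-1}$, and so concludes directly that triviality on $X^{*}$ implies triviality on $X^{\pm*}$, after which identifying that kernel with $K$ is routine. So your route is viable and your isolation of where bireversibility matters is accurate, but the proof is incomplete until you supply the finiteness argument; as written, the justification offered for the crucial step is the wrong tool.
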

\begin{proof}
We show that the action of $G_{A^{\pm}}$ on $X^{*}$ is faithful. Let $g\in F_S$ act trivially on $X^{*}$, and let us show that $g$ acts trivially on $X^{\pm*}$. For any $v\in X^{*}$ there exists a unique $g_1\in F_S$ such that $gv=vg_1$ in $\pi_1(\SqComplex_A)$ and $g_1$ acts trivially on $X^{*}$. We can repeat this process and construct a sequence of elements $g_1,g_2,\ldots$ in $F_S$ such that
\[
gv=vg_1, \ \ g_1v=vg_2, \ \ g_2v=vg_3, \ \ \ldots \  \mbox{ in $\pi_1(\SqComplex_A)$},
\]
which corresponds to the directed path
\[
g\xrightarrow{v|v}g_1\xrightarrow{v|v}g_2\xrightarrow{v|v}g_3\xrightarrow{v|v}\ldots \quad \mbox{ in $\leftidx{^{\pm*}}{A}{^{\pm*}}$}.
\]
Since all $g_i$ have the same length, this sequence is eventually periodic. Moreover, the normal form in $\pi_1(\SqComplex_A)$ (or the deterministic properties of bireversible automata) implies that this sequence is periodic and there exists $n\in\mathbb{N}$ such that $g_n=g$. Hence, $gv^n=v^ng$ in $\pi_1(\SqComplex_A)$, which implies  $gv^{-n}=v^{-n}g$ and $g(v^{-1})=v^{-1}$. Therefore, $g$ acts trivially on $(X^{-1})^{*}$ as well and hence on $X^{\pm *}$. It follows that $G_A\cong G_{A^{\pm}}$. The other cases are analogous.

Let $K<\pi_1(\SqComplex_A)$ be the kernel of the action of $F_S\curvearrowright X^{\pm*}$ so that
$G_{A^{\pm}}=F_S/K$. Then $K$ is preserved under conjugation by elements of $F_S$. Since every element
of $K$ acts trivially on $F_X$, $K$ is preserved under conjugation by elements of $F_X$.
Therefore, $K$ is a normal subgroup of $\pi_1(\SqComplex_A)$. It is maximal among normal subgroups that are
contained in $F_S$, because all elements of such subgroups act trivially on words over $X^{\pm 1}$.
The dual case is analogous.
\end{proof}

The following statement is a combination of \cite[Proposition~2.2]{SavVor:FreeC2} and \cite[Proposition~1.2]{BM:LatticesProductTrees}. A group $\pi_1(\SqComplex_A)$ is called \textit{reducible} if it contains a finite index subgroup of the form $K\times H$ for $K<F_S$ and $H<F_X$.

\begin{corollary}\label{cor_Birev_Finite_Reducible}
Let $A=(S,X,\lambda)$ be a bireversible automaton. The following statements are equivalent:
\begin{enumerate}
  \item $G_A$ is finite;
  \item $G_{\dual A}$ is finite;
  \item $\pi_1(\SqComplex_A)$ is reducible.
\end{enumerate}
\end{corollary}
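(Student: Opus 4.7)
The plan is to establish $(1)\Leftrightarrow(3)$; the equivalence with $(2)$ then follows by symmetry, since $\dual A$ is bireversible and $\SqComplex_{\dual A}=\SqComplex_A$. By Theorem~\ref{thm_G_A_and_pi_1} we identify $G_A\cong F_S/K$, where $K$ is the maximal normal subgroup of $\pi_1(\SqComplex_A)$ contained in $F_S$; thus $G_A$ is finite if and only if $K$ has finite index in $F_S$.

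For $(1)\Rightarrow(3)$, assume $K$ has finite index in $F_S$, so $K$ is a finitely generated free group. For each $v\in F_X$, conjugation by $v$ preserves $K$ by normality, defining $\alpha_v\in\Aut(K)$. The critical point, which is where bireversibility enters, is that $\alpha_v$ preserves word length in $F_S$: for $k\in K$ we have $v^{-1}kv=(k)v\in F_S$ (since $k$ acts trivially on $v$), and $(k)v$ is the image of $k$ under the right action of $F_X$ on $F_S$, which is a rooted tree automorphism of $\mathrm{Cay}(F_S,S)$ and therefore preserves distance from the identity. Fixing a finite generating set $k_1,\ldots,k_r$ of $K$, each $\alpha_v(k_i)$ lies in the finite set of elements of $K$ of length $|k_i|$ in $F_S$, so $v\mapsto\alpha_v$ has finite image in $\Aut(K)$; its kernel $H\leq F_X$ is of finite index, and $H$ centralizes $K$. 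Since $K\cap H\subseteq F_S\cap F_X=\{1\}$, we get $KH=K\times H$, and $[\pi_1(\SqComplex_A):KH]=|G_A|\cdot[F_X:H]<\infty$, so $\pi_1(\SqComplex_A)$ is reducible.

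For $(3)\Rightarrow(1)$, suppose $\pi_1(\SqComplex_A)$ contains a finite-index subgroup $K'\times H'$ with $K'\leq F_S$ and $H'\leq F_X$. By the normal form theorem $F_S\cap(K'\times H')=K'$, so $K'$ has finite index in $F_S$ (similarly $H'$ in $F_X$), and $K'$ centralizes $H'$. Choose coset representatives $v_1,\ldots,v_m$ for $H'$ in $F_X$. The image $\bar k'$ of any $k'\in K'$ in $\pi_1(\SqComplex_A)/K$ acts on $\mathrm{Cay}(F_X,X)$ as a rooted automorphism fixing the identity and commuting with the translations by $H'$; hence $\bar k'(hv_i)=h\cdot\bar k'(v_i)$ for every $h\in H'$, so $\bar k'$ is determined by the tuple $(\bar k'(v_1),\ldots,\bar k'(v_m))$. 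Each entry is a vertex at distance $|v_i|$ from the identity and therefore lies in a finite set, so the image of $K'$ in $\Aut(\mathrm{Cay}(F_X,X))$ is finite; its kernel equals $K'\cap K$, which thus has finite index in $K'$ and hence in $F_S$. Consequently $K$ has finite index in $F_S$ and $G_A$ is finite.

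The technical heart of both directions is the length-preservation of the rooted tree actions of $F_X$ on $\mathrm{Cay}(F_S,S)$ and of $F_S$ on $\mathrm{Cay}(F_X,X)$, a direct consequence of bireversibility (Section~\ref{subsect_fund_group_birever}); with this in hand each implication reduces to a finite counting argument on the finitely generated free subgroups $K$ and $K'$.
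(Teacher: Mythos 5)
Your argument is correct, but it is organized quite differently from the paper's. The paper first proves $(1)\Leftrightarrow(2)$ by a purely automaton-theoretic count: if $G_A$ is finite, every orbit of $F_S\curvearrowright X^{*}$ has at most $\#G_A$ elements, so each $v\in X^{\pm*}$ lies in a subautomaton of the dual with at most $\#G_A$ states, and since there are only finitely many such automata, $G_{\dual A}$ is finite. It then gets (3) from (1) and (2) together by observing that $K\times K_{\dual}$ has finite index, and disposes of $(3)\Rightarrow(1)\wedge(2)$ in one line by asserting that the kernels contain $K_1$ and $K_2$. You instead prove $(1)\Leftrightarrow(3)$ directly and recover (2) from the symmetry of reducibility under swapping the two factors together with $\pi_1(\SqComplex_{\dual A})\cong\pi_1(\SqComplex_A)$ (an identification the paper does record). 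Your complement in the forward direction is the kernel $H$ of the conjugation action of $F_X$ on the finitely generated free group $K$, rather than the kernel $K_{\dual}$ of the dual automaton action, and the finiteness of its index rests on the length-preservation of the right action of $F_X$ on $F_S$ rather than on the subautomaton count; both are legitimate uses of bireversibility. Your $(3)\Rightarrow(1)$ is in fact more careful than the paper's: the inclusion $K_1\leq K$ that the paper asserts is not immediate (an element of $F_S$ commuting with a finite-index subgroup of $F_X$ need not obviously act trivially on all of $X^{\pm*}$), whereas your coset-decomposition argument only needs $K'\cap K$ to have finite index in $K'$, which you establish honestly. The trade-off is that the paper's route yields $(1)\Leftrightarrow(2)$ as a standalone fact about dual automata, independent of the square complex, while in your route that equivalence appears only as a corollary of the geometric statement.
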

\begin{proof}
If $G_A$ is finite, then every orbit of the action $F_S\curvearrowright X^{*}$ contains at most $\#G_A$ elements. Therefore, every element $v\in X^{\pm*}$ belongs to a subautomaton of $\leftidx{^{\pm*}}{(\dual A)}{}$ with at most $\#G_A$ states. Since there are only finitely many different automata with a fixed number of states, the group $G_{\dual A}$ is finite. Hence, the items 1 and 2 are equivalent.

If $G_A$ and $G_{\dual A}$ are finite, then $K\times K_{\dual}$ has finite index in $\pi_1(\SqComplex_A)$. Conversely, if $\pi_1(\SqComplex_A)$ contains a finite index subgroup $K_1\times K_2$ with $K_1<F_S$ and $K_2<F_X$, then the kernels $K>K_1$ and $K_{\dual}>K_2$ have finite index in $F_S$ and $F_X$ respectively. Therefore, $G_A$ and $G_{\dual A}$ are finite.
\end{proof}

Theorem~\ref{thm_G_A_and_pi_1} suggests two ways to generate free groups by automata.

\begin{remark}\label{rem_just_infinite_free}
If for some bireversible automaton $A$ we had an infinite group $G_A$ and a just-infinite\footnote{An infinite group is called just-infinite if every nontrivial normal subgroup has finite index} group
$\pi_1(\SqComplex_A)$, then $G_A$ and $G_{\dual A}$ would be free groups freely generated by $S$ and $X$
respectively. However, $\pi_1(\SqComplex_A)$ cannot be just-infinite, because it projects onto
$\mathbb{Z}^2$. Nevertheless, this approach works for some non-directed $\VH$ complexes, where just-infinite
examples exist (see \cite{BM:LatticesProductTrees}) and produce free automaton groups.
\end{remark}

\begin{remark}\label{rem_nontriv_orbit}
It follows from the theorem that if $\{g_1,g_2,\ldots,g_n\}$ is an orbit of the action
$S^{\pm*}\curvearrowleft F_X$ and some $g_i$ represents a nontrivial element of $G_A$, then all
$g_1,\ldots,g_n$ represent nontrivial elements of $G_A$. In particular, if $G_A$ is infinite and $F_X$ acts
transitively on all reduced words of length $n$ for each $n\in\mathbb{N}$, then $G_A$ is a free group
freely generated by $S$. However, $F_X$ cannot have this transitivity property, because positive words over
$S$ are invariant under the action. Nevertheless, as in the previous remark, this approach works for some
non-directed $\VH$ complexes and was used by Glasner and Mozes in \cite{GlMozes:AutomataSqComp} to construct
the first examples of automata generating free groups.
\end{remark}

Besides Corollary~\ref{cor_Birev_Finite_Reducible}, it is not clear what is the relation between the groups $G_A$ and $G_{\dual(A)}$. For all examples that we know, the groups $G_A$ and $G_{\dual(A)}$ are either both finitely presented or both infinitely presented.

\begin{question}
Which bireversible automata generate finitely presented groups?
Is it true for a bireversible automaton $A$ that $G_A$ is virtually free if and only if $G_{\dual A}$ is virtually free? Is it true that groups generated by bireversible automata are linear?
\end{question}

For these questions it seems useful to consider the quotient of $\pi_1(\SqComplex_A)$ by the kernels of the
action of $\langle S\rangle$ on $\langle X\rangle$ and of the action of $\langle X\rangle$ on $\langle
S\rangle$. We get a group
\[
T_A=\pi_1(\SqComplex_A)/K\times K_{\dual}=G_A\cdot G_{\dual(A)},
\]
which admits an exact factorization by both automaton groups. The group $T_A$ brings information about
$G_A$ and $G_{\dual(A)}$, and their interconnection in $\pi_1(\SqComplex_A)$. It follows from the results
of Y.~Vorobets (private communication) that the group $T_A$ for the Aleshin and Bellaterra automata is finitely
presented. The first example of a bireversible automaton with non-finitely presented groups $G_A$ and
$G_{\dual(A)}$ is shown in Figure~\ref{fig_AutomatonZ3wrZ}; it is self-dual and generates the lamplighter
group $\mathbb{Z}_3\wr\mathbb{Z}$ (see \cite{BDR:LamplGr_Birev}).

\begin{figure}
\begin{center}
\begin{tikzpicture}[shorten >=1pt,node distance=1cm, on grid,auto,/tikz/initial text=,semithick,transform shape]
  \node[state] at (0,0) (a) {$a$};
  \node[state] at (2,3.46) (b) {$b$};
  \node[state] at (4,0) (c) {$c$};
    \path[->]
    (a) edge [in=220, out=250, loop] node  {$1|1$} (a)
    (b) edge [in=75, out=115, loop] node [above]  {$3|1$} (b)
    (c) edge [in=-60, out=-30, loop] node  {$2|1$} (c)
    (a) edge [bend left] node {$2|3$} (b)
    (b) edge [bend left] node {$1|3$} (c)
    (c) edge [bend left] node {$3|3$} (a)
    (a) edge  node {$3|2$} (c)
    (c) edge  node {$1|2$} (b)
    (b) edge  node {$2|2$} (a);
\end{tikzpicture}
\caption{A self-inverse self-dual automaton generating the lamplighter group $\mathbb{Z}_3\wr\mathbb{Z}$}\label{fig_AutomatonZ3wrZ}
\end{center}
\end{figure}

\begin{question}
For which bireversible automata $A$ is the group $T_A$ finitely presented?
\end{question}

\subsection{Anti-tori and tilings of the plane}\label{subsect_Anti_tori}

Let $A$ be a bireversible automaton and $W_{A^{\pm}}$ the set of Wang tiles associated to $A^{\pm}$. Since $W_{A^{\pm}}$ is complete and $4$-way deterministic, all possible tilings of the plane by $W_{A^{\pm}}$ can be described as follows: for every pair of sequences $(s_i)_{i\in\mathbb{Z}}$, $s_i\in S^{\pm 1}$ and $(x_i)_{i\in\mathbb{Z}}$, $x_i\in X^{\pm1 }$ there exists a unique tiling $t:\mathbb{Z}^2\rightarrow W_{A^{\pm}}$ of the plane such that the sequence $(s_i)_{i\in\mathbb{Z}}$ is read along the vertical axes, while the sequence $(x_i)_{i\in\mathbb{Z}}$ is read along the horizontal axes. For reduced sequences  this tiling corresponds to a plane in the universal cover of $\SqComplex_A$.

Let $a\in F_S$ and $b\in F_X$. Let us consider the tiling of the plane with the vertical axes labeled by $(a)_{i\in\mathbb{Z}}$ and the horizontal axes labeled by $(b)_{i\in\mathbb{Z}}$. If this tiling is periodic in both vertical and horizontal directions, then there exist nonzero $n,m\in\mathbb{Z}$ such that $a^nb^m=b^ma^n$ in $\pi_1(\SqComplex_A)$, which means that there is a torus in $\SqComplex_A$. If this tiling is not periodic, then we come to the following definition.

\begin{definition}
For $a\in F_S$ and $b\in F_X$, the subgroup $\langle a,b\rangle$ is called an
\textit{anti-torus} in $\pi_1(\SqComplex_A)$ if $a^nb^m\neq b^ma^n$ for all $n,m\in\mathbb{Z}\setminus\{0\}$.
\end{definition}

It seems to be an interesting and difficult problem, given a bireversible automaton $A$, describe all
anti-tori in $\pi_1(\SqComplex_A)$. This problem was studied in \cite{R:Anti-tori} for certain Burger-Mozes
groups, where anti-tori correspond to non-commuting pairs of Hamilton quaternions.

\begin{proposition}\label{prop_anti_infini_group}
Let $A=(S,X,\lambda)$ be a bireversible automaton. If $\pi_1(\SqComplex_A)$ admits an anti-torus, then $G_A$ is infinite.
\end{proposition}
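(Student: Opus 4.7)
I would prove the contrapositive: assuming $G_A$ is finite, I will show that no pair $(a,b) \in F_S \times F_X$ generates an anti-torus. The main tool is Corollary~\ref{cor_Birev_Finite_Reducible}, which converts finiteness of $G_A$ into the geometric statement that $\pi_1(\SqComplex_A)$ is reducible, i.e., contains a finite index subgroup of the form $N = K \times H$ with $K \leq \langle S \rangle$ and $H \leq \langle X \rangle$.

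First I would verify that $K$ has finite index in $F_S = \langle S \rangle$ and $H$ has finite index in $F_X = \langle X \rangle$. This is where Theorem~\ref{prop_F_A_properties} enters: the normal form gives $F_S \cap F_X = \{1\}$ in $\pi_1(\SqComplex_A)$. Using this together with the direct product decomposition of $N$, any element of $F_S \cap N$ decomposes uniquely as a product $kh$ with $k \in K \leq F_S$ and $h \in H \leq F_X$, forcing $h = k^{-1}(kh) \in F_S \cap F_X = \{1\}$. Hence $K = F_S \cap N$, and symmetrically $H = F_X \cap N$. The general bound $[F_S : F_S \cap N] \leq [\pi_1(\SqComplex_A) : N] < \infty$ then yields $[F_S : K] < \infty$, and likewise $[F_X : H] < \infty$.

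Now pick arbitrary $a \in F_S$ and $b \in F_X$. Since $F_S / K$ and $F_X / H$ are finite, there exist nonzero integers $n, m$ with $a^n \in K$ and $b^m \in H$. In the direct product $N = K \times H$ (viewed as a subgroup of $\pi_1(\SqComplex_A)$), the factors $K$ and $H$ centralize one another, so $a^n b^m = b^m a^n$ in $\pi_1(\SqComplex_A)$. Therefore $\langle a, b \rangle$ is not an anti-torus, and since $a, b$ were arbitrary, $\pi_1(\SqComplex_A)$ admits no anti-torus. This completes the contrapositive.

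There is no real obstacle here beyond the bookkeeping: the heavy lifting has already been done in Corollary~\ref{cor_Birev_Finite_Reducible} (equating finiteness of $G_A$ with reducibility) and in Theorem~\ref{prop_F_A_properties} (providing the normal form needed to identify the factors $K$ and $H$ with the intersections $F_S \cap N$ and $F_X \cap N$). The only subtle point is taking a bit of care to see that the direct product structure genuinely survives inside $\pi_1(\SqComplex_A)$, which is exactly what the normal form guarantees.
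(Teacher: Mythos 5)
Your proof is correct, but it takes a genuinely different route from the paper. You argue the contrapositive through the global structure theory: finiteness of $G_A$ gives reducibility via Corollary~\ref{cor_Birev_Finite_Reducible}, the normal form of Theorem~\ref{prop_F_A_properties} identifies the factors $K$ and $H$ with $F_S\cap N$ and $F_X\cap N$ (this identification is genuinely needed, since a priori $K$ could be smaller than $F_S\cap N$, and you handle it correctly), and then finite index forces powers $a^n\in K$ and $b^m\in H$ to commute. The paper instead argues directly: assuming $\langle a,b\rangle$ is an anti-torus, it shows the specific element $a$ has infinite order in $G_A$, by observing that if $a^n$ acted trivially on $X^{\pm*}$ then the path labeled $b|b$ starting at the state $a^n$ in $\leftidx{^{\pm*}}{A}{^{\pm*}}$ would be periodic (all states on it have the same length and the automaton is deterministic in both directions), closing up into a cycle and yielding $a^nb^m=b^ma^n$ for some $m$, a contradiction. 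The paper's argument is more local and slightly stronger in its conclusion (it pins the infinite order on $a$ itself, which is reused in the proof of Theorem~\ref{thm_SqComplex_S}), while yours is cleaner at the level of bookkeeping and makes the logical dependence on Corollary~\ref{cor_Birev_Finite_Reducible} explicit; both ultimately rest on the same periodicity phenomenon, since that corollary is itself proved by the cycle argument of Theorem~\ref{thm_G_A_and_pi_1}.
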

\begin{proof}
Let $a\in F_S$ and $b\in F_X$ generate an anti-torus. We will prove that $a$ represents an element of infinite order in $G_A$. Let us assume that $a^n(v)=v$ for all $v\in X^{\pm*}$. Then we have a cycle in the automaton
\[
a^n\xrightarrow{b|b}a_2\xrightarrow{b|b}a_3\xrightarrow{b|b}\ldots\xrightarrow{b|b}a^n
\]
(see the proof of Theorem~\ref{thm_G_A_and_pi_1}). This means that $a^nb^m=b^ma^n$ in $\pi_1(\SqComplex_A)$ for some $m\in\mathbb{N}$, which contradicts the assumption of the statement.
\end{proof}

It is interesting whether the converse holds:

\begin{question}\label{probl_anti_tori}
Is it true that if $G_A$ is infinite then $\pi_1(\SqComplex_A)$ contains an anti-torus?
\end{question}

This question is related to the next one studied in the theory of automaton groups.

\begin{question}\label{probl_birev_Burnside}
Can a bireversible automaton $A$ generate an infinite torsion group $G_A$?
\end{question}

By Proposition~\ref{prop_anti_infini_group}, a positive answer to Question~\ref{probl_birev_Burnside}
implies a negative answer to Question~\ref{probl_anti_tori}. However, we expect a negative answer to
Question~\ref{probl_birev_Burnside} and a positive answer to Question~\ref{probl_anti_tori}. It was shown
in \cite{KPS:3birBirns} that a bireversible automaton with two and three states cannot generate an infinite
torsion group. Note, however, that there are famous examples of infinite torsion groups generated by
non-bireversible automata like the Grigorchuk group.


\subsection{Automata from square complexes}

Let $\SqComplex$ be a complete directed $\VH$ square complex with one vertex. Let $S$ and $X$ be the loops at the corresponding vertical-horizontal decomposition of $\SqComplex$. Since $\SqComplex$ is complete and directed, for any $s\in S$ and $x\in X$ there exists a unique pair $y\in X$ and $t\in S$ such that $sx=yt$ in $\pi_1(\SqComplex)$. This property defines an automaton $A$ over the alphabet $X$ with the set of states $S$. This automaton is bireversible and the square complex $\SqComplex_A$ is exactly $\SqComplex$.

The same construction works for non-directed complexes. Let $\SqComplex$ be a complete (not necessary directed) $\VH$ square complex with one vertex. Let $S$ and $X$ be the loops at the corresponding vertical-horizontal decomposition of $\SqComplex$. The completeness of $\SqComplex$ implies that for any $s\in S^{\pm 1}$ and $x\in X^{\pm 1}$ there exists a unique pair $y\in X^{\pm 1}$ and $t\in S^{\pm 1}$ such that $sx=yt$ in $\pi_1(\SqComplex)$.
As above, these relations produce a bireversible automaton $A_{\SqComplex}$ over the alphabet $X\cup X^{-1}$ and with the set of states $S\cup S^{-1}$. Two examples of non-directed square complexes and the associated automata are shown in Figures~\ref{fig_complex_BM44_D4} and \ref{fig_complex_BM44_S4}. Note that the square complex $\SqComplex_{A}$ associated to $A_{\SqComplex}$ is not $\SqComplex$, because when we construct a square complex from automaton we treat every state as an independent color and do not take into account inverse elements.
If we start with a directed complex $\SqComplex=\SqComplex_A$ associated to a bireversible automaton $A$, then $A_{\SqComplex}$ is exactly the automaton $A^{\pm}$ constructed in Section~\ref{subsect_fund_group_birever}.

\section{Residual finiteness of $\pi_1(\SqComplex_A)$}\label{sect_ResidualFiniteness}

In this section we study the residual properties of the group $\pi_1(\SqComplex_A)$ for bireversible
automata $A$. Our analysis follows the approach of Wise from \cite{Wise:PhD} and relies on the following
statement.

Let $H$ be a subgroup of a group $G$. Then $G$ is called \textit{$H$-separable} if $H$ is the intersection
of finite index subgroup of $G$. A group $G$ is residually finite if and only if it is separable with
respect to the trivial subgroup.

\begin{theorem}[{\cite[Theorem 7.2]{Wise:CSC}}]\label{thm_Wise_Fix_separable}
Let $\phi$ be an endomorphism of a finitely generated residually finite group $G$, and let $Fix(\phi)$ be
the subgroup of elements fixed by $\phi$. Then $G$ is $Fix(\phi)$-separable.
\end{theorem}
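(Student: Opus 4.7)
The plan is to produce, for each $g \in G \setminus Fix(\phi)$, a finite-index subgroup of $G$ that contains $Fix(\phi)$ but not $g$. The natural strategy is to pass to carefully chosen finite quotients on which $\phi$ descends, and to pull back the fixed-point subgroup of the induced endomorphism.

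First I would set up a sequence of fully invariant finite-index subgroups. For $n \geq 1$, define
\[
G_n = \bigcap \{H \leq G : [G:H] \leq n\}.
\]
Because $G$ is finitely generated, only finitely many subgroups of index at most $n$ exist, so $G_n$ has finite index. For any endomorphism $\psi$ of $G$ and any finite-index subgroup $H$, the natural injection $G/\psi^{-1}(H) \hookrightarrow G/H$ gives $[G:\psi^{-1}(H)] \leq [G:H]$; consequently $G_n \leq \psi^{-1}(H)$ whenever $[G:H] \leq n$, and intersecting over all such $H$ yields $\psi(G_n) \leq G_n$. Residual finiteness of $G$ guarantees $\bigcap_n G_n = \{1\}$.

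Next, because $\phi(G_n) \leq G_n$, the endomorphism $\phi$ descends to an endomorphism $\bar\phi_n$ of the finite quotient $Q_n = G/G_n$. Let $N_n$ be the preimage under the projection $G \to Q_n$ of the fixed-point subgroup $Fix(\bar\phi_n) \leq Q_n$. Then $N_n$ has finite index in $G$, and $Fix(\phi) \subseteq N_n$, since any $\phi$-fixed element of $G$ projects to a $\bar\phi_n$-fixed element of $Q_n$.

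Finally, I would verify $Fix(\phi) = \bigcap_n N_n$. If $g \in \bigcap_n N_n$, then $g^{-1}\phi(g) \in G_n$ for every $n$, and residual finiteness forces $g^{-1}\phi(g) = 1$, so $g \in Fix(\phi)$. Thus every $g \notin Fix(\phi)$ is separated from $Fix(\phi)$ by some finite-index $N_n$. The only step where the hypotheses are essentially used is the full invariance of $G_n$, which combines finite generation (to ensure $G_n$ has finite index) with residual finiteness (to ensure $\bigcap_n G_n$ is trivial); beyond this standard bookkeeping no real obstacle arises.
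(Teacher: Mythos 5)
Your proof is correct and follows essentially the same route as the paper: both arguments hinge on the fully invariant finite-index subgroups obtained by intersecting all subgroups of index at most $n$ (finite generation gives finite index, residual finiteness gives trivial intersection) and on detecting the nontrivial element $a^{-1}\phi(a)$. The only cosmetic difference is that you separate $a$ using the preimage of $Fix(\bar\phi_n)$ in $G/G_n$, whereas the paper uses the subgroup $Fix(\phi)G_n$ directly; these serve the same purpose.
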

\begin{proof}
We include the proof for completeness. Let $a\in G\setminus Fix(\phi)$. Then $a^{-1}\phi(a)\neq e$ and
$a^{-1}\phi(a)\not\in N$ for some subgroup $N$ of finite index $n$. We can assume that $N$ is fully invariant
by passing to the intersection of all subgroups of index $\leq n$. Then $Fix(\phi)N$ has finite index and
does not contain $a$.
\end{proof}

\begin{corollary}[{\cite[Corollary 7.3]{Wise:CSC}}]\label{cor_Wise_double}
Let $D = G *_{H}G$ be the double of a group $G$ along its subgroup $H$. If $D$ is residually finite, then
$G$ is $H$-separable.
\end{corollary}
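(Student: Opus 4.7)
The plan is to apply Theorem~\ref{thm_Wise_Fix_separable} to the natural ``swap'' involution of the double $D$ whose fixed subgroup turns out to be exactly $H$, and then transfer the resulting separability of $D$ down to $G$ via the canonical retraction $D\to G$.

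To set things up, I would denote the two natural copies of $G$ inside $D$ by $G_1$ and $G_2$, so that $D = G_1 *_{H} G_2$ with the identifications $h_1 = h_2$ for every $h \in H$. The assignment $g_1 \mapsto g_2$, $g_2 \mapsto g_1$ respects these amalgamation relations and hence extends to an involutive automorphism $\phi$ of $D$. The inclusion $H \subseteq \mathrm{Fix}(\phi)$ is immediate from the identification $h_1=h_2$. For the reverse inclusion I would write an arbitrary element $w \in D$ in reduced amalgamated normal form as an alternating product $x_1 x_2 \cdots x_n$ whose factors lie alternately in fixed systems of coset representatives of $H$ in $G_1$ and in $G_2$; then $\phi(w)$ is the corresponding alternating product with the subscripts interchanged, so $w = \phi(w)$ together with the uniqueness of normal forms in an amalgamated free product forces $n \leq 1$, and a single fixed letter must already lie in $H$.

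Once $\mathrm{Fix}(\phi)=H$ is established, finite generation of $G$ (the only case of interest for this paper) yields finite generation of $D$. Thus Theorem~\ref{thm_Wise_Fix_separable} applied to $\phi$ gives that $D$ is $\mathrm{Fix}(\phi)$-separable, i.e., $H$-separable. To descend to $G$, I would use the retraction $r : D \to G$ that collapses both $G_1$ and $G_2$ onto $G$ by the identity (well-defined precisely because $h_1$ and $h_2$ both map to $h$). Given $g \in G \setminus H$, view $g$ as $g_1 \in D \setminus H$ and pick a finite-index subgroup $N \leq D$ with $H \leq N$ and $g_1 \notin N$; then $N \cap G_1$ is a finite-index subgroup of $G_1 \cong G$ that contains $H$ but does not contain $g$.

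The main technical step is the normal-form computation identifying $\mathrm{Fix}(\phi)$ with $H$; the rest is a formal consequence of Theorem~\ref{thm_Wise_Fix_separable} together with the retraction $r$. In particular, the standard Britton-style argument for reduced forms in amalgamated products is what makes the single-letter reduction in the second step rigorous, and this is where one has to be careful not to accidentally fix longer alternating words via some hidden $H$-symmetry.
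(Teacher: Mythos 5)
The paper does not actually prove this corollary---it is quoted from \cite{Wise:CSC}---so there is no in-paper argument to compare against; your proof is the standard deduction and is correct. The swap involution $\phi$ of $D$, the normal-form computation showing $\mathrm{Fix}(\phi)=H$ (with the two transversals chosen to correspond under the swap), and the final step of intersecting a separating finite-index subgroup $N\le D$ with $G_1$ all go through; the only caveat, which you already flag, is that Theorem~\ref{thm_Wise_Fix_separable} as stated requires $D$ to be finitely generated, which holds in every application made in the paper.
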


In \cite{Wise:PhD} Wise proved that $\pi_1(\SqComplex_W)$ is not $\langle a,b,c\rangle$-separable using the fact that it contains an anti-torus. We do not know which automata admit an anti-torus (see discussion in Section~\ref{subsect_Anti_tori}); instead,
we are using the following lemma, which relies on Zelmanov's solution of the restricted Burnside problem.
Note that the conclusion of the lemma immediately follows from the existence of an anti-torus.

Let $A=(X,S,\lambda)$ be a bireversible automaton. For $m\in\mathbb{N}$, let $P_m$ be the set of all pairs
$(x,y)$ of different letters $x,y\in X$ for which there exist $g\in S^{*}$ and $u\in X^{*}$ such that
$g^m(ux)=uy$ (this means there is a relation $g^mux=uyh$ in the group $\pi_1(\SqComplex_A)$ for some $h\in S^{*}$). Note that $P_m$ is empty only when $g^m$ is trivial in $G_A$ for every $g\in S^{*}$.

\begin{lemma}\label{lemma_P_m_nonempty}
Let $A=(X,S,\lambda)$ be a bireversible automaton. If $G_A$ is infinite, then $\cap_{m \in \mathbb{N}} P_m$ is nonempty.
\end{lemma}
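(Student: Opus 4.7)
The plan is to reduce to showing that each $P_m$ is nonempty and then invoke Zelmanov's solution of the restricted Burnside problem. The key structural observation is a monotonicity property under divisibility.

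First I would establish $P_{km} \subseteq P_m$ for every $k \geq 1$: if $(x,y) \in P_{km}$ is witnessed by $g \in S^{*}$ and $u \in X^{*}$ with $g^{km}(ux) = uy$, then setting $g' := g^{k} \in S^{*}$ gives $(g')^{m}(ux) = uy$, so $(x,y) \in P_m$. A trivial consequence is $\bigcap_{m=1}^{N} P_m \supseteq P_{N!}$. Since every $P_m$ sits in the finite set of pairs of distinct letters of $X$, once we know that every $P_m$ is nonempty the family $\bigl(\bigcap_{m=1}^{N} P_m\bigr)_{N\geq 1}$ is a descending chain of nonempty subsets of a finite set, so its intersection $\bigcap_m P_m$ is nonempty. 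This reduces the lemma to the statement: if $G_A$ is infinite, then $P_m \neq \emptyset$ for every $m$.

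Next I would prove the contrapositive: if $P_m = \emptyset$, then $G_A$ has exponent dividing $m$. Unwinding the definition, $P_m = \emptyset$ says that for every $g \in S^{*}$, $u \in X^{*}$, and $x \in X$, whenever $g^m(ux)$ begins with $u$ its last letter equals $x$. By induction on $n$ I would show that $g^m$ fixes every word of length $n$ for every $g \in S^{*}$. The case $n=0$ is vacuous. For the inductive step, write any word of length $n$ as $wx$ with $w \in X^{n-1}$ and $x \in X$; the recursive structure of the action in $\leftidx{^*}{A}{^*}$ gives $g^m(wx) = g^m(w) \cdot (g^m|_w)(x)$, and by the inductive hypothesis $g^m(w) = w$, so $g^m(wx)$ begins with $w$. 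The hypothesis $P_m = \emptyset$ then forces the last letter to be $x$, i.e., $g^m(wx) = wx$. Hence $g^m$ acts trivially on $X^{*}$, so $g^m = 1$ in $G_A$, and $G_A$ has exponent dividing $m$.

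Finally I would apply Zelmanov's theorem, which asserts that every finitely generated residually finite group of finite exponent is finite. The group $G_A$ is generated by the finite set $S$, is residually finite as every automaton group (noted in Section~\ref{sect_AutomatonGroups}), and by the previous step has exponent dividing $m$; therefore $G_A$ is finite, contradicting the hypothesis. Consequently $P_m \neq \emptyset$ for every $m$, and combined with the first paragraph this proves $\bigcap_m P_m \neq \emptyset$. The only nontrivial ingredient is Zelmanov's theorem; the combinatorial core is routine, with the one subtlety being the prefix-preservation used in the induction, which is immediate from the recursive definition of the action on $X^{*}$.
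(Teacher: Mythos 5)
Your proof is correct and follows essentially the same route as the paper: the divisibility monotonicity $n\mid m\Rightarrow P_m\subseteq P_n$, passage to a common multiple to reduce to a single empty $P_m$, the observation that $P_m=\emptyset$ forces $g^m$ to act trivially for all $g\in S^{*}$, and the appeal to residual finiteness plus Zelmanov's solution of the restricted Burnside problem. The only difference is cosmetic — you phrase the reduction via a descending chain $\bigcap_{m=1}^{N}P_m\supseteq P_{N!}$ rather than a single product of the $m_{xy}$, and you spell out the prefix-preservation induction that the paper merely asserts in the remark preceding the lemma.
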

\begin{proof}
Let us assume that this intersection is empty. Then, for each pair of different letters $(x, y)$, there
exists $m_{xy}\in \mathbb{N}$ such that $(x,y)\notin P_{m_{xy}}$. Let $m$ be the product of all these
numbers $m_{xy}$. Note that if $n$ divides $m$, then $P_m\subset P_n$, because we can rewrite the equality
$g^m(ux)=uy$ as $(g^{m/n})^n(ux)=uy$. Since $m_{xy}$ divides $m$ for all pairs $(x,y)$, the set $P_m$ is
empty. It follows that the element $g^m$ is trivial in $G_A$ for every $g\in S^{*}$. Hence the group $G_A$
has finite exponent. Since $G_A$ is finitely generated and residually finite, it should be finite by the
solution of the restricted Burnside problem. We get a contradiction.
\end{proof}

\begin{remark}
For automata over the binary alphabet $X=\{0,1\}$ it is straightforward to see that the infiniteness of $G_A$ implies that $\cap_{m \in \mathbb{N}} P_m=\{(0,1),(1,0)\}$, and we do not need to rely on the restricted Burnside problem.
\end{remark}

\begin{theorem}\label{thm_infinite_separable}
Let $A=(X,S,\lambda)$ be a bireversible automaton.
 \begin{enumerate}
 \item If $G_A$ is finite, then $\pi_1(\SqComplex_A)$ is virtually a direct product of two free groups and therefore residually finite.
 \item If $G_A$ is infinite, then $\pi_1(\SqComplex_A)$ is not $\langle S\rangle$-separable and not $\langle X\rangle$-separable.
 \end{enumerate}
\end{theorem}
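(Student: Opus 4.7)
For part~(1), the argument is immediate. Corollary~\ref{cor_Birev_Finite_Reducible} gives a finite-index subgroup of $\pi_1(\SqComplex_A)$ of the form $K\times H$ with $K\leq\langle S\rangle$ and $H\leq\langle X\rangle$. By Theorem~\ref{prop_F_A_properties} the subgroups $\langle S\rangle$ and $\langle X\rangle$ are free, so $K$ and $H$ are free as well. A direct product of two residually finite groups is residually finite, and residual finiteness passes to any finite-index overgroup, which gives the conclusion.

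For part~(2), the plan is to use Lemma~\ref{lemma_P_m_nonempty} to produce, once and for all, a pair of distinct letters $(x,y)\in\bigcap_{m\in\mathbb{N}}P_m$, and then to exhibit $y^{-1}x$ as a witness for the failure of $\langle S\rangle$-separability. By the translation~(\ref{eqn_relation_transition2}), for each $m$ the membership $(x,y)\in P_m$ produces elements $g_m,h_m\in S^{*}$ and $u_m\in X^{*}$ satisfying
\[
g_m^{m}\, u_m\, x \;=\; u_m\, y\, h_m \qquad \text{in } \pi_1(\SqComplex_A).
\]
First I would verify that $y^{-1}x\notin\langle S\rangle$: it is a nontrivial element of $\langle X\rangle$, and the exact-factorization part of Theorem~\ref{prop_F_A_properties} implies $\langle S\rangle\cap\langle X\rangle=\{e\}$.

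Next, for any finite-index subgroup $N\leq\pi_1(\SqComplex_A)$ with $\langle S\rangle\leq N$, I would pass to the left coset action $\phi\colon\pi_1(\SqComplex_A)\to Q:=\Sym(\pi_1(\SqComplex_A)/N)$. Since $\ker\phi\leq N$, the equivalence $g\in N\Leftrightarrow\phi(g)\in\phi(N)$ holds, and $\phi(\langle S\rangle)\subseteq\phi(N)$. Applying $\phi$ to the displayed relation with $m=|Q|$ annihilates $\phi(g_m)^{m}$ by Lagrange, and cancellation yields $\phi(y^{-1}x)=\phi(h_m)\in\phi(\langle S\rangle)\subseteq\phi(N)$, so $y^{-1}x\in N$. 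This is precisely where the ``for every $m$'' form of Lemma~\ref{lemma_P_m_nonempty} is needed: $|Q|$ depends on $N$ and can be arbitrarily large. The non-$\langle X\rangle$-separability then follows by running the same argument for the bireversible automaton $\dual A$ (with $G_{\dual A}$ infinite by Corollary~\ref{cor_Birev_Finite_Reducible}), producing a witness in $\langle S\rangle\setminus\langle X\rangle$ and transporting it through the isomorphism of $\pi_1(\SqComplex_A)$ and $\pi_1(\SqComplex_{\dual A})$ noted after Figure~\ref{fig_arrows_dual_inverse}. The subtlest step is the coset-action computation: choosing $m=|Q|$ so as to simultaneously kill $\phi(g_m)^{m}$ and leave $\phi(h_m)$ inside $\phi(\langle S\rangle)$, which is what makes the identity $\phi(y^{-1}x)=\phi(h_m)$ fall out cleanly.
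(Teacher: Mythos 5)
Your proof is correct and follows essentially the same route as the paper's: part~(1) via Corollary~\ref{cor_Birev_Finite_Reducible}, and part~(2) by taking $(x,y)\in\bigcap_{m}P_m$ from Lemma~\ref{lemma_P_m_nonempty} and annihilating $g_m^{m}$ in a finite quotient so that the witness $y^{-1}x$ lands in every finite-index subgroup containing $\langle S\rangle$ while lying outside $\langle S\rangle$ by the exact factorization. The only cosmetic differences are that the paper passes to a finite-index \emph{normal} subgroup $N$ and takes $m$ equal to its index (writing $x^{-1}y$ as an element of $\langle S\rangle N$ directly) instead of using the coset action into $\Sym(\pi_1(\SqComplex_A)/N)$, and that it leaves the dual $\langle X\rangle$-case implicit.
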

\begin{proof}
The first item follows from Corollary~\ref{cor_Birev_Finite_Reducible}.

Let $G_A$ be infinite. If $\pi_1(\SqComplex_A)$ is $\langle S\rangle$-separable, then for each $g\in \pi_1(\SqComplex_A)\setminus \langle S\rangle$ there exists a subgroup $H<\pi_1(\SqComplex_A)$ of finite index such that $\langle S\rangle<H$ and $g\notin H$. Then there exists a normal subgroup $N\lhd \pi_1(\SqComplex_A)$ of finite index such that $g\notin \langle S\rangle N$.

Let $(x,y)\in \cap_{m \in \mathbb{N}} P_m$. We are going to prove that the element $x^{-1}y$ of the group
$\pi_1(\SqComplex_A)$ belongs to $\langle S\rangle N$ for every normal subgroup $N$ of finite index in $\pi_1(\SqComplex_A)$. Since $x^{-1}y\not\in \langle S\rangle$ by Theorem~\ref{prop_F_A_properties}, it will follow that $\pi_1(\SqComplex_A)$ is not $\langle S\rangle$-separable.

Let $N<\pi_1(\SqComplex_A)$ be a normal subgroup of index $n$. Then for every $g\in \langle S\rangle$ the element $g^n$
belongs to $N$. Since $(x,y)\in P_n$, there exist words
$g,h\in S^{*}\subset\langle S\rangle$ and $u\in X^{*}$ such that $g^nux=uyh$ in $\pi_1(\SqComplex_A)$. Therefore
\[
x^{-1}y=h^{-1}(hx^{-1}u^{-1}g^nuxh^{-1})\in \langle S\rangle N,
\]
which completes the proof.
\end{proof}


\begin{corollary}
Let $A=(X,S,\lambda)$ be a bireversible automaton. The double $D_A=\pi_1(\SqComplex_A)*_{S}\pi_1(\SqComplex_A)$ of $\pi_1(\SqComplex_A)$ along $\langle S\rangle$ is residually finite if and only if $G_A$ is finite.
\end{corollary}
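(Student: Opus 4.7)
My plan is to prove the two directions using results already established in the excerpt.

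For the \emph{only if} direction, I will apply Corollary~\ref{cor_Wise_double} directly: if $D_A = \pi_1(\SqComplex_A) *_{\langle S\rangle} \pi_1(\SqComplex_A)$ is residually finite, then $\pi_1(\SqComplex_A)$ is $\langle S\rangle$-separable. The contrapositive of Theorem~\ref{thm_infinite_separable}(2) then forces $G_A$ to be finite.

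For the \emph{if} direction, my strategy is to exhibit $D_A$ itself as the fundamental group of a square complex associated to a new bireversible automaton, so that Theorem~\ref{thm_infinite_separable}(1) applies. Let $X_1$ and $X_2$ be two disjoint copies of $X$, and define $A' = (S, X_1 \sqcup X_2, \lambda')$ by letting each state $s \in S$ act on letters of $X_i$ via the same transitions that $s$ performs on $X$ in $A$. Three verifications are needed. First, $A'$ inherits bireversibility from $A$ coordinate-wise. Second, $\SqComplex_{A'}$ is naturally isomorphic to the double of $\SqComplex_A$ along the rose of vertical $S$-loops: both have a single vertex, $|S|$ vertical loops, $2|X|$ horizontal loops, and exactly two copies of the squares of $\SqComplex_A$ as $2$-cells; since the rose of $S$-loops is $\pi_1$-injective by Theorem~\ref{prop_F_A_properties}(2), the topological double indeed has fundamental group $D_A$. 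Third, $G_{A'} \cong G_A$: each state of $A'$ preserves the type of each letter, so its restriction to $X_i^*$ is an isomorphic copy of its action on $X^*$, and hence the kernel of the action of $F_S$ on $(X_1 \sqcup X_2)^*$ coincides with the kernel of its action on $X^*$.

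With these three points in place, the \emph{if} direction is immediate: assuming $G_A$ is finite, $G_{A'}$ is finite too, and Theorem~\ref{thm_infinite_separable}(1) applied to $A'$ yields that $D_A \cong \pi_1(\SqComplex_{A'})$ is virtually a direct product of two free groups, hence residually finite. None of the three steps should present a serious obstacle---they are cell-counting and bookkeeping about the action---so the real content of the corollary is already packaged inside Theorem~\ref{thm_infinite_separable} and Wise's Corollary~\ref{cor_Wise_double}; the only conceptual input is the observation that doubling a CSC along its vertical rose produces another CSC whose automaton is $A \sqcup A$.
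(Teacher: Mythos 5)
Your proof is correct and follows essentially the same route as the paper: the only-if direction is exactly the paper's (Theorem~\ref{thm_infinite_separable}(2) combined with Corollary~\ref{cor_Wise_double}), and for the if direction the paper likewise realizes $D_A$ as the fundamental group of the square complex of a doubled bireversible automaton with the same automaton group and then applies Theorem~\ref{thm_infinite_separable}(1). The only (cosmetic) difference is the choice of doubled automaton: the paper uses $A\cup\dual\inv\dual(A)$ over the alphabet $X\cup X^{-1}$, whereas you use $A\sqcup A$ over two disjoint copies of $X$; both yield $D_A$ and have automaton group isomorphic to $G_A$.
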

\begin{proof}
If $G_A$ is infinite, then $D_A$ is not residually finite by Theorem~\ref{thm_infinite_separable} and
Corollary~\ref{cor_Wise_double}.

If $G_A$ is finite, then $G_{A\cup\dual\inv\dual(A)}\cong G_{A}$ is finite. The group $D_A$ is the
fundamental group of the complex $\SqComplex_{A\cup \dual\inv\dual(A)}$, which is residually finite by
Theorem~\ref{thm_infinite_separable}.
\end{proof}

\begin{remark}
It follows from the proof of Theorem~\ref{thm_Wise_Fix_separable} that for each $(x,y)\in \cap_{m \in
\mathbb{N}} P_m$ the nontrivial element $(x^{-1}y)^{-1}\phi(x^{-1}y)=y^{-1}x\overline{x}^{-1}\overline{y}$ of
$\pi_1(\SqComplex_A)*_{S}\pi_1(\SqComplex_A)$ belongs to the intersection of finite index subgroups.
\end{remark}

\begin{remark}
Theorem~\ref{thm_infinite_separable} holds for the fundamental group of non-directed complete $\VH$ complexes $\SqComplex$ with one vertex and the associated bireversible automata, because the element $x^{-1}y$ from the proof remains nontrivial in $\pi_1(\SqComplex)$.
\end{remark}

The above theorem does not tell us when the group $\pi_1(\SqComplex_A)$ is residually finite, and the next question seems to be difficult:

\begin{question}
For which bireversible automata $A$ is the group $\pi_1(\SqComplex_A)$ residually finite?
\end{question}

We answer this question in some cases.

\begin{theorem}\label{thm_2state_NRF}
Let $A=(X,S,\lambda)$ be a bireversible automaton over an alphabet with two letters or with two states. If $G_A$ is
infinite, then $\pi_1(\SqComplex_A)$ is non-residually finite.
\end{theorem}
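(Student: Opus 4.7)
The plan is to derive non-residual-finiteness from Wise's fixed-subgroup criterion (Theorem~\ref{thm_Wise_Fix_separable}). By Theorem~\ref{thm_infinite_separable}, the infiniteness of $G_A$ already guarantees that $\langle S\rangle$ and $\langle X\rangle$ are not separable in $G:=\pi_1(\SqComplex_A)$. Hence it suffices to construct an endomorphism $\phi$ of $G$ whose fixed subgroup equals $\langle S\rangle$: residual finiteness of $G$ would then force $Fix(\phi)$ to be separable by Theorem~\ref{thm_Wise_Fix_separable}, contradicting Theorem~\ref{thm_infinite_separable}. By Corollary~\ref{cor_Birev_Finite_Reducible}, $G_A$ is infinite iff $G_{\dual A}$ is infinite, so one may freely pass to $\dual A$; the two cases $|S|=2$ and $|X|=2$ are therefore interchangeable, and I will concentrate on $|X|=2$.

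The endomorphism will be built from a compatible substitution of the alphabet: reduced words $u_x\in F_X$, one for each $x\in X$, satisfying that for every arrow $s\xrightarrow{x|y}t$ of $A$ one has the transition $s\xrightarrow{u_x|u_y}t$ in the extended automaton $\leftidx{^{\pm*}}{A}{^{\pm*}}$. By the correspondence (\ref{eqn_relation_transition2}), this is equivalent to the relation $su_x=u_yt$ in $G$ for every defining relation $sx=yt$; hence the assignment $\phi(s)=s$ on states and $\phi(x)=u_x$ on letters respects every relation and extends to an endomorphism of $G$. Geometrically, the data $\{u_x\}_{x\in X}$ realizes $A$ as a subautomaton of the power automaton via the identity on $S$ and a nontrivial substitution on $X$ --- the self-similarity phenomenon referred to in the introduction.

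To produce such words $u_x$ when $|X|=2$, I would exploit the strongly constrained local structure of two-letter bireversible automata together with the $4$-way determinism of the Wang tileset $W_A$. The standard sequence of stabilizers $G_A=\St_{G_A}(0)\geq\St_{G_A}(1)\geq\cdots$ produces, for suitable $n$, a finite-index subgroup whose action on $X^n$ has return words; a pigeonhole argument on the orbit of the finite configuration $(s,x)_{s\in S,\,x\in X}$ in the power automaton $A^n$, combined with the $4$-way deterministic bookkeeping forced by $W_A$, then yields reduced words $u_0,u_1\in F_X$ compatible with every arrow of $A$. The two-letter assumption is essential here: the number of cases stays small enough that a simultaneous return loop exists for all states and is compatible with both letters, whereas for bireversible automata over larger alphabets such a substitution need not exist. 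The case $|S|=2$ is handled by the same argument applied to $\dual A$.

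Finally, $Fix(\phi)=\langle S\rangle$ is identified via the normal form of Theorem~\ref{prop_F_A_properties}: writing $\gamma=gv$ with $g\in\langle S\rangle$ and $v\in\langle X\rangle$, one has $\phi(\gamma)=g\phi(v)$, so $\gamma$ is fixed iff $v\in Fix(\phi|_{F_X})$. That $Fix(\phi|_{F_X})=\{e\}$ is built into the construction by choosing each $u_x$ to be reduced of length at least $2$ with no cancellation between $u_x$ and $u_{x'}^{\pm 1}$ for $x'\neq x^{\pm 1}$, so that $|\phi^k(v)|\to\infty$ for every nontrivial reduced $v$ and no nontrivial word can be fixed. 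The principal obstacle is the middle step: showing that the two-letter (or two-state) bireversible hypothesis is strong enough to produce a self-similar embedding of $A$ into its power automaton, a phenomenon which need not occur for general bireversible automata.
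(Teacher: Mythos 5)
Your reduction is exactly the paper's: combine Theorem~\ref{thm_Wise_Fix_separable} with Theorem~\ref{thm_infinite_separable} by exhibiting an endomorphism $\phi$ of $\pi_1(\SqComplex_A)$ that is the identity on $S$ and has $Fix(\phi)=\langle S\rangle$, and handle the two-state case by passing to the dual. The identification of $Fix(\phi)$ via the normal form and a length/no-cancellation argument is also correct and matches the paper. But the step you yourself flag as ``the principal obstacle'' --- actually producing the substitution words $u_x$ compatible with every arrow --- is the entire content of the theorem, and your sketch of it (stabilizer subgroups, return words, a pigeonhole on the configuration $(s,x)_{s\in S,\,x\in X}$ in a power automaton) does not converge to a construction. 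As written, the proof has a genuine gap precisely where the work is.

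The missing idea is much simpler than what you are reaching for, and it is worth seeing why the binary alphabet makes it so. Since $|X|=2$, each state acts on $\{x,y\}$ either trivially or by the swap, so every defining relation has the form $sx=xt,\ sy=yt'$ (with $t,t'$ stabilizing $x^{-1},y^{-1}$) or $sx=yt,\ sy=xt'$ (with $t,t'$ not doing so). Consequently the right action of $F_X$ on $S^{\pm1}$ sends each $t$ under $y^{-1}x$ and $x^{-1}y$ back into $S^{\pm1}$, with the commutation pattern $t(y^{-1}x)=(y^{-1}x)t_1$ or $t(y^{-1}x)=(x^{-1}y)t_1$ according to the dichotomy. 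Taking $n$ to be the order of the permutation that $y^{-1}x$ induces on the finite set $S\cup S^{-1}$ (ordinary pigeonhole, nothing about stabilizers or $4$-way determinism is needed), one gets $t(y^{-1}x)^{\pm n}=(y^{-1}x)^{\pm n}t$ or $=(x^{-1}y)^{\pm n}t$, and then
\[
u_x=x(y^{-1}x)^n,\qquad u_y=y(x^{-1}y)^n
\]
is verified directly to satisfy $su_x=u_yt$ for every relation $sx=yt$ (and likewise in the non-swapping case). Since $\phi(z)$ begins and ends with $z$ for each $z\in\{x^{\pm1},y^{\pm1}\}$, images of reduced words are reduced and strictly longer, giving $Fix(\phi)=\langle S\rangle$ as you argued. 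So your architecture is right, but without this explicit construction the proposal does not constitute a proof; and your stated worry that such a substitution ``need not exist'' for general bireversible automata is exactly why the construction, not the reduction, is the theorem.
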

\begin{proof}
We can assume that $X=\{x,y\}$. There exists $n\in\mathbb{N}$ such that $(y^{-1}x)^n$ and $(x^{-1}y)^n$ act trivially on $S\cup S^{-1}$, i.e., $s(y^{-1}x)^{\pm n}s^{-1}\in\langle X\rangle$ for every $s\in S\cup S^{-1}$. Let us show that the map
\[
\phi(x)=x(y^{-1}x)^n, \ \phi(y)=y(x^{-1}y)^n \ \mbox{ and } \ \phi(s)=s \ \mbox{for all $s\in S$}
\]
extends to an endomorphism of $\pi_1(\SqComplex_A)$.

Since our alphabet is binary, a state $s\in S$ stabilizes $x$ if and only if it stabilizes $y$. Let $S^{+}$ be the set of all states that stabilize $x$ and $y$, and let $S^{-}$ be the set of all states that stabilize $x^{-1}$ and $y^{-1}$. Every defining relation for $\pi_1(\SqComplex_A)$ of the form $sx=xt$ or $sy=yt$ implies that $s\in S^{+}$ and $t\in S^{-}$, while a relation of the form $sx=yt$ or $sy=xt$ is possible only when $s\not\in S^{+}$ and $t\not\in S^{-}$. It follows that every $t\in S^{-}$ satisfies the relations $ty^{-1}x=y^{-1}xt_1$ and $tx^{-1}y=x^{-1}yt_2$ for some $t_1,t_2\in S^{-}$, while every $t\not\in S^{-}$ satisfies the relations $ty^{-1}x=x^{-1}yt_1$ and $tx^{-1}y=y^{-1}xt_2$ for some $t_1,t_2\not\in S^{-}$. Since $(y^{-1}x)^{\pm n}$ act trivially on $S$, we have relations
\begin{align*}
&t(y^{-1}x)^{\pm n}=(y^{-1}x)^{\pm n}t \ \mbox{ for every $t\in S^{-}$},\\
&t(y^{-1}x)^{\pm n}=(x^{-1}y)^{\pm n}t \ \mbox{ for every $t\not\in S^{-}$}.
\end{align*}
Now for every defining relation of the form $sx=xt$ and $sy=yt'$ (here $s\in S^{+}$ and $t,t'\in S^{-}$), there are relations
\[
sx(y^{-1}x)^n=xt(y^{-1}x)^n=x(y^{-1}x)^n t \ \mbox{ and } \ sy(x^{-1}y)^n=yt'(x^{-1}y)^n=y(x^{-1}y)^n t'.
\]
For defining relations of the form $sx=yt$ or $sy=xt'$ (here $s\not\in S^{+}$ and $t,t'\not\in S^{-}$),
there are relations
\[
sx(y^{-1}x)^n=yt(y^{-1}x)^n=y(x^{-1}y)^n t \ \mbox{ and } \ sy(x^{-1}y)^n=xt'(x^{-1}y)^n=x(y^{-1}x)^n t'.
\]
Thus, $\phi$ preserves all the defining relations of $\pi_1(\SqComplex_A)$ and induces an endomorphism of $\pi_1(\SqComplex_A)$.

Since $\phi(z)$ begins and ends on $z$ for each $z\in\{x,y,x^{-1},y^{-1}\}$, the image $\phi(w)$ of a reduced word $w\in\langle X\rangle$ is also reduced. The length of $\phi(w)$ is greater than the length of $w$; hence, no element in $\langle X\rangle$ can be fixed by $\phi$. Therefore, $Fix(\phi)=\langle S\rangle$ and $\pi_1(\SqComplex_A)$ is non-residually finite by Theorems~\ref{thm_Wise_Fix_separable} and \ref{thm_infinite_separable}.
\end{proof}

\begin{remark}
The element $(x^{-1}y)^{-1}\phi(x^{-1}y)=(x^{-1}y)^{-1}(x(y^{-1}x)^n)^{-1}(y(x^{-1}y)^n)=(x^{-1}y)^{2n}$ belongs to the intersection of finite index subgroups of $\pi_1(\SqComplex_A)$. The normal closure of $(x^{-1}y)^n$ and $(y^{-1}x)^n$ has infinite index in $\pi_1(\SqComplex_A)$.
\end{remark}

\begin{corollary}
Let $A=(X,S,\lambda)$ be a bireversible automaton. Suppose that each connected component of $A$ consists of two states and the automaton group $G_A$ is infinite. Then $\pi_1(\SqComplex_A)$ is non-residually finite.
\end{corollary}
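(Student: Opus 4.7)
The plan is to reduce the claim to Theorem~\ref{thm_2state_NRF} applied component-wise, combining local endomorphisms into a single global endomorphism of $\pi_1(\SqComplex_A)$. Specifically, I would construct $\phi\colon \pi_1(\SqComplex_A) \to \pi_1(\SqComplex_A)$ with $Fix(\phi) = \langle X \rangle$, and close the argument exactly as in Theorem~\ref{thm_2state_NRF}: if $\pi_1(\SqComplex_A)$ were residually finite, Theorem~\ref{thm_Wise_Fix_separable} would force it to be $\langle X \rangle$-separable, contradicting Theorem~\ref{thm_infinite_separable} since $G_A$ is infinite.

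List the connected components of $A$ as $C_1, \ldots, C_k$ with $C_i = \{s_i, s_i'\}$. For each $i$, the component $C_i$ together with the alphabet $X$ forms a $2$-state bireversible subautomaton $A_i$ of $A$. Applying the $2$-state case of Theorem~\ref{thm_2state_NRF} to $A_i$ (derivable from the binary-alphabet case by passing to $\dual A_i$ and using the identity-on-generators isomorphism $\pi_1(\SqComplex_{A_i}) \cong \pi_1(\SqComplex_{\dual A_i})$ already discussed in Section~\ref{sect_Automata_SqComplexes}), I would extract an endomorphism $\phi_i$ of $\pi_1(\SqComplex_{A_i})$ of the form
\[
\phi_i(s_i) = s_i \bigl((s_i')^{-1} s_i\bigr)^{n_i}, \quad \phi_i(s_i') = s_i' \bigl(s_i^{-1} s_i'\bigr)^{n_i}, \quad \phi_i(x) = x \text{ for } x \in X,
\]
for some positive integer $n_i$. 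Then I would define $\phi$ on the generators of $\pi_1(\SqComplex_A)$ by the identity on $X$ and by the $\phi_i$ formulas on the states of each $C_i$. Because the two-states-per-component hypothesis forces every defining relation $sx = yt$ of $\pi_1(\SqComplex_A)$ to have $s, t$ in a common component, the preservation of $A_i$'s relations by $\phi_i$ suffices to make $\phi$ a well-defined endomorphism of $\pi_1(\SqComplex_A)$.

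For the fixed-point analysis I would use the normal form $\gamma = uh$ with $u \in \langle X \rangle$ and $h \in \langle S \rangle$ from Theorem~\ref{prop_F_A_properties}: then $\phi(\gamma) = u\phi(h)$, so $\gamma$ is fixed iff $h$ is fixed by $\phi|_{F_S}$. Since each of $\phi(s_i^{\pm 1})$ and $\phi((s_i')^{\pm 1})$ is a reduced word in $F_S$ of length $1 + 2n_i \geq 3$ that starts and ends with the same generator, substitution into a nontrivial reduced word of $F_S$ produces no cancellation and strictly increases length; thus $\phi|_{F_S}$ fixes only the identity and $Fix(\phi) = \langle X \rangle$, which closes the plan from the first paragraph. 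The principal difficulty is the gluing step in the second paragraph: verifying that the local endomorphisms patch to a global one. This is precisely where the two-states-per-component hypothesis is used, since it eliminates cross-component defining relations and hence any compatibility obstruction between the $\phi_i$; everything else follows routinely from the machinery already developed in Section~\ref{sect_ResidualFiniteness}.
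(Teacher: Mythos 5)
Your proposal is correct and follows essentially the same route as the paper: build the endomorphism of Theorem~\ref{thm_2state_NRF} (in its two-state, dual form) on each connected component, glue the pieces along their common identity action on $X$, and conclude via Theorems~\ref{thm_Wise_Fix_separable} and \ref{thm_infinite_separable}. One cosmetic remark: the absence of cross-component defining relations is automatic from the definition of connected components (an arrow $s\xrightarrow{x|y}t$ already places $s$ and $t$ in one component), so the two-states hypothesis is needed only to make each component eligible for Theorem~\ref{thm_2state_NRF}, not to rule out mixed relations.
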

\begin{proof}
Since $G_A$ is infinite, by Theorem \ref{thm_infinite_separable} we have that for some states $a,b \in S$
(possibly in different components) the element $a^{-1}b$ is not $\langle X\rangle$-separable. By applying
arguments from the proof of Theorem~\ref{thm_2state_NRF}, we can construct a nontrivial endomorphism $\phi_i$ for
each connected component $A_i=(X,S_i,\lambda)$ of $A$ considering it as a separate automaton. By the
construction, all $\phi_i$ map elements of $X$ to themselves. Hence, we could extend them to the unique
endomorphism $\phi$ of the whole group $\pi_1(\SqComplex_A)$. Similarly, the image of a reduced word
$w\in\langle S\rangle$ is also reduced and is longer than the initial word. So $a^{-1}b$ is not fixed, and
therefore $\pi_1(\SqComplex_A)$ is non-residually finite.
\end{proof}

\begin{example}\label{ex_Bellaterra}
The smallest automata that satisfy the conditions of Theorem~\ref{thm_2state_NRF} are the Aleshin
automaton $A$ and its friendly version --- the Bellaterra automaton $B$ shown in Figure~\ref{fig_BellaterraAut}. The corresponding endomorphism of $\pi_1(\SqComplex_A)$ for the Aleshin automaton is shown in Introduction. The Bellaterra automaton possesses an additional endomorphism. The states $\{a^{-1},b^{-1},c^{-1}\}$ of $B^{*}$ span a subautomaton
isomorphic to $B$. Hence, the map $\phi:\pi_1(\SqComplex_B)\rightarrow\pi_1(\SqComplex_B)$,
which replaces each letter from $S\cup S^{-1}$ by its inverse, is an automorphism of $\pi_1(\SqComplex_B)$
with $Fix(\phi)=\langle X\rangle$. The intersection of finite index subgroup of $\pi_1(\SqComplex_B)$ contains, for example, the elements $a^2b^{-2}$, $b^{2}c^{-2}$, $c^{2}a^{-2}$.
\end{example}

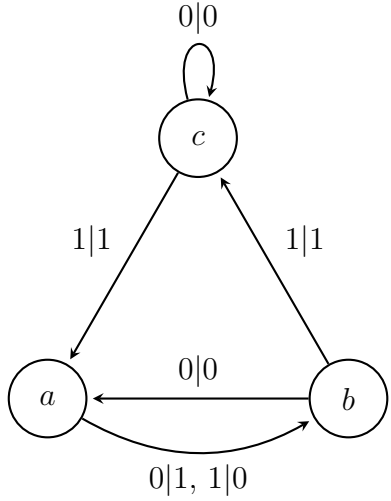
\begin{figure}
\begin{center}
{\begin{tikzpicture}[>=stealth,scale=0.5,shorten >=2pt, thick,every initial by arrow/.style={*->}]
  \node[state] (a) at (0, 0)   {$a$};
  \node[state] (b) at +(0: 8)  {$b$};
  \node[state] (c) at +(60: 8) {$c$};
    \path[->,scale=4]
    (b) edge  node [above]{$0|0$} (a)
    (b) edge  node [above right]{$1|1$} (c)
    (a) edge  [bend right] node [below]{$0|1$, $1|0$} (b)
    (c) edge  node [above left]{$1|1$} (a)
    (c) edge  [loop above] node {$0|0$} (c);
\end{tikzpicture}}
\end{center}
\caption{The Bellaterra automaton $B$ generating $G_B\cong C_2*C_2*C_2$}
\label{fig_BellaterraAut}
\end{figure}

It is an open question whether the finiteness problem for automaton groups is algorithmically solvable.
Probably not, because it is unsolvable for automaton semigroups (see \cite{Gill:SemiGrFinitAut}). However, the finiteness problem for automaton groups generated by bireversible automata over the binary alphabet (or with two states) is algorithmically solvable (see \cite{Kli:Finit2Birev}). This fact together with Theorems~\ref{thm_infinite_separable} and \ref{thm_2state_NRF} imply that there is an algorithm which for a given complete $\VH$ complex $\SqComplex$ with one vertex and two edges in the vertical or horizontal part verifies whether its fundamental group $\pi_1(\SqComplex)$ is residually
finite.

\begin{corollary}\label{cor_Subautom_NRF}
Let $A$ be a connected bireversible automaton generating an infinite automaton group. If there is a nontrivial $X$-isomorphism between $A$ and a subautomaton $B$ of the automaton $A^{*}$, then $\pi_1(\SqComplex_A)$ is
non-residually finite.
\end{corollary}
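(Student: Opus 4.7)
The plan is to promote the given $X$-isomorphism $\phi: A \to B \subseteq A^{*}$ to an endomorphism $\Phi$ of $\pi_1(\SqComplex_A)$ and then invoke the Wise--Fix separability machinery, along the same lines as Theorem~\ref{thm_2state_NRF} and Example~\ref{ex_Bellaterra}. Concretely, for each $s \in S$ let $w_s \in S^{\pm *}$ be the image state $\phi(s)$, and define $\Phi(s) = w_s$ for $s \in S$ and $\Phi(x) = x$ for $x \in X$. To see that $\Phi$ respects the defining relations, note that a defining relation $sx = yt$ encodes the arrow $s \xrightarrow{x|y} t$ in $A$; since $\phi$ is an $X$-isomorphism, $B \subseteq A^{*}$ contains the arrow $w_s \xrightarrow{x|y} w_t$, which by the transition-relation correspondence~(\ref{eqn_relation_transition2}) translates into $w_s x = y w_t$ in $\pi_1(\SqComplex_A)$, that is, $\Phi(s)\Phi(x) = \Phi(y)\Phi(t)$.

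Next I would identify $Fix(\Phi)$. By construction $\langle X \rangle \subseteq Fix(\Phi)$, and $\Phi$ preserves both factors of the exact factorization $\pi_1(\SqComplex_A) = \langle S \rangle \cdot \langle X \rangle$ of Theorem~\ref{prop_F_A_properties}. Writing $\gamma = gv$ in normal form with $g \in \langle S \rangle$ and $v \in \langle X \rangle$, we get $\Phi(\gamma) = \Phi(g)v$, so $\gamma \in Fix(\Phi)$ if and only if $g \in Fix(\Phi|_{F_S})$. Hence $Fix(\Phi) = Fix(\Phi|_{F_S}) \cdot \langle X \rangle$, and the problem reduces to checking that the endomorphism $s \mapsto w_s$ of the free group $F_S$ has trivial fixed subgroup. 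Granting this, residual finiteness of $\pi_1(\SqComplex_A)$ would force $\langle X \rangle$-separability via Theorem~\ref{thm_Wise_Fix_separable}, contradicting Theorem~\ref{thm_infinite_separable} (since $G_A$ is infinite), which finishes the proof.

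The main technical obstacle is therefore $Fix(\Phi|_{F_S}) = \{1\}$ from the sole hypothesis that the $X$-isomorphism is nontrivial. Two combinatorial mechanisms from the paper's prototypes should guide the generic argument. In the Bellaterra case of Example~\ref{ex_Bellaterra} every $w_s = s^{-1}$, so $\Phi$ sign-flips each letter of a reduced word; the image is reduced and uniqueness of reduced forms forces it to coincide with the original only when the word is empty. In the Aleshin-style endomorphism described in the Introduction each $w_z$ begins and ends with $z$ itself, so no cancellation occurs in $\Phi(z_1^{\epsilon_1}\cdots z_k^{\epsilon_k}) = w_{z_1}^{\epsilon_1}\cdots w_{z_k}^{\epsilon_k}$ and $\Phi$ is strictly length-increasing, excluding non-trivial fixed elements. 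For the general corollary one carries out the analogous no-total-cancellation analysis of the words $\{w_s\}$, using that $B$ is a proper subautomaton of $A^{*}$ (from nontriviality) and that $A$ is connected, to prevent any non-trivial reduced word in $F_S$ from satisfying $w_{s_1}^{\epsilon_1}\cdots w_{s_k}^{\epsilon_k} = s_1^{\epsilon_1}\cdots s_k^{\epsilon_k}$ after reduction. With this in hand, Wise's argument yields, for any $\gamma \notin Fix(\Phi)$, an explicit element $\gamma^{-1}\Phi(\gamma)$ in the intersection of all finite-index subgroups of $\pi_1(\SqComplex_A)$, generalizing the elements $a^2b^{-2}$, $b^2c^{-2}$, $c^2a^{-2}$ exhibited in Example~\ref{ex_Bellaterra}.
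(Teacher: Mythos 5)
Your first two steps (extending the $X$-isomorphism to an endomorphism $\Phi$ with $\Phi|_X=\mathrm{id}$ via the correspondence~(\ref{eqn_relation_transition2}), and the normal-form reduction $Fix(\Phi)=Fix(\Phi|_{F_S})\cdot\langle X\rangle$) agree with the paper. But the step you yourself flag as ``the main technical obstacle'' is a genuine gap, and the mechanism you propose to close it does not work. The images $w_s=\phi(s)$ are \emph{positive} words in $S^{*}$, all of the same length $n$ (states of the connected subautomaton $B$), so there is no analogue of the Aleshin/Bellaterra phenomena you cite: $\Phi$ is not letter-by-letter cancellation-free, and $w_{s_i}^{-\epsilon_i}w_{s_{i+1}}^{\epsilon_{i+1}}$ can cancel massively whenever two image words share a prefix or a suffix. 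Concretely, if $w_a$ and $w_b$ had a common suffix of length $n-1$, then $\Phi(ab^{-1})=w_aw_b^{-1}=ab^{-1}$ and $Fix(\Phi|_{F_S})$ would be nontrivial; nothing in your argument rules this out, and in fact the full claim $Fix(\Phi|_{F_S})=\{1\}$ is neither established nor needed.

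The paper avoids this entirely: since $Fix(\phi)\supseteq\langle X\rangle$, the separability argument of Theorem~\ref{thm_infinite_separable} already gives a specific pair of distinct states $a,b$ with $a^{-1}b\in Fix(\phi)N$ for every finite-index normal $N$, so it suffices to show that this \emph{one} element $a^{-1}b$ (inverse letter on the left --- the order matters, as the suffix example shows) is not fixed. This is done by a counting argument you are missing: if $\phi(a)^{-1}\phi(b)$ reduces to $a^{-1}b$ with $|\phi(a)|=|\phi(b)|=n$, then $\phi(a)=wa$ and $\phi(b)=wb$ for a common prefix $w$, which is nonempty by nontriviality of $\phi$; connectivity of $A$ then transports the two states $wa,wb$ of $B$ along paths from the first letter of $w$ to every state $t$ of $A$, producing at least two states of $B$ beginning with each $t$ and contradicting $\#B=\#A$. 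Without this (or some substitute) your proof does not go through; and your closing assertion that $\gamma^{-1}\Phi(\gamma)$ lies in the intersection of all finite-index subgroups for \emph{any} $\gamma\notin Fix(\Phi)$ overstates Theorem~\ref{thm_Wise_Fix_separable}, which yields this only for the inseparable witness elements.
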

\begin{proof}
The isomorphism between $A$ and $B$ extends to an endomorphism $\phi:\pi_1(\SqComplex_A)\rightarrow \pi_1(\SqComplex_B)$ with fixed subgroup $Fix(\phi)$ containing $\langle X\rangle$. By repeating the arguments from the proof of
Theorem~\ref{thm_infinite_separable}, one can show that there is a pair of states $a,b$ in $A$ such that
$a^{-1}b\in Fix(\phi)N$ for every normal subgroup $N\lhd F_A$ of finite index. We just need to prove that
$a^{-1}b\not\in Fix(\phi)$. This will imply that $\pi_1(\SqComplex_A)$ is not $Fix(\phi)$-separable and, therefore, it is not residually finite by Theorem~\ref{thm_Wise_Fix_separable}.

Let $\phi(a)=v$ and $\phi(b)=u$. Since $B$ is connected, all words representing the states of $B$ have the
same length, in particular, $|v|=|u|$. Therefore, the equality $\phi(a^{-1}b)=v^{-1}u=a^{-1}b$ is possible
only when $v$ and $u$ have a common beginning $w$ so that $v=wa$ and $u=wb$. Since $\phi$ is nontrivial,
the word $w$ is nonempty. The first letter $s$ of $w$ is a state of $A$ or $A^{-1}$, and we can
assume $s\in A$. For any state $t$ of $A$ there is a path from $s$ to $t$, which induces two paths from
$wa$ and $wb$ to two different words which start with the letter $t$. It follows that for each state of
$A$ there are at least two states of $B$. This contradiction completes the proof.
\end{proof}

An automaton is minimal if different states define different transformations of words.

\begin{corollary}
Let $A$ be a non-minimal connected bireversible automaton generating an infinite automaton group. Then $\pi_1(\SqComplex_A)$ is non-residually finite.
\end{corollary}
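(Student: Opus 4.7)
The plan is to reduce the statement to Corollary~\ref{cor_Subautom_NRF}: it suffices to exhibit a nontrivial $X$-isomorphism between $A$ and some subautomaton $B$ of the extended automaton $\leftidx{^{\pm*}}{A}{^*}$, because then the hypothesis that $G_A$ is infinite together with that corollary immediately yields that $\pi_1(\SqComplex_A)$ is non-residually finite.

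First, I would extract the structural consequence of non-minimality. By assumption there are distinct states $s_1,s_2\in S$ defining the same transformation of $X^{*}$, so $c:=s_1s_2^{-1}$ is a nontrivial element of the kernel $K=\ker(F_S\rightarrow G_A)$; by Theorem~\ref{thm_G_A_and_pi_1}, $K$ is a normal subgroup of $\pi_1(\SqComplex_A)$. Next, using that $A$ is connected and bireversible, the letter-reading maps $\tau_x:S\rightarrow S$ are bijections (bireversibility) that preserve the Nerode equivalence partition of $S$ and act transitively on $S$ (connectedness). From this I would conclude that all Nerode classes have the same cardinality $k\geq 2$, so $A$ may be viewed as a connected $k$-sheeted covering of its minimal quotient $A'$ in the category of bireversible automata.

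The heart of the proof is the construction of the isomorphism. The plan is to produce, for each $s\in S$, a word $v_s\in F_S$ satisfying four conditions: (i) $v_s$ and $s$ act identically on $X^{*}$ (equivalently, $v_ss^{-1}\in K$); (ii) the set $\{v_s:s\in S\}$ is closed under the transitions of $\leftidx{^{\pm*}}{A}{^*}$ for inputs in $X$; (iii) for every transition $s\xrightarrow{x|y}t$ in $A$ one has $v_s\xrightarrow{x|y}v_t$ in $\leftidx{^{\pm*}}{A}{^*}$; (iv) $v_s\neq s$ for at least one $s$. The natural candidate is $v_s=c_s\cdot s$, where $c_s\in K$ is a ``parallel-transport'' term defined by fixing a spanning tree in $A'$ and lifting tree paths from a base point $s_0$, choosing the lift at $s_0$ to be twisted by $c$ so that $c_{s_0}=c\neq e$. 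For edges of $A'$ lying in the spanning tree, condition (iii) holds by construction; for the remaining edges one needs to exploit the normality of $K$ together with the $4$-way determinism of the bireversible tileset $W_A$ to verify that the monodromy of the covering $A\rightarrow A'$ propagates consistently along all paths.

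The main obstacle is exactly condition (ii)--(iii) on the non-tree edges of $A'$: the ``derivative'' of $c_s$ under the right action of $F_X$ on states of $\leftidx{^{\pm*}}{A}{^*}$ need not return the element required by condition (iii) on the nose. To handle this I would, if necessary, enlarge the candidate state set to a finite $F_X$-orbit inside $\leftidx{^{\pm*}}{A}{^*}$, then cut out a subset of cardinality $|S|$ on which the transition structure matches $A$; finiteness of the orbit is guaranteed by the finiteness of $A$ and the $F_X$-invariance of $K$. Once $B$ and the nontrivial $X$-isomorphism $A\rightarrow B$ are in hand, Corollary~\ref{cor_Subautom_NRF} applies and gives the conclusion.
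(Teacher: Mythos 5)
Your reduction to Corollary~\ref{cor_Subautom_NRF} is certainly the intended route: the paper states this corollary immediately after that result with no separate proof, so the entire content of the statement is the existence of a nontrivial $X$-isomorphism from $A$ onto a subautomaton of $A^{*}$. Your preparatory observations are all correct: $c=s_1s_2^{-1}$ is a nontrivial element of the kernel $K$, $K$ is normal in $\pi_1(\SqComplex_A)$ by Theorem~\ref{thm_G_A_and_pi_1}, the maps $s\mapsto (s)x$ are bijections preserving the Nerode partition, and connectedness forces all classes to have a common size $k\geq 2$.

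The gap is exactly where you locate it, and your proposed repair does not close it. Writing $v_s=c_s s$ with $c_s\in K$, the closure condition under sections reads $c_{(s)x}=(c_s)(s(x))$; since elements of $K$ act trivially on $\langle X\rangle$, the right action of $F_X$ on $K$ is conjugation, so the condition amounts to $c_{(s)u}=\bigl(s(u)\bigr)^{-1}c_s\,s(u)$. A spanning-tree choice satisfies this on tree edges by fiat, but consistency on the remaining edges requires $c$ to commute with $s_0(u)$ for every $u\in F_X$ stabilizing $s_0$ --- a genuine condition on the monodromy of the covering $A\rightarrow A'$ that need not hold. Your fallback (``enlarge to a finite $F_X$-orbit, then cut out a subset of cardinality $|S|$ on which the transition structure matches $A$'') is the same problem restated: such an orbit maps onto $S$ equivariantly with constant fibre size $m$, and cutting out a copy of $A$ means finding an equivariant section, which exists precisely when $m=1$, i.e., precisely when the commutation condition already holds. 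The one case that is genuinely automatic is $k=2$: there, sending each state to the other element of its Nerode class is forced, by injectivity of $s\mapsto(s)x$, to be a fixed-point-free nontrivial $X$-automorphism of $A$ itself, and Corollary~\ref{cor_Subautom_NRF} applies at once. For $k\geq 3$ one needs an additional idea --- for instance a nontrivial element of $K$ fixed by conjugation by the relevant finite-index subgroup of $F_X$, or a replacement for $v_s$ that is insensitive to the ordering of a Nerode class --- and neither your proposal nor the paper supplies one, so as it stands your argument establishes the corollary only in the case $k=2$.
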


The first automaton for which we cannot apply Corollary~\ref{cor_Subautom_NRF} is the self-dual automaton $A$ with $3$ states over a $3$-letter alphabet shown in Figure~\ref{fig_AutomatonZ3wrZ}. One can show that there is no subautomaton in $A^{*}$ that is $X$-isomorphic to $A$. We do not know whether its $\pi_1(\SqComplex_A)$ is residually finite.

\section{Complete square complexes with four squares}\label{sect_CSC_4squares}

Every automaton group $G_A$ possesses the following self-similarity property: for every $g\in G_A$ and $v\in X^{*}$ there exists a unique element $h\in G_A$ such that $g(vw)=g(v)h(w)$ for all $w\in X^{*}$. In the proof below we use notation $h=g|_v$.

\begin{theorem}\label{thm_SqComplex_D}
Let $\SqComplex_D$ be the square complex given by the four squares in Figure~\ref{fig_complex_BM44_D4} and
$A$ be the associated bireversible automaton. Then:
\begin{enumerate}
  \item $\pi_1(\SqComplex_D)$ is non-residually finite;
  \item $G_A\cong C_3*C_3$ \ and \ $G_{\dual A}\cong F_2$.
\end{enumerate}
\end{theorem}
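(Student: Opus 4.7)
The plan is to prove part~(2) first and then deduce part~(1) via the machinery of Section~\ref{sect_ResidualFiniteness}. Reading the wreath recursion off Figure~\ref{fig_complex_BM44_D4}, the first-level action of $a$ on $X^{\pm 1}=\{x,y,x^{-1},y^{-1}\}$ is the $3$-cycle $(x\,y\,y^{-1})$ fixing $x^{-1}$, with sections $a|_v\in\{b,b^{-1}\}$; symmetrically $b$ acts as a $3$-cycle on $\{y,x^{-1},y^{-1}\}$ fixing $x$, with sections $b|_v\in\{a,a^{-1}\}$. A simultaneous induction on word length then gives $a^3=b^3=1$ in $G_A$: on the first level both cubes act trivially, while $(a^3)|_v\in\{b^{\pm 3}\}$ and $(b^3)|_v\in\{a^{\pm 3}\}$ vanish by induction. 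Hence there is a surjection $\pi:C_3*C_3\twoheadrightarrow G_A$, and to verify injectivity I would exhibit two elements $u,v\in G_A$ that generate a free subgroup of rank two, chosen to lift to free generators of the derived subgroup $[C_3*C_3,C_3*C_3]\cong F_2$ --- for example $u=[a,b]$ and $v=[a,bab^{-1}]$ --- via a ping-pong argument on the tree $X^{\pm *}$; combined with the fact that $a,b$ still have order exactly $3$ in $G_A$, this forces $\pi$ to be an isomorphism. The ping-pong step is the main obstacle: the self-similar recursion, while perfect for torsion relations, does not by itself rule out a deeper relation inside the free derived subgroup, so a hands-on verification on specific sections (likely with computer assistance via \textrm{AutomGrp}) seems unavoidable.

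For the dual, $x$ and $y$ are states of $\dual A$ acting on $(S\cup S^{-1})^{*}$. I would show they freely generate by the analogous self-similar argument: compute their wreath recursion over the letter alphabet $S\cup S^{-1}$, and prove by induction on the length of a nontrivial reduced $w\in F_X$ that some section $w|_g$ is itself a shorter nontrivial reduced word, so $w$ cannot act trivially. Equivalently, apply Theorem~\ref{thm_G_A_and_pi_1} and check that the maximal normal subgroup $K_{\dual}$ of $\pi_1(\SqComplex_D)$ contained in $\langle x,y\rangle$ is trivial by inspecting normal forms.

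For part~(1), since $G_A\cong C_3*C_3$ is infinite, the non-directed version of Theorem~\ref{thm_infinite_separable} (the remark following its proof) yields that $\pi_1(\SqComplex_D)$ is not $\langle a,b\rangle$-separable. Following the template of Theorem~\ref{thm_2state_NRF}, pick $n$ so that $(y^{-1}x)^n$ and $(x^{-1}y)^n$ act trivially on $\{a,b,a^{-1},b^{-1}\}$ --- such $n$ exists because the first-level action of $G_{\dual A}$ on $S\cup S^{-1}$ factors through a finite permutation group --- and define
\[
\phi(x)=x(y^{-1}x)^n,\quad \phi(y)=y(x^{-1}y)^n,\quad \phi(a)=a,\quad \phi(b)=b.
\]
The same case analysis as in Theorem~\ref{thm_2state_NRF}, adapted to the four tiles of Figure~\ref{fig_complex_BM44_D4}, shows that $\phi$ preserves all defining square-relations and so extends to an endomorphism of $\pi_1(\SqComplex_D)$. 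Since $\phi(z)$ begins and ends with $z$ for each $z\in\{x,y,x^{-1},y^{-1}\}$, no nonempty reduced word in $\langle x,y\rangle$ is fixed, giving $Fix(\phi)=\langle a,b\rangle$. As $x^{-1}y\notin Fix(\phi)$ but by the non-separability lies in every finite-index subgroup containing $\langle a,b\rangle$, Theorem~\ref{thm_Wise_Fix_separable} applied to $\phi$ rules out residual finiteness.
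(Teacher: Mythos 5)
Your part~(1) breaks at the construction of the endomorphism. For $\phi(x)=x(y^{-1}x)^n$, $\phi(y)=y(x^{-1}y)^n$, $\phi(a)=a$, $\phi(b)=b$ to respect the defining relation $ax=yb$ one needs $a\,x(y^{-1}x)^n=y(x^{-1}y)^nb$, i.e.\ $b(y^{-1}x)^n=(x^{-1}y)^nb$ in $\pi_1(\SqComplex_D)$. But the automaton has the transitions $b\xrightarrow{y^{-1}|y}a$ and $a\xrightarrow{x|y}b$, which give the loop $b\xrightarrow{(y^{-1}x)^n|\,y^{2n}}b$, so $b(y^{-1}x)^n=y^{2n}b$; since $y^{2n}\neq(x^{-1}y)^n$ in the free group $\langle x,y\rangle$, the normal-form theorem (Theorem~\ref{prop_F_A_properties}) shows the required relation fails for every $n\geq 1$. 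The template of Theorem~\ref{thm_2state_NRF} genuinely does not adapt: it rests on the binary dichotomy that a state fixes $x$ iff it fixes $y$, whereas here $a$ acts on $X^{\pm1}$ as the $3$-cycle $(x\,y\,y^{-1})$ and sends positive letters to negative ones, so the sets $S^{+},S^{-}$ and the two displayed families of relations do not exist. The paper runs the argument in the mirror direction: one checks that $\phi(a)=a^4$, $\phi(b)=b^4$, $\phi(x)=x$, $\phi(y)=y$ \emph{is} an endomorphism (the states $a^{\pm4},b^{\pm4}$ span a subautomaton $X$-isomorphic to $A$, e.g.\ $a^4x=yb^4$), whence $Fix(\phi)=\langle x,y\rangle$, and combines this with the failure of $\langle X\rangle$-separability. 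Infiniteness of $G_A$ is obtained up front from the Janzen--Wise anti-torus via Proposition~\ref{prop_anti_infini_group}, not from part~(2) --- which matters, because part~(2) is where your second gap lies.

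In part~(2), the relations $a^3=b^3=e$ via the self-similar recursion are exactly the paper's (easy) half. But the injectivity of $C_3*C_3\twoheadrightarrow G_A$ and the freeness of $G_{\dual A}$ --- the substance of the statement --- are left as an unexecuted ping-pong ``likely with computer assistance.'' The paper's mechanism is different and complete: using the orbit structure of elements of $\St_1(G_A)$ on $X^2$ (every nontrivial orbit has three elements, all reduced continuations of a common prefix) it proves by induction that $G_A$ acts transitively on reduced words over $X$ of each length, and dually that $G_{\dual A}$ acts transitively on alternating words over $S$ of each length; by Remark~\ref{rem_nontriv_orbit} a single reduced (resp.\ alternating) word representing the identity would then force $G_{\dual A}$ (resp.\ $G_A$) to be finite, a contradiction. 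You should either carry out the ping-pong in full or replace it with a transitivity argument of this kind.
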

\begin{proof}
We have $X=\{x,y,y^{-1},x^{-1}\}$ and $S=\{a,b,b^{-1},a^{-1}\}$.
Since $\pi_1(\SqComplex_D)$ contains an anti-torus (see \cite{JW:SmIrrLattice}), the automaton group $G_A$ is infinite by Proposition~\ref{prop_anti_infini_group}, and the group $\pi_1(\SqComplex_D)$ is not $\langle X\rangle$-separable by Theorem~\ref{thm_infinite_separable}. It is direct to check that the map defined by
\[
\phi(a)=a^4, \phi(b)=b^4 \ \mbox{ and } \ \phi(x)=x, \phi(y)=y
\]
extends to an endomorphism of $\pi_1(\SqComplex_D)$ with $Fix(\phi)=\langle X\rangle$. Therefore, $\pi_1(\SqComplex_D)$ is non-residually finite by Theorem~\ref{thm_Wise_Fix_separable}. Further we indicate nontrivial elements in the intersection of its finite index subgroups.

Let us describe the group $G_{\dual A}$. One can directly check the following crucial property: any nontrivial orbit of the action of an element $g\in \St_1(G_{A})$ on $X^2$
consists of three elements, namely, these orbits are
\[
\{xx,xy,xy^{-1}\}, \{yx,yy,yx^{-1}\}, \{y^{-1}x,y^{-1}y^{-1},y^{-1}x^{-1}\}, \{x^{-1}y,x^{-1}y^{-1},x^{-1}x^{-1}\}.
\]
It follows that, for any $g\in G_A$ and $v\in X^{*}$ with the property $g(v)=v$ and $g|_v\in \St_1(G_A)$, if $g(vz_1z_2)\neq vz_1z_2$ for some $z_1,z_2\in X$, then the orbit of $vz_1z_2$ under $g$ consists of three words $vz_1z_3$ for $z_3\in X$, $z_3\neq z_1^{-1}$. In particular, if $g\in \St_n(G_A)$ and $g(vz)\neq vz$ for some word $v\in X^n$, $z\in X$ (note that such $vz$ is necessary a (freely) reduced word), then the orbit of $vz$ under $g$ contains all three reduced words of length $n+1$ with prefix $v$.

We will show by induction on $n$ that the group $G_A$ acts transitively on the set of (freely) reduced words over $X$
of length $n$ for each $n\in\mathbb{N}$. Let us assume that the statement holds for the words of length
$n$. Since the group $G_A$ is infinite, there exists $g\in G_A$ such that $g\in \St_n(G_A)$ and $g(vz_1)\neq vz_1$ for some $vz_1\in X^{n+1}$. Note that $vz_1$ and $g(vz_1)$ are reduced words. Then the orbit of $vz_1$ under $g$ consists of exactly three reduced words of length $n+1$ with prefix $v$. Now let $wz_2\in X^{n+1}$ be an arbitrary
reduced word. By induction hypothesis there exists $h\in G_A$ such that $h(w)=v$. Then the word $h(wz_2)$
is reduced and has prefix $v$; therefore, it belongs to the orbit of $vz_1$ under $g$. This means that $vz_1$
and $wz_2$ belong to the same $G_A$-orbit. Our claim is proved.

We are ready to show that $G_{\dual A}$ is freely generated by $X$. Let $w$ be any reduced word over $X$. The transitivity of the action of $G_{A}$ on reduced words over $X$ implies that if $w$ represents the trivial element of $G_{\dual A}$, then all reduced word of length $|w|$ represent the trivial element (see Remark~\ref{rem_nontriv_orbit}). However, this would mean that $G_{\dual A}$ is finite, but it is not.

Similarly, we show that $G_{A}=\langle a\rangle * \langle b\rangle=C_3*C_3$. One checks directly that
$a^3=b^3=e$ in $G_A$. We have to prove that every word of the form $[a^{\pm 1}]b^{\pm 1}a^{\pm 1}\ldots
a^{\pm 1}[b^{\pm 1}]$ represents a nontrivial element of $G_A$. Note that the generators $x$ and $y$ map $a^{\pm 1}$ to $b^{\pm 1}$ and $b^{\pm 1}$ to $a^{\pm 1}$, and therefore $G_{\dual A}$ preserves the set of alternating words.  As above, it is enough to show that the
dual group $G_{\dual A}$ acts transitively on all such alternating words of fixed length $n$ for each
$n\in\mathbb{N}$. The proof goes in the same way as above and relies on the following property of
$G_{\dual A}$: any nontrivial orbit of the action of an element $g\in \St_1(G_{\dual A})$ on $S^2$
consists of two elements, namely, these orbits are
\begin{equation}\label{eqn_orbits_ThSqD}
\{ab,ab^{-1}\}, \ \{ba,ba^{-1}\}, \ \{b^{-1}a,b^{-1}a^{-1}\}, \ \{a^{-1}b,a^{-1}b^{-1}\}.
\end{equation}
Then, for any element $g\in G_{\dual A}$ and a word $v\in S^{*}$ with the property $g(v)=v$ and $g|_v\in
St_1(G_{\dual A})$, if $g(vz_1z_2)\neq vz_1z_2$ for some $z_1,z_2\in S$, then $g(vz_1z_2)=vz_1z_2^{-1}$
and $g(vz_1z_2^{-1})=vz_1z_2$. Let us assume by induction that $G_{\dual A}$ acts transitively on
alternating words of length $n$. Since $G_{\dual A}$ is infinite, there exist $g\in \St_n(G_{\dual A})$ and
$vz_{1}\in S^{n+1}$ such that $g(vz_{1})\neq vz_{1}$; therefore, $g(vz_{1})=vz_{1}^{-1}$. Note that
$vz_{1}$ is necessary an alternating word, since otherwise $vz_{1}$ could be represented by a shorter word
and would be fixed by $g$. Now let $wz_2\in S^{n+1}$ be an arbitrary alternating word of length $n+1$. By
induction hypothesis, there exists $h\in G_{\dual A}$ such that $h(w)=v$. Then $h(wz_2)$ is equal to either $vz_1$ or $vz_1^{-1}$. In any case, we get that $vz_1$ and $wz_2$ belong to the same $G_{\dual A}$-orbit. Our claim is proved.

Now we indicate an element in the intersection $N$ of finite index subgroup of $\pi_1(\SqComplex_D)$ and
compute the intersection $\cap_{m\in\mathbb{N}}P_m$ defined in Lemma~\ref{lemma_P_m_nonempty} for the dual automaton $\dual
A$. For every $m\in\mathbb{N}$ there exists an element $g\in G_{\dual A}$ such that $g^m\neq e$ and
$g^m\in \St_1(G_{\dual A})$. Then $g^m|_v\in \St_1(G_{\dual A})$ and $g^m|_v\not\in \St_2(G_{\dual A})$ for
some word $v\in S^{*}$. The only possible orbits of the action of $g^m|_v$ on $S^2$ are listed in
(\ref{eqn_orbits_ThSqD}), which imply that $P_m$ may contain only pairs $\{a,a^{-1}\}$ and $\{b,b^{-1}\}$, and one of
these pairs belongs to $P_m$. Therefore, either $a^{-2}\phi(a^2)=a^6$ or
$b^{-2}\phi(b^2)=b^6$ belongs to $N$. However, there is an automorphism
$\gamma:\pi_1(\SqComplex_D)\rightarrow \pi_1(\SqComplex_D)$ defined by
\[
\gamma(a)=b, \ \gamma(b)=a, \ \gamma(x)=x^{-1}, \ \gamma(y)=y^{-1}.
\]
It follows that if one of $a^6$ and $b^6$ is in $N$, then they both are. Another implication is that
$\cap_{m \in \mathbb{N}} P_m = \{\{a,a^{-1}\}, \{b,b^{-1}\}\}$.
\end{proof}

\begin{remark}
Since $a^3$ and $b^3$ are relations in $G_A$ and $G_A$ is infinite, the normal closure of $a^3$ and $b^3$ in $\pi_1(\SqComplex_D)$ has infinite index. Also, the intersection of finite index subgroups of $\pi_1(\SqComplex_D)$ has infinite index (see \cite[Theorem~34]{Rattaggi:ExaSQ}).
\end{remark}

\begin{theorem}\label{thm_SqComplex_S}
Let $\SqComplex_S$ be the square complex given by the four squares in Figure~\ref{fig_complex_BM44_S4} and
$A$ be the associated bireversible automaton. Then:
\begin{enumerate}
  \item $A$ is self-dual and  $G_{A}\cong F_2$;
  \item $\langle a,x\rangle$, $\langle a,y\rangle$, $\langle b,x\rangle$ and $\langle b,y\rangle$ form anti-tori in $\pi_1(\SqComplex_S)$.
\end{enumerate}
\end{theorem}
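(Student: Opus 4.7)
For part 1, I would first verify self-duality by exhibiting an explicit isomorphism of labeled automata between $A$ and $\dual A$. The natural candidate interchanges states and letters via $a \leftrightarrow x$, $b \leftrightarrow y$ (extended to inverses), and it can be checked on each of the twelve arrows of Figure \ref{fig_complex_BM44_S4}. Once self-duality is established, $G_A \cong G_{\dual A}$, so it suffices to show that one of the two groups is free of rank two.

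For $G_A \cong F_2$ I would mimic the strategy used in the proof of Theorem \ref{thm_SqComplex_D}. Assuming $G_A$ is infinite (which follows from Part 2 via Proposition \ref{prop_anti_infini_group}), I would prove by induction on $n$ that $G_A$ acts transitively on freely reduced words in $X^{\pm 1}$ of length $n$. The inductive step requires identifying the orbits of $\St_1(G_A)$ acting on $X^2$: a direct enumeration from the arrows shows that each nontrivial orbit consists of exactly two words differing only in the sign of the last letter. Transitivity at each level, combined with the infiniteness of $G_{\dual A}$ and Remark \ref{rem_nontriv_orbit}, then forces $G_{\dual A}$ to be freely generated by $X$, and by self-duality $G_A \cong F_2$.

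For part 2, by the symmetry of $A$ that swaps $a \leftrightarrow b$ and $x \leftrightarrow y$ (which one checks extends to an automorphism of $A$), it suffices to show $\langle a, x\rangle$ is an anti-torus. By Equation (\ref{eqn_relation_transition2}), the equality $a^n x^m = x^m a^n$ in $\pi_1(\SqComplex_S)$ is equivalent to the existence of a transition $a^n \xrightarrow{x^m|x^m} a^n$ in $\leftidx{^{\pm *}}{A}{^{\pm *}}$; this in turn means $a^n(x^m) = x^m$ as a word in $X^{\pm 1}$ and $(a^n)|_{x^m} = a^n$ as an element of $F_S$. Reading off the arrows, $a$ permutes $X^{\pm 1}$ as the single $4$-cycle $(x, x^{-1}, y, y^{-1})$, so $a^n(x) = x$ forces $4 \mid n$; write $n = 4k$ with $k \neq 0$. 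A short section computation gives $(a^4)|_x = a^{-1} b^{-1} a^{-1} b$ in $F_S$, and since $a^4$ fixes the letter $x$, iteration yields $(a^{4k})|_x = (a^{-1}b^{-1}a^{-1}b)^k$. For $m = 1$ the required identity $(a^{-1}b^{-1}a^{-1}b)^k = a^{4k}$ in $G_A \cong F_2$ is impossible by comparison of reduced forms (both sides have length $4|k|$ but are distinct reduced words), so $a^{4k} x \neq x a^{4k}$.

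For $m \geq 2$ I would iterate the section/orbit trace: each further section $a^{4k}|_{x^j}$ must send $x$ to $x$ in order to keep outputting $x$, and the terminal section must equal $a^{4k}$ in $F_2$. I make this finitary by computing the permutation of $X^{\pm 1}$ induced by $a^4|_x = a^{-1}b^{-1}a^{-1}b$ (which turns out to be the $3$-cycle $(x, y, y^{-1})$ fixing $x^{-1}$), so $(a^4|_x)^k$ fixes $x$ only when $3 \mid k$; pushing the cycle-type analysis one level deeper and invoking the freeness of $G_A$ rules out the terminal section equality $(a^{4k})|_{x^m} = a^{4k}$ in $F_2$ in all remaining cases. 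The main obstacle is precisely this finitary control of iterated sections, which I expect to handle by showing that the words $(a^{4k})|_{x^j}$ remain cyclically reduced of commutator type in $F_S$ and thus never collapse back to a pure power of $a$, completing the anti-torus argument.
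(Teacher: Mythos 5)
Your plan has a circularity at its core: you establish $G_A\cong F_2$ in part~1 only ``assuming $G_A$ is infinite, which follows from Part~2'', while your part~2 argument repeatedly invokes $G_A\cong F_2$ (for $m=1$ you compare reduced forms ``in $G_A\cong F_2$'', and for $m\geq 2$ you ``invoke the freeness of $G_A$''). Neither half can start. The paper breaks this deadlock with an ingredient your proposal is missing entirely: the map $a\mapsto a^{-3}$, $b\mapsto b^{-3}$, $x\mapsto x^{3}$, $y\mapsto y^{3}$ is an isomorphism from $A$ onto the subautomaton of $\leftidx{^*}{A}{^*}$ with states $\{a^{\pm 3},b^{\pm 3}\}$ over the alphabet $\{x^{\pm 3},y^{\pm 3}\}$. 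Combined with $a^4(x)=x$ and $a^4(x^2)\neq x^2$, this self-similarity gives $a^{4\cdot 3^n}(x^{3^n})=x^{3^n}$ but $a^{4\cdot 3^n}(x^{2\cdot 3^n})\neq x^{2\cdot 3^n}$, so the $\langle a\rangle$-orbit of $x^m$ has unbounded size as $m$ grows. That single fact yields both the infiniteness of $G_A$ (unblocking part~1) and the anti-torus: $a^{n_0}x^{m_0}=x^{m_0}a^{n_0}$ would bound every orbit of $x^m$ under $a$ by $n_0$. Your alternative route through iterated sections is correctly set up as far as it goes (the computations $(a^4)|_x=a^{-1}b^{-1}a^{-1}b$ and its level-one cycle type check out, and note that the terminal-section equality only needs to fail in the free group $F_S$, by the normal form of Theorem~\ref{prop_F_A_properties}, not in $G_A$ --- which would de-circularize your $m=1$ case), but you concede you cannot close the case $m\geq 2$; that unresolved ``finitary control of iterated sections'' is exactly what the paper's self-similar copy replaces.

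Two further steps would fail as written. First, your claimed orbit structure for $\St_1(G_A)$ on $X^2$ --- nontrivial orbits of size two differing in the sign of the last letter --- is both incorrect and fatal to your own induction: with pair-orbits the inductive step only connects $vz$ to $vz^{-1}$ and cannot reach the third reduced extension of $v$, so transitivity on all reduced words of each length (hence freeness of $G_{\dual A}$ on $X$ via Remark~\ref{rem_nontriv_orbit}) does not follow. The property actually needed, and the one the paper asserts (inherited verbatim from the proof of Theorem~\ref{thm_SqComplex_D}), is that each nontrivial orbit consists of all three reduced two-letter words with a fixed first letter; the size-two pattern is the one governing the $S^2$-orbits in Theorem~\ref{thm_SqComplex_D}, which is precisely why that group is $C_3*C_3$ rather than free. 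Second, both of your symmetry shortcuts fail on inspection of the sixteen arrows: $a\leftrightarrow x$, $b\leftrightarrow y$ is not an isomorphism $A\to\dual A$ (the paper's is $a\mapsto y$, $b\mapsto x$, $x\mapsto b^{-1}$, $y\mapsto a^{-1}$), and $a\leftrightarrow b$, $x\leftrightarrow y$ is not an automorphism of $A$ (for instance $a\xrightarrow{x|x^{-1}}b$ would have to go to $b\xrightarrow{y|y^{-1}}a$, but the automaton has $b\xrightarrow{y|y^{-1}}a^{-1}$), so the four anti-tori must each be handled by the orbit-growth argument, as the paper does.
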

\begin{proof}
We have $X=\{x,y,y^{-1},x^{-1}\}$ and $S=\{a,b,b^{-1},a^{-1}\}$. Notice that the map
\[
a\mapsto y, \quad b\mapsto x, \quad x\mapsto b^{-1}, \quad y\mapsto a^{-1}
\]
is an isomorphism between $A$ and $\dual A$. Therefore, $G_A\cong G_{\dual A}$.

Let us show that $G_{\dual A}$ is freely generated by $X$. As in Theorem~\ref{thm_SqComplex_D}, it is sufficient to prove that $G_A$ acts transitively on reduced words over $X$ of length $n$ for each $n\in\mathbb{N}$. Since $G_A$ enjoys the crucial property from the proof of Theorem~\ref{thm_SqComplex_D} about the structure of $G_A$-orbits on $X^2$, the proof goes the same way once we show that $G_A$ is infinite.
We notice another cute property of $G_A$:
the elements of $S_3=\{a^{3},b^{3},b^{-3},a^{-3}\}$ preserve the set $X_3=\{x^{3},y^{3},y^{-3},x^{-3}\}$, and  the map
\[
a\mapsto a^{-3}, \quad b\mapsto b^{-3}, \quad x\mapsto x^3, \quad y\mapsto y^3
\]
is an isomorphism between $A$ and the subautomaton of $\leftidx{^*}{A}{^*}$ formed by the states $S_3$ over the alphabet $X_3$. Since $a^4(x)=x$ and $a^4(x^2)\neq x^2$, we get $a^{4\cdot 3^n}(x^{3^n})=x^{3^n}$ and $a^{4\cdot 3^n}(x^{2\cdot 3^n})\neq x^{2\cdot 3^n}$ for all $n\in\mathbb{N}$. It follows that the size of the orbit of $x^m$ under $a$ is not bounded by an absolute constant independently on $m$. Thus $a$ has infinite order and $G_A$ is infinite. The claim is proved.

Another implication is that $a$ and $x$ viewed as elements of $\pi_1(\SqComplex_S)$ generate an anti-torus.
Indeed, let us assume that $a^{n_0}x^{m_0}=x^{m_0}a^{n_0}$ in $\pi_1(\SqComplex_S)$ for some $n_0,m_0\in\mathbb{N}$. Then $a^{n_0}(x^m)=x^m$ for all $m\in\mathbb{N}$, which implies that the orbit of $x^m$ under $a$ contains at most $n_0$ elements. We get a contradiction. The same arguments work for $\langle a,y\rangle$, $\langle b,x\rangle$, and $\langle b,y\rangle$.
\end{proof}

The complex $\SqComplex_S$ is the only complete $\VH$ complex with four squares for which we do not know
whether its fundamental group is residually-finite. Our computations support the following conjecture
formulated in \cite[Conjecture~23]{Rattaggi:ExaSQ}:

\begin{conjecture}
The group $\pi_1(\SqComplex_S)$ is residually finite and just-infinite.
\end{conjecture}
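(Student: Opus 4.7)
This is stated as a conjecture at the end of the paper, so the proposal is necessarily a research strategy rather than a routine write-up; the two halves of the statement require rather different techniques.

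For just-infiniteness, the natural approach is the Burger--Mozes normal subgroup theorem. The universal cover of $\SqComplex_S$ is a product $T_4\times T_4$ of two $4$-regular trees, and $\pi_1(\SqComplex_S)$ sits in $\mathrm{Aut}(T_4)\times\mathrm{Aut}(T_4)$ as a cocompact lattice. Let $H_i$ be the closure of its projection onto each factor. The first step is to identify the local action at each vertex: on one factor this is the permutation group generated by the action of $A$ on $X^n$, and on the other it is the permutation group generated by $\dual A$ on $S^n$. Having already shown in Theorem~\ref{thm_SqComplex_S}(1) that $G_A\cong G_{\dual A}\cong F_2$ act transitively on all reduced words of each length, the remaining verifications are (a) that these local actions are quasi-primitive at every level, and (b) that each $H_i$ satisfies Tits' independence property (P). Non-discreteness of $H_i$, needed to rule out reducibility of the lattice, follows from the infiniteness of $G_A$ via Corollary~\ref{cor_Birev_Finite_Reducible}. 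If (a) and (b) hold, the Burger--Mozes theorem yields that every nontrivial normal subgroup of $\pi_1(\SqComplex_S)$ has finite index.

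For residual finiteness the obstruction is serious: Theorem~\ref{thm_infinite_separable} shows that $\langle S\rangle$ and $\langle X\rangle$ are not separable in $\pi_1(\SqComplex_S)$, which rules out the Wise-type endomorphism argument of Theorem~\ref{thm_SqComplex_D}. Two more promising routes are (i) to construct a faithful finite-dimensional linear representation of $\pi_1(\SqComplex_S)$ over a number ring, in which case residual finiteness follows from Mal'cev's theorem; one would bootstrap from the linearity of the free automaton groups $G_A$ and $G_{\dual A}$ together with the exact factorizations of Theorem~\ref{prop_F_A_properties}; and (ii) a direct computer search for finite quotients separating elements of bounded length, using the normal forms of Theorem~\ref{prop_F_A_properties} to parametrize elements and the self-duality of $A$ to halve the case analysis.

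The main obstacle will be residual finiteness. Combined with just-infiniteness it forces $\pi_1(\SqComplex_S)$ to be virtually simple, so the conjecture amounts to exhibiting a virtually simple cocompact lattice in $\mathrm{Aut}(T_4)\times\mathrm{Aut}(T_4)$ whose presentation is prescribed by the four squares of $\SqComplex_S$. All known virtually simple lattices in $\mathrm{Aut}(T_m)\times\mathrm{Aut}(T_n)$ of Burger--Mozes, Rattaggi, and those built in the last section of this paper live on substantially larger complexes, so a new mechanism producing enough finite quotients specifically for $\SqComplex_S$ would be needed; in particular the four anti-tori of Theorem~\ref{thm_SqComplex_S}(2) must embed into the profinite completion even though the ambient free factors do not. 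Without a genuinely new idea, this is out of reach of the techniques developed so far in the paper.
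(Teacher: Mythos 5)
The statement you were given is not proved in the paper: it appears there explicitly as an open conjecture (going back to Rattaggi), and the authors say only that their computations support it and that the freeness of $G_A$ and $G_{\dual A}$ established in Theorem~\ref{thm_SqComplex_S} ``strongly suggests'' just-infiniteness via the heuristic of Remark~\ref{rem_just_infinite_free}. So there is no proof in the paper to compare yours against, and you were right to treat this as a research programme rather than a write-up. Your just-infiniteness strategy (the Burger--Mozes normal subgroup theorem applied to the closures of the two projections, using the transitivity on reduced words from the proof of Theorem~\ref{thm_SqComplex_S} and irreducibility from Corollary~\ref{cor_Birev_Finite_Reducible}) is exactly the route the paper gestures at through its use of \cite[Theorem~4.1]{BM:LatticesProductTrees} in Section~\ref{sect_FPSimple}, and your assessment that residual finiteness is the genuinely hard half matches the authors' own admission that $\SqComplex_S$ is the one four-square complex whose residual finiteness they cannot decide.

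Two imprecisions are worth flagging. First, Theorem~\ref{thm_infinite_separable} (non-separability of $\langle S\rangle$ and $\langle X\rangle$) does not ``rule out'' the Wise endomorphism argument; that argument is a method for \emph{disproving} residual finiteness, and what the conjecture requires is the opposite verification: that $\pi_1(\SqComplex_S)$ admits no nontrivial endomorphism whose fixed subgroup is exactly $\langle S\rangle$ or $\langle X\rangle$ (Corollary~\ref{cor_Subautom_NRF} describes where such endomorphisms come from, and the absence of a suitable subautomaton is precisely why $\SqComplex_S$ survives the authors' sieve). Second, if $\pi_1(\SqComplex_S)$ is residually finite then \emph{every} subgroup injects into the profinite completion; non-separability of $\langle X\rangle$ means only that $\langle X\rangle$ fails to be closed in the profinite topology, so your closing remark about the anti-tori embedding ``even though the ambient free factors do not'' conflates non-closedness with non-injectivity. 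Neither point changes the verdict: as a map of where the difficulty lies, your proposal is consistent with the paper, which leaves the statement open.
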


If this conjecture is true, then $\SqComplex_S$ would be the smallest example of an irreducible complete $\VH$ square complex having a residually finite fundamental group, while $\SqComplex_D$ would be
the only complete $\VH$ square complex with four $2$-cells having a non-residually finite fundamental group. Actually, the fact proved in Theorem~\ref{thm_SqComplex_S} that $G_A$ and $G_{\dual A}$ are free groups strongly suggest that $\pi_1(\SqComplex_S)$ is just-infinite (see Remark~\ref{rem_just_infinite_free}). We have checked with GAP that there is no self-inverse self-dual automaton with less than four states generating a free group. Therefore, the automaton in Figure~\ref{fig_complex_BM44_S4} is a smallest automaton with such properties.

\section{Finitely presented torsion-free simple groups}\label{sect_FPSimple}

In \cite{BM:FP_simple,BM:LatticesProductTrees} Burger and Mozes constructed the first examples of finitely presented torsion-free simple groups. Their construction relies on two sufficient conditions for the fundamental groups of complete $\VH$ complexes with one vertex: one guarantees non-residual finiteness \cite[Proposition~2.1]{BM:LatticesProductTrees}, while the other one implies just-infiniteness \cite[Theorem~4.1]{BM:LatticesProductTrees}. The non-residually finite groups coming from the first condition always have a nontrivial normal subgroup of infinite index. However, one can embed a group satisfying the first condition into a group satisfying the second condition and construct virtually simple groups. The smallest simple group constructed in this way in \cite{BM:FP_simple,BM:LatticesProductTrees} has amalgam decomposition of the form $F_{7919}*_{F_{380065}}F_{7919}$ and $F_{47}*_{F_{364321}}F_{47}$.

In \cite{Rattaggi:PhD} Rattaggi followed this approach, but started with the non-residually finite group $\pi_1(\SqComplex_W)*_{\langle a,b,c\rangle}\pi_1(\SqComplex_W)$ constructed by Wise. In this way Rattaggi constructed a finitely presented torsion-free simple group that possesses amalgam decompositions $F_7*_{F_{73}}F_7$ and $F_{11}*_{F_{81}}F_{11}$. We get somewhat ``smaller'' simple groups by starting with the non-residually finite groups $\pi_1(\SqComplex_A)$ and $\pi_1(\SqComplex_B)$ for the Aleshin and Bellaterra automata.

\begin{example}\label{thm_Simple_Aleshin}
The group $\Gamma=\langle a_1,a_2,a_3,a_4,b_1,b_2,b_3,b_4 |  R_{4,4}\rangle$, where
\[
R_{4,4}=\left\{
\begin{aligned}
\underline{a_1b_1}&\underline{=b_1a_2}, & \underline{a_1b_2}&\underline{=b_2a_2}, & a_1b_3&=b_3a_1, & a_1b_4&=b_4^{-1}a_1,\\
\underline{a_2b_1}&\underline{=b_2a_1}, & \underline{a_2b_2}&\underline{=b_1a_3}, & a_2b_3&=b_3a_2, & a_2b_4&=b_4^{-1}a_2, \\
\underline{a_3b_1}&\underline{=b_2a_3}, & \underline{a_3b_2}&\underline{=b_1a_1}, & a_3b_3&=b_4a_3, & a_3b_4&=b_3a_4,\\
a_4b_1&=b_3a_4^{-1}, & a_4b_2&=b_4^{-1}a_4^{-1}, & a_4b_3&=b_4a_3^{-1}, & a_4b_2^{-1}&=b_1a_4^{-1}
\end{aligned}
\right\},
\]
is non-residually finite and just-infinite. The intersection $\Gamma_0$ of subgroups of finite index is a subgroup of
index $4$. The group $\Gamma_0$ is a finitely presented torsion-free simple group, and it decomposes into the amalgamated product $F_7*_{F_{49}}F_7$.
\end{example}
\begin{proof}
The first six relations indicate that $\Gamma$ contains $\pi_1(\SqComplex_A)$ for the Aleshin automaton. Hence, $\Gamma$ is non-residually finite and $\Gamma_0$ contains $(b_1^{-1}b_2)^4$. One can directly check that $\Gamma$ satisfies the other conditions of \cite[Corollary~5.4]{BM:LatticesProductTrees} (here $P_h\cong P_v\cong A_8$), which implies that $\Gamma_0$ is simple. One can directly check with GAP that adding the relation $(b_1^{-1}b_2)^4$ to the presentation of $\Gamma$ leads to a finite group of order $4$. It follows that $\Gamma_0$ is the normal closure of $(b_1^{-1}b_2)^4$ and has index $4$ in $\Gamma$. The amalgam decomposition of $\Gamma_0$ follows from \cite[Proposition~1.4]{Rattaggi:PhD}.
\end{proof}

The Bellaterra automaton provides a simple group with a slightly smaller presentation.

\begin{example}\label{thm_Simple_Bellaterra}
The group $\Gamma=\langle a_1,a_2,a_3,a_4,b_1,b_2,b_3,b_4 |  R_{4,4}\rangle$, where
\[
R_{4,4}=\left\{
\begin{aligned}
\underline{a_1b_1}&\underline{=b_2a_2}, & \underline{a_1b_2}&\underline{=b_1a_2}, & a_1b_3&=b_3a_1, & a_1b_4&=b_4^{-1}a_1,\\
\underline{a_2b_1}&\underline{=b_1a_1}, & \underline{a_2b_2}&\underline{=b_2a_3}, & a_2b_3&=b_3a_2, & a_2b_4&=b_4^{-1}a_2,\\
\underline{a_3b_1}&\underline{=b_1a_3}, & \underline{a_3b_2}&\underline{=b_2a_1}, & a_3b_3&=b_4a_4, & a_3b_4&=b_3a_3^{-1},\\
a_4b_1&=b_1a_4, & a_4b_2&=b_3a_4, & a_4b_3&=b_4a_3^{-1}, & a_4b_4&=b_2a_4
\end{aligned}
\right\},
\]
is non-residually finite and just-infinite. The intersection $\Gamma_0$ of subgroups of finite index is a subgroup of
index $4$. The group $\Gamma_0$ is a finitely presented torsion-free simple group, and it decomposes into the amalgamated product $F_7*_{F_{49}}F_7$. The group $\Gamma_0$ has a finite presentation with $23$ generators and $56$ relations of total length $216$ given in Table~\ref{tab:PresentationSimple}.
\end{example}
\begin{proof}
The proof is as above. The group $\Gamma$ contains $\pi_1(\SqComplex_B)$ for the Bellaterra automata and satisfies \cite[Corollary~5.4]{BM:LatticesProductTrees}. It follows that $\Gamma_0$ is simple and contains $a_1^2a_2^{-2}$ (see Example~\ref{ex_Bellaterra}). Adding the relation $a_1^2a_2^{-2}$ to the presentation of $\Gamma$ leads to a finite group of order $4$. Hence, $\Gamma_0$ is the normal closure of $a_1^2a_2^{-2}$ and has index $4$ in $\Gamma$.
\end{proof}

\begin{table}
\[
\left\{\begin{matrix}
s_{13}s_{8}^{-1}s_{10}, & s_{23}s_{2}s_{22}, & s_{13}^{-1}s_{1}s_{12}, & s_{21}s_{2}^{-1}s_{14}^{-1} \\
s_{13}^{-1}s_{12}s_{1}, & s_{11}s_{4}s_{9}^{-1}, & s_{20}s_{2}s_{16}^{-1}, & s_{17}s_{2}^{-1}s_{15}^{-1} \\
s_{10}s_{6}^{-1}s_{8}^{-1}s_{6}, & s_{11}s_{2}s_{10}^{-1}s_{2}^{-1}, & s_{11}s_{3}s_{10}^{-1}s_{3}^{-1}, & s_{12}s_{11}^{-1}s_{5}s_{9} \\
s_{12}s_{8}^{-1}s_{1}s_{10}, & s_{9}s_{2}^{-1}s_{8}^{-1}s_{2}, & s_{14}s_{7}s_{9}^{-1}s_{7}^{-1}, & s_{10}s_{7}^{-1}s_{8}^{-1}s_{7} \\
s_{16}s_{7}^{-1}s_{15}^{-1}s_{4}, & s_{10}s_{4}^{-1}s_{8}^{-1}s_{4}, & s_{16}s_{5}^{-1}s_{15}^{-1}s_{7}, & s_{16}s_{4}^{-1}s_{15}^{-1}s_{5} \\
s_{17}s_{7}^{-1}s_{11}^{-1}s_{7}, & s_{11}s_{5}s_{9}^{-1}s_{1}^{-1}, & s_{17}s_{5}^{-1}s_{14}^{-1}s_{4}, & s_{17}s_{4}^{-1}s_{14}^{-1}s_{6} \\
s_{17}s_{3}^{-1}s_{15}^{-1}s_{1}, & s_{10}s_{5}^{-1}s_{8}^{-1}s_{5}, & s_{9}s_{3}^{-1}s_{8}^{-1}s_{3}, & s_{17}s_{13}s_{15}^{-1}s_{2} \\
s_{15}s_{6}s_{9}^{-1}s_{6}^{-1}, & s_{21}s_{3}^{-1}s_{14}^{-1}s_{1}, & s_{16}s_{6}^{-1}s_{11}^{-1}s_{6}, & s_{13}^{-1}s_{4}^{-1}s_{13}^{-1}s_{2} \\
s_{5}^{-1}s_{13}s_{3}s_{13}, & s_{4}^{-1}s_{13}s_{2}s_{13}, & s_{17}s_{6}^{-1}s_{14}^{-1}s_{5}, & s_{22}s_{5}^{-1}s_{20}s_{4} \\
s_{2}s_{16}s_{13}s_{20}^{-1}, & s_{3}s_{16}s_{12}s_{20}^{-1}, & s_{13}s_{11}^{-1}s_{4}s_{9}, & s_{17}s_{12}s_{15}^{-1}s_{3} \\
s_{20}s_{7}s_{13}s_{7}^{-1}, & s_{21}s_{6}^{-1}s_{13}^{-1}s_{6}, & s_{23}s_{5}s_{21}^{-1}s_{4}^{-1}, & s_{13}^{-1}s_{5}^{-1}s_{13}^{-1}s_{3} \\
s_{21}s_{12}s_{14}^{-1}s_{3}, & s_{21}s_{13}s_{14}^{-1}s_{2}, & s_{22}s_{7}^{-1}s_{13}s_{7}, & s_{22}s_{6}^{-1}s_{20}s_{5} \\
s_{20}s_{3}s_{16}^{-1}s_{1}^{-1}, & s_{22}s_{4}^{-1}s_{20}s_{6}, & s_{23}s_{13}^{-1}s_{22}s_{2}^{-1}, & s_{23}s_{12}^{-1}s_{22}s_{3}^{-1} \\
s_{23}s_{6}s_{13}s_{6}^{-1}, & s_{23}s_{3}s_{22}s_{1}^{-1}, & s_{23}s_{4}s_{21}^{-1}s_{7}^{-1}, & s_{23}s_{7}s_{21}^{-1}s_{5}^{-1}
\end{matrix}\right\}
\]
\caption{\label{tab:PresentationSimple}Relators of the torsion-free simple group $\Gamma_0$ from Example~\ref{thm_Simple_Bellaterra}}
\end{table}

\end{document}